\newcommand{\clC}[1]{\mathbf{C}(#1)}
\newcommand{\clT}{\mathbf{T}}
\newcommand{\clB}{\mathbf{B}}
\newcommand{\clG}[1]{\mathbf{G}(#1)}
\newcommand{\clH}[1]{\mathbf{H}(#1)}
\newcommand{\bm}{\mathbf{m}}
\newcommand{\bn}{\mathbf{n}}
\def\CA{{\mathcal A}}
\def\CB{{\mathcal B}}
\def\CC{{\mathcal C}}
\def\CL{{\mathcal L}}
\def\sC{{\mathsf C}}
\def\sM{{\mathsf M}}
\newcommand{\TF}{\texttt{F}}
\newcommand{\BA}{\mathbb{A}}
\newcommand{\BF}{\mathbb{F}}
\newcommand{\BZ}{{\mathbb Z}}
\newcommand{\fm}{{\mathfrak m}}
\newcommand{\kk}{\mathsf{k}}
\newcommand{\sff}{f}
\newcommand{\sg}{g}
\newcommand{\im}{\operatorname{Im}}
\newcommand{\id}[1]{\operatorname{id}^{#1}}
\newcommand{\Poi}[2]{\operatorname{P}^{#1}_{#2}}
\newcommand{\sS}{\scriptstyle}
\newcommand{\ges}{\operatorname{\sS\geqslant}}
\newcommand{\les}{\operatorname{\sS\leqslant}}
\newcommand{\rank}{\operatorname{rank}}%%%%%%%%%%%%%%%%%%%%%%%%%%%%%%%%%%%%%%%%%%%%%%%%%%%%%%%%%%%%%%%%%%%%%%%%%%%%%%%%
\newcommand{\HH}[2]{\operatorname{H}_{#1}(#2)}
\newcommand{\Ker}{\operatorname{Ker}}
\newcommand{\Span}{\operatorname{Span}}
\newcommand{\Imm}{\operatorname{Im}}
\newcommand{\xra}{\xrightarrow}
\newcommand{\Cone}{\operatorname{Cone}}
\theoremstyle{plain}
\newtheorem{theorem}{Theorem}[section]
\newtheorem{corollary}[theorem]{Corollary}
\newtheorem{lemma}[theorem]{Lemma}
\newtheorem{proposition}[theorem]{Proposition}
\theoremstyle{definition}
\newtheorem{remark}[theorem]{Remark}
\newtheorem{convention}[theorem]{Convention}
\newtheorem{definition}[theorem]{Definition}
\newtheorem*{goal}{Goal}
\numberwithin{equation}{theorem}
\theoremstyle{remark}
\begin{document}

\title[On a minimal free resolution over a local ring of codepth 3 of class $\clT$]{On a minimal free resolution of the residue field \\ over a local ring of codepth 3 of class $\clT$}

\author[V.~C.~Nguyen]{Van C.~Nguyen}
\address{Department of Mathematics, United States Naval Academy, Annapolis, MD 21402, U.S.A.}
\email{vnguyen@usna.edu}
\author[O.~Veliche]{Oana Veliche}
\address{Department of Mathematics, Northeastern University, Boston, MA 02115, U.S.A.}
\email{o.veliche@northeastern.edu}

\date{\today} 
\keywords{minimal free resolution, Koszul homology, class T, complete intersection, mapping cone}

\subjclass[2020]{13D02, 13D07, 13H10, 18G10, 18G35.}

%%%%%%%%%%%%%%%%%%%%%%%%%%%%%%%%%
%%%%%%%%%%%%%%%%%%%%%%%%%%%%%%%%%
\begin{abstract}
Let $R$ be any noetherian local ring with residue field $\kk$, and $A$ the homology of the Koszul complex on a minimal set of generators of the maximal ideal of $R$. In this paper, we show that a minimal free resolution of $\kk$ over $R$ can be obtained from a graded minimal free resolution of $\kk$ over $A$. More precisely, this is done by the iterated mapping cone construction, introduced by the authors in a previous work, using specific choices of ingredients. As applications, using this general perspective, we exhibit a minimal free resolution of $\kk$ over a complete intersection ring of any codepth, and explicitly construct a minimal free resolution of $\kk$ over a noetherian local ring of codepth 3 of class $\clT$ in terms of Koszul blocks.
\end{abstract}

\maketitle
%\thispagestyle{empt\title{}
%\tableofcontents

%%%%%%%%%%%%%%%%%%%%%%%%%%%%%%%
%%%%%%%%%%%%%%%%%%%%%%%%%%%%%%%
\section{Introduction}

Let $(R,\fm,\kk)$ be a noetherian local ring of codepth $c$ with maximal ideal $\fm$ and residue field $\kk$. Let $K$ be the Koszul complex on the minimal set of generators of $\fm$  and $A=\HH{}{K}$ its Koszul homology which has a graded-commutative $\kk$-algebra structure.

In \cite{G}, Golod proved that the Betti numbers of $\kk$ over $R$ have the fastest possible growth if and only if the Koszul homology algebra $A$ has trivial products and trivial Massey operations; such a ring is now called a Golod ring. Generalizing Golod's result, in \cite[Corollary 5.10]{Av74} Avramov proved that a minimal free resolution of $\kk$ over $R$ depends only on the ranks of the graded components $A_i$ and the multiplicative structure of $A$ with Massey products. In \cite{NV}, using the Koszul complex $K$ as building blocks, the authors explicitly constructed the minimal free resolution of $\kk$ over $R$ up to homological degree 5. As expected by the results in \cite{Av74}, the multiplicative structure of $A$ and the Massey products occurring in degree 4 play an important role in this construction. However, constructing the resolution beyond degree 5 requires a good understanding of higher-order Massey products, which remains elusive. On the other hand, using an iterated mapping cone construction in \cite{NV2}, we successfully described the entire free resolution of $\kk$ over a ring of class $\clC{c}$, i.e., complete intersection rings of codepth $c$, with $K$ as building blocks. This resolution is well-known, e.g. by Tate \cite[Theorem 4]{T} and studied by others. 

Since Massey products do not exist for rings with codepth $c \les 3$, this motivates the following. 

\begin{goal} 
For noetherian local rings $R$ of codepth at most 3, explicitly describe the minimal free resolution of $\kk$ as an $R$-module in terms of the Koszul complex $K$ and the multiplicative structure of Koszul homology $A$. 
\end{goal}

If $c=0$, then $R$ is regular and the resolution of $\kk$ is the Koszul complex $K$. If $c=1$, then $R$ is a hypersurface, that is of class $\clC{1}$, the resolution of $\kk$ is given in \cite{NV2}. If $c=2$, then $R$ is either complete intersection $\clC{2}$ or Golod, and the resolution for each case is  respectively described in \cite{NV2} and \cite{G}.

For $c=3$, local rings of codepth 3 are completely classified in terms of the multiplicative structure of $A$ as classes: $\clC{3}$, $\clT$, $\clB$, $\clG{r}$ with $r\ges 2$, and $\clH{p,q}$ with $p,q\ges 0$. This is due to the classification work of Weyman \cite{W} and Avramov-Kustin-Miller \cite{AKM}. In this case, the Poincar\'e series of $\kk$ over $R$ are explicitly given in terms of the multiplicative invariants of $A$; see \cite[Theorem 2.1]{Av2}, \cite[Corollary 4.6]{NV}. Recently in \cite{BCLP}, Briggs-Cameron-Letz-Pollitz used a general theory of twisted tensor products in the framework of universal resolutions (introduced by Priddy over Koszul algebras) to describe the minimal free resolution of $\kk$ over $R$. In their work, for a strictly Koszul homomorphism $\varphi: Q \to R$, one can obtain free resolutions over $R$ starting from free resolutions over $Q$ \cite[Theorem 6.5]{BCLP}. For local rings of codepth 3 or less, in terms of the classification types described in \cite{AKM}, every type except class $\clT$ is Cohen Koszul \cite[Theorem 3.10]{BCLP}. Consequently, using the twisted tensor products and $A_\infty$-structure, they gave a semifree resolution of $\kk$ over $R$ for these Cohen Koszul rings, except for class $\clT$ \cite[Theorem 7.8]{BCLP}. Observe that in \cite{BCLP}, the multiplicative structure of $A$ was not involved in their description of the resolution. 

Working toward the aforementioned goal, in this paper, we discuss how the mapping cone construction in general can yield a minimal free resolution of $\kk$ over any noetherian local ring $R$ of \emph{any codepth}. In particular, our main \Cref{graded res} provides a way to construct a minimal free resolution of $\kk$ as an $R$-module starting from a graded minimal free resolution of $\kk$ as a module over the graded-commutative ring $A$. In \Cref{ci}, we illustrate this result by reformulating the known resolution in \cite{NV2} for complete intersection rings $\clC{c}$ in this context. 

Among all the five classes of codepth 3 rings, the class $\clT$ has the closest multiplicative structure to a complete intersection ring. Therefore, we focus on class $\clT$ in \Cref{class T} as a new application of \Cref{graded res}. We explicitly construct the sequences of complexes and chain maps that satisfy the hypotheses of the main \Cref{graded res}, allowing us to form a graded resolution of $\kk$ over $A$; see \Cref{res k over A class T}. Through our approach, we are able to fill the missing case from \cite{BCLP} with a complete description of the resolution of $\kk$ over class $\clT$; see \Cref{res k over R class T}. To illustrate our results, in \Cref{sec:example} we discuss in detail an example of class $\clT$ that is a codepth 3 almost complete intersection ring and explain how to concretely describe the resolution and its differential maps using a finite set of data. 

We conclude the introduction with some details on how we construct a resolution of $\kk$ as a graded $A$-module for class $\clT$ in comparison with that for class $\clC{c}$. Indeed, our knowledge about complete intersection rings in \cite{NV2} proves to be essential in this work. First, from the classification results, the Koszul homology $A$ over class $\clT$ is a trivial extension of a $\kk$-subalgebra $B$ by a $B$-module $C$, where $B,C$ are described in \Cref{subsec:Koszul}. Here, $B$ looks like a truncated complete intersection. The essential ingredients to construct the differential maps of the graded resolution of $\kk$ over $A$ are the maps $\{\beta_k\}_{k \ges 0}$, $\{\beta_k'\}_{k \ges 0}$ and $\gamma_1, \gamma_2, \gamma_3$. The maps $\{\beta_k\}_{k\ges 0}$ are precisely the ones used in constructing the resolution in the case $\clC{3}$ (cf.~\Cref{def beta} vs. \Cref{def beta'}), whereas the maps $\{\beta'_{k}\}_{k\ges 0}$ behave like right inverse maps of $\{\beta_k\}_{k\ges 0}$, and the maps $\gamma_i$ relate to the component $C$. Second, our construction in \Cref{Candphi} is associated to a directed tree (cf.~\Cref{fig:tree}) which gives rise to a graded resolution of $\kk$ over $A$ for class $\clT$. In the $\clC{c}$ case, this tree is just a linear graph and it gives a linear graded minimal free resolution of $\kk$ over $A$, see \eqref{res k over A ci}. Finally, we observe that the graded resolution of $\kk$ over $A$ grows from the acyclic complex $\BA$ of $A$-modules in \eqref{BA}, which behaves like a \emph{seed} \eqref{seed} that generates the tree; see \Cref{fig:tree}.

%%%%%%%%%%%%%%%%%%%%%%%%%%%%%%%
%%%%%%%%%%%%%%%%%%%%%%%%%%%%%%%
\section{The minimal free resolution of the residue field}

In this paper, let $(R,\fm, \kk)$ be a noetherian local ring with maximal ideal $\fm$ and residue field $\kk=R/\fm$. For simplicity, we assume all local rings are noetherian. Let $n$ be the embedding dimension of $R$ and $c$ its codepth. Let $(K,\partial)$ be the Koszul complex on the minimal set of generators of $\fm$ and $A=A_0\oplus A_1\oplus \cdots \oplus A_c$ its Koszul homology. The differential graded structure of $K$ induces a graded-commutative $\kk$-algebra structure on $A$, where we view $A$ as a graded algebra with the grading given by the homological degrees. In general, $A$ is not a standard graded algebra. For example, in the codepth 3 case, $A$ is a standard graded algebra only when $R$ is a complete  intersection ring; see e.g. \cite{Av2}. As in \cite{NV, NV2}, we set 
\[ a_i\coloneqq \rank_{\kk} (A_i),\quad\text{for all } 0\les i\les c.\] 

The main result of this section is \Cref{graded res}. It provides a way to construct a minimal free resolution of the residue field $\kk$ over the local ring $R$ starting from a graded minimal free resolution of $\kk$ over the graded-commutative $\kk$-algebra $A$. We first set the following convention to be used throughout the paper. 

\begin{convention}
We denote the following maps by 
\[
\alpha \oplus \beta = \left(\begin{array}{c|c}\alpha&0\\ \hline 0&\beta \end{array}\right) \colon 
\xymatrixcolsep{9pt}
\xymatrixrowsep{6pt}
\xymatrix{
M \ar@{->}[r]^{\alpha} \ar@{}[d]|{\oplus}&N\ar@{}[d]|{\oplus}\\
M' \ar@{->}[r]^{\beta} &N'
}
\qquad \qquad
\begin{pmatrix}\alpha\\ \hline \beta\end{pmatrix} \colon 
\xymatrix{
M \ar@{->}[r]^{\alpha} \ar@{->}[rd]_{\beta} &N \ar@{}[d]|{\oplus} \\ &N'
} 
\qquad \qquad \text{ and } \qquad
(\alpha|\beta) \colon
\xymatrix{
M \ar@{->}[r]^{\alpha} \ar@{}[d]|{\oplus} & N. \\ M' \ar@{->}[ru]_{\beta} & 
}
\]
Likewise, for any integer $k\ges 1$ we denote by $\alpha^{k}\coloneqq \alpha^{k-1}\oplus \alpha: M^{\oplus k} \to N^{\oplus k}$ with $\alpha: M \to N$ appearing on the main diagonal $k$ times, and $0$ everywhere else. 

For any two complexes $(L,\partial^{L})$ and $(L',\partial^{L'})$ of modules over a ring, we denote by $(L\oplus L', \partial^{L}\oplus \partial^{L'})$ the direct sum of complexes. In particular, for any integer $k\ges 1$ we set $L^{k} \coloneqq L \oplus L \oplus \cdots \oplus L$ for a direct sum of $k$ copies of $L$. For any chain map $\lambda: L \to L'$, we denote $\lambda^k: L^k \to (L')^k$ for a direct sum of $k$ copies of $\lambda$ with respective components. 
We use the subscript to denote the homological component of $L$, and superscript with parentheses to label different complexes.

For a complex $L$ where $L_i=0$ for all $i <0$, we follow the convention in \cite{BH} and say $L$ is \emph{exact} if $\HH{i}{L}=0$ for all $i \ges 0$; and $L$ is \emph{acyclic} if $\HH{i}{L}=0$ for all $i > 0$.

For any complex $(L,\partial^{L})$ and any integer $s$, denote by $\Sigma^s L$ the complex with components $(\Sigma^s L)_i=L_{i-s}$ and differential maps $\partial^{\Sigma^s L}_i=(-1)^s \partial^L_{i-s}$ for all $i$. 

Let $L$ be a graded complex over a graded ring $S$ with $L_i = \bigoplus_{j \in \BZ} S^{b_{i,j}}(-j)$ for all $i$ and some non-negative integers $b_{i,j}$. Then, for $a\in \BZ$, $L(-a)$ denotes the graded complex with $L(-a)_i=\bigoplus_{s \in \BZ} S^{b_{i,j}}(-j-a)$.
\end{convention}

\medskip
The following is our main result, which shows that resolving $\kk$ over any local ring $R$ could amount to resolving $\kk$ over the Koszul homology $A$. 

\begin{theorem}
\label{graded res}
Let $(R,\fm,\kk)$ be a noetherian local ring of codepth $c \ges 1$, $(K, \partial)$ the Koszul complex of $R$ on a minimal set of generators of $\fm$, and $A= \bigoplus_{0 \les i \les c} A_i$ its Koszul homology which is a graded-commutative algebra with the grading given by the homological degrees.
Assume that there exists a graded minimal free resolution of $\kk$ as an $A$-module of the form 
\begin{equation*}
    \BF\colon \dots\to 
    \bigoplus_{s\ges k} A^{u_{k,s}}(-s)\xra{\partial_k^\BF} 
    \bigoplus_{s\ges k-1} A^{u_{k-1,s}}(-s)\xra{\partial_{k-1}^\BF}\dots \to \bigoplus_{s\ges 1} A^{u_{1,s}}(-s) \xra{\partial_{1}^\BF} A,
\end{equation*}
with differentials $\partial_k^\BF=[\varphi^{(k)}]$ for all integers $k \ges 1$, given by chain maps $\varphi^{(k)} \colon \sC^{(k)} \to \sC^{(k-1)}$, where $\sC^{(0)} \coloneqq K$ and $\sC^{(k)} \coloneqq \bigoplus_{s\ges k} \Sigma^s K^{u_{k,s}}$ with $\Imm \varphi^{(k)}\subseteq \fm\sC^{(k-1)}$. 

Then, the limit of the iterated mapping cone of the sequence of maps $\{\varphi^{(j)}\}_{j \ges 1}$ is a minimal free resolution of $\kk$ as an $R$-module.
\end{theorem}

\begin{proof} 
Consider the notation from \cite[Section 3]{NV2}. Let $\{\sM^{(j)},f^{(j)}\}_{j \ges 0}$ denote the iterated mapping cone sequence of $\{\varphi^{(j)}\}_{j \ges 1}$. Component wise, the $r$-th homological component of each complex $\sM^{(j)}$ is given by:
\[\sM^{(j)}_r=\sC^{(0)}_r\oplus\sC^{(1)}_{r-1}\oplus\dots\oplus \sC^{(j)}_{r-j},\quad\text{ for all } r\ges 0 .\]
Since $K_i=0$ for all $i<0$, we get by definition, $\sC^{(j)}_r=0$ for all $r<j$.

Let $F$ denote the limit mapping cone, as in \cite[Construction 3.1 and Definition 3.2]{NV2}.  
Component wise, the $r$-th homological component of $F$ is given by:
\[F_r=\sC^{(0)}_r\oplus\sC^{(1)}_{r-1}\oplus\dots\oplus \sC^{(j)}_{r-j}\oplus\dots,\quad\text{ for all } r\ges 0 .\]
By construction, $F$ is a complex, that is,  $\im(\partial_r^F) \subseteq \Ker(\partial_{r-1}^F)$ for all $r \ges 1$.

To show exactness of $F$ at each homological degree  $r \ges 1$, that is, the equality $\im(\partial_r^F)=\Ker(\partial_{r-1}^F)$ holds, we use induction on $r$ and an argument similar to the proof of \cite[Theorem 4.1]{NV2}. Remark that for each $r$, we have 
\[F_{r+1} = \sM^{(i)}_{r+1},\quad F_{r} = \sM^{(i)}_{r},\quad \text{and}\quad F_{r-1} = \sM^{(i)}_{r-1}, \text{ where } i=\left\lceil\frac{r}{2}\right\rceil+1. \] 
Therefore, the problem reduces to showing the exactness of the sequence 
\begin{equation}
\label{Mi}
\sM^{(i)}_{r+1}\to\sM^{(i)}_{r}\to\sM^{(i)}_{r-1}, \text{ for each } r \ges 1 \text{ and } i=\left\lceil\frac{r}{2}\right\rceil+1. 
\end{equation}

For $r=1$, we need to prove the exactness of the sequence $\sM^{(2)}_{2}\to\sM^{(2)}_{1}\to\sM^{(2)}_{0},$ that is,
\begin{equation}
\label{r1}
\xymatrix{
\xymatrixrowsep{0.01pc}
 \xymatrixcolsep{0.5pc}
 \sC^{(1)}_1\ar@{}[d]_{\bigoplus} \ar@{->}[dr]^{-\varphi^{(1)}_1}&&\\
 \sC^{(0)}_2\ar@{->}[r]^{\partial_2^{(0)}}&\sC^{(0)}_1\ar@{->}[r]^{\partial_1^{(0)}}&\sC^{(0)}_0
 } 
 \quad \text{ which is the same as } \quad 
 \xymatrix{
\xymatrixrowsep{0.01pc}
 \xymatrixcolsep{0.5pc}
 K_0^{u_{1,1}}\ar@{}[d]_{\bigoplus} \ar@{->}[dr]^{-\varphi^{(1)}_1}&&\\
 K_2\ar@{->}[r]^{\partial_2}&K_1\ar@{->}[r]^{\partial_1}&K_0.
 }
 \end{equation}
By hypothesis, the exactness of the augmented complex $\BF\to \kk \to 0$ at homological degree zero gives in particular the exactness of the sequence
\begin{equation}
\label{A1}
 0\to A_0^{u_{1,1}}\xra{[\varphi_1^{(1)}]} A_1\to 0.
\end{equation}
Let $z\in\Ker(\partial_1)$. Then $[z]\in A_1$.
By the exactness of \eqref{A1} there exists $z'\in K_0^{u_{1,1}}$ such that $[\varphi_1^{(1)}]([z'])=[z]$. That is, $z-\varphi_1^{(1)}(z') \in \Imm\partial_2$, so there exists $z''\in K_2$ such that \(z=\partial_2(z'') - \varphi_1^{(1)}(-z')\) which gives the exactness at $K_1$ in \eqref{r1}. Hence, $F$ is exact at degree 1. 

Next, we consider $r\ges 2$ and as above set \(i=\left\lceil\frac{r}{2}\right\rceil+1\). When $r$ is even we have $r=2i-2$ and when $r$ is odd we have $r=2i-3$. Therefore, the exactness of \eqref{Mi} is equivalent to the equalities:

\begin{equation}
\label{Mi2}
    \HH{2i-2}{\sM^{(i)}}=0=\HH{2i-3}{\sM^{(i)}},\quad\text{ for all $i\ges 2$}.
\end{equation}  
 %\cite[Lemma 3.6 and Theorem 3.7]{NV2}
Using \cite[Construction 3.1]{NV2} we set \[\sM^{(j)}\coloneqq \Cone(\psi^{(j)}),\quad\text{ for all } j\ges 1,\] where $\psi^{(1)}\coloneqq \varphi^{(1)}$ and the existence of $\psi^{(j)}$ is given by the following diagram with exact rows: 
\begin{equation*}
\label{cone diagram c}
\xymatrixrowsep{2pc}
 \xymatrixcolsep{3pc}
\xymatrix{
0\ar@{->}[r]&
\Sigma^{-1}\sM^{(j)}\ar@{->}[r]^{\Sigma^{-1}\sff^{(j)}}&
\Sigma^{-1}\sM^{(j+1)}\ar@{->}[r]^{\Sigma^{-1}\sg^{(j+1)}}&
\Sigma^{j} \sC^{(j+1)}\ar@{->}[d]^{\Sigma^{j}\varphi^{(j+1)}}\ar@{.>}[dl]_{\psi^{(j+1)}}\ar@{->}[r]&0\\
0\ar@{->}[r]&\sM^{(j-1)}\ar@{->}[r]^{\sff^{(j-1)}}&\sM^{(j)}\ar@{->}[r]^{\sg^{(j)}}&\Sigma^{j} \sC^{(j)}\ar@{->}[r]&0
}
\end{equation*}
which induce the rows of long exact sequences of homology that fit in \Cref{fig c}. 

From the definition of $\sC^{(j)}$, we have for each $v \in \BZ$
\begin{equation*}
    \HH{v}{\sC^{(j)}}=\HH{v}{\bigoplus_{s\ges j} \Sigma^s K^{u_{j,s}}}\cong \bigoplus_{s\ges j}\HH{v}{\Sigma^s K^{u_{j,s}}}=
    \bigoplus_{s\ges j}\HH{v-s}{K^{u_{j,s}}}=\bigoplus_{s\ges j} A^{u_{j,s}}_{v-s},
\end{equation*}
hence, by the hypothesis $[\varphi^{(j)}]=\partial_j^\BF$ for all $j \ges 1$, each $j$-th homological component of $\BF$ is a graded $A$-module of the form
\begin{equation*}
\BF_j=\bigoplus_{v\in\BZ}\HH{v}{\sC^{(j)}}.
\end{equation*}
By the acyclicity of $\BF$, we obtain that for each $v \in \BZ$, there exists an exact sequence of homologies:
\begin{equation*}
    \dots\to \HH{v}{\sC^{(j)}}\xra{[\varphi^{(j)}]} \HH{v}{\sC^{(j-1)}}\xra{[\varphi^{(j-1)}]} \dots \to \HH{v}{\sC^{(1)}}\xra{[\varphi^{(1)]}} \HH{v}{\sC^{(0)}}. 
\end{equation*}
Therefore, the vertical sequences in \Cref{fig c} are exact. This is all that is needed in the proof by induction of \cite[Lemma 3.6]{NV2} to conclude that $[f^{(j)}]=0$ for all $j \ges 0$. In particular, all rows of the long exact sequences in \Cref{fig c} split. 
Using this result, as in the proof of \cite[Theorem 3.7]{NV2}, we conclude that for all $v \ges 0$ and $j \ges 1$,   
\begin{equation*}
\label{eq H}
\HH{v}{\sM^{(j)}} \cong \begin{cases}
\HH{v}{\sM^{(j-1)}}, & \quad 0\les v \les 2j-2 \text{ or } v \ges 2j+c+1 \\
0, & \quad v=2j-1 \text{ or } v=2j.
\end{cases}
\end{equation*}
This gives us the desired expression
\[\HH{2i-2}{\sM^{(i)}}\cong \HH{2i-2}{\sM^{(i-1)}}=0,\] where the first isomorphism follows from the first case with $j=i$ and $v=2i-2$, and the second equality follows from the second case with $j=i-1$ and $v=2(i-1)$. Similarly, we obtain
\[\HH{2i-3}{\sM^{(i)}}\cong \HH{2i-3}{\sM^{(i-1)}}=0,\] where the first isomorphism follows from the first case with $j=i$ and $v=2i-3$, and the second equality follows from the second case with $j=i-1$ and $v=2(i-1)-1$. Hence \eqref{Mi2} holds, so $F$ is acyclic.

The minimality of $F$ follows from the condition  $\Imm \varphi^{(k)}\subseteq \fm\sC^{(k-1)}$ for all $k\ges 1$ and from the fact that the differential of $K$ satisfies by definition $\partial_i\subseteq \fm\partial_{i-1}$ for all $i\ges 1.$
\end{proof}

\begin{remark}
All superscripts in \Cref{fig c} should have parentheses around them to label different complexes or maps, we omit writing the parentheses there for simplicity of notation. 
\end{remark}

\begin{landscape}
\begin{figure}[h]
 \caption{Long exact sequences of homology via the iterated mapping cone construction}
 \label{fig c}
 \centering
\xymatrixrowsep{3.3pc}
 \xymatrixcolsep{2.8pc}
\small{ \xymatrix
 {
 \vdots
 \ar@{->}[d]_{[\varphi^{j+2}]}
 &&& 
 \vdots\ar@{->}[d]
 &&&
  \vdots\ar@{->}[d]
 \\
 \HH{u-j+1}{\sC^{j+1}} 
 \ar@{->}[d]_{[\varphi^{j+1}]} 
 \ar@{->}[r]^{[\psi^{j+1}]}  
 &
 \HH{u+1}{\sM^j} 
 \ar@{->}[r]^{[\sff^j]}
 \ar@{.>}[ld]_{[\sg^{j}]}   
 & 
 \HH{u+1}{\sM^{j+1}}
 \ar@{->}[r]^{[\sg^{j+1}]} 
 &
 \HH{u-j}{\sC^{j+1}}
 \ar@{->}[d]_{[\varphi^{j+1}]} 
 \ar@{->}[r]^{[\psi^{j+1}]} \ar@{.>}[ld]_{[\psi^{j+1}]} 
 &
 \HH{u}{\sM^j}
 \ar@{->}[r] ^{[\sff^j]}
 \ar@{.>}[ld]_{[\sg^{j}]} 
 &
 \HH{u}{\sM^{j+1}} \ar@{->}[r]^{[\sg^{j+1}]} &
 \HH{u-j-1}{\sC^{j}} \ar@{->}[d]_{[\varphi^{j}]} 
 \\
 \HH{u-j+1}{\sC^j} 
 \ar@{->}[d] _{[\varphi^{j}]} 
 \ar@{->}[r]^{[\psi^{j}]} 
 &
 \HH{u}{\sM^{j-1}} 
 \ar@{->}[r]^{[\sff^{j-1}]} 
 \ar@{.>}[ld]_{[\sg^{j-1}]} 
 & 
 \HH{u}{\sM^{j}} 
 \ar@{->}[r]^{[\sg^{j}]}  
 &
\HH{u-j}{\sC^j} 
 \ar@{->}[d]_{[\varphi^{j}]} 
 \ar@{->}[r]^{[\psi^{j}]}
 \ar@{.>}[ld]_{[\psi^{j}]}
 &
 \HH{u-1}{\sM^{j-1}} 
 \ar@{->}[r]^{[\sff^{j-1}]}
 \ar@{.>}[ld]_{[\sg^{j-1}]} 
 &
 \HH{u-1}{\sM^{j}} 
 \ar@{->}[r]^{[\sg^{j}]}
 &
 \HH{u-j-1}{\sC^{j-1}}
 \ar@{->}[d]_{[\varphi^{j-1}]} 
 \ar@{.>}[ld]_{[\psi^{j}]}  
 \\
 \HH{u-j+1}{\sC^{j-1}}  
 \ar@{->}[d] 
 \ar@{->}[r]^{[\psi^{j-1}]} 
 &
 \HH{u-1}{\sM^{j-2}} 
 \ar@{->}[r]^{[\sff^{j-2}]}
 \ar@{.>}[ld]_{[\sg^{j-2}]} 
 & 
 \HH{u-1}{\sM^{j-1}} 
 \ar@{->}[r]^{[\sg^{j-1}]}  
 &
 \HH{u-j}{\sC^{j-1}} 
 \ar@{->}[d]_{[\varphi^{j-1}]} 
 \ar@{->}[r]^{[\psi^{j-1}]}
 \ar@{.>}[ld]_{[\psi^{j-1}]}  
 &
 \HH{u-2}{\sM^{j-2}} 
 \ar@{->}[r]^{[\sff^{j-2}]}
 \ar@{.>}[ld]_{[\sg^{j-2}]} 
 &
 \HH{u-2}{\sM^{j-1}} 
 \ar@{->}[r] ^{[\sg^{j-1}]}  
 &
 \HH{u-j-1}{\sC^{j-2}}
 \ar@{->}[d]_{[\varphi^{j-2}]}
 \ar@{.>}[ld]_{[\psi^{j-1}]} 
 \\
 \vdots \ar@{->}[r] &
 \HH{u-2}{\sM^{j-3}} 
 \ar@{->}[r]^{[\sff^{j-3}]} 
 & 
 \HH{u-2}{\sM^{j-2}} 
 \ar@{->}[r]^{[\sg^{j-2}]}  
 &
 \HH{u-j}{\sC^{j-2}} 
 \ar@{->}[d] 
 \ar@{->}[r]^{[\psi^{j-2}]}
 &
 \HH{u-3}{\sM^{j-3}} 
 \ar@{->}[r]^{[\sff^{j-3}]} 
 \ar@{.>}[ld]_{[\sg^{j-3}]} 
 &
 \HH{u-3}{\sM^{j-2}} 
 \ar@{->}[r]^{[\sg^{j-2}]}   
 &
 \HH{u-j-1}{\sC^{j-3}} 
 \ar@{->}[d]_{[\varphi^{j-3}]} 
 \ar@{.>}[ld]_{[\psi^{j-2}]} 
 \\
 &&&\vdots \ar@{->}[r] 
 &
 \HH{u-4}{\sM^{j-4}} 
 \ar@{->}[r] ^{[\sff^{j-4}]}
 &
 \HH{u-4}{\sM^{j-3}} 
 \ar@{->}[r]^{[\sg^{j-3}]}   &
  \HH{u-j-1}{\sC^{j-4}} 
 \ar@{->}[d] \\
 &&&&&&\vdots
 }
 }
\end{figure}
\end{landscape}

\begin{corollary}
\label{cor: res over R}
    The minimal free resolution $F$ of $\kk$ over $R$ from \Cref{graded res} has the following general description. For any $j \ges 0$, 
\[
        F_{2j} = \sC^{(0)}_{2j}\oplus \sC^{(1)}_{2j-1}\oplus\dots\oplus \sC^{(j)}_{j} \qquad \text{ and } \qquad 
        F_{2j+1}= \sC^{(0)}_{2j+1}\oplus \sC^{(1)}_{2j}\oplus\dots\oplus \sC^{(j)}_{j+1}. 
 \]
The differential maps $\partial^F_{2j}: F_{2j} \to F_{2j-1}$ are of size $j \times (j+1)$ as blocks of maps, 
    \begin{align*}
     \partial^F_{2j}&=
     \begin{pmatrix}
     \partial^{(0)}_{2j}&\varphi^{(1)}_{2j-1} &0&0&\dots&0&0\\[0.15in]
                   0&\partial^{(1)}_{2j-1}&\varphi^{(2)}_{2j-2}&0&\dots&0&0\\[0.15in]   
                   \vdots&\vdots&\vdots&\vdots&\vdots&\vdots&\vdots\\[0.15in]
                   0&0&0&0&\dots&\varphi^{(j-1)}_{j+1}&0\\[0.15in]
                   0&0&0&0&\dots&\partial^{(j-1)}_{j+1}&\varphi^{(j)}_{j}\\[0.15in]                   
     \end{pmatrix},
     \end{align*}
     and $\partial^F_{2j+1}: F_{2j+1} \to F_{2j}$ are of size $(j+1) \times (j+1)$ as blocks of maps,
     \begin{align*}
      \partial^F_{2j+1}&=
     \begin{pmatrix}
     \partial^{(0)}_{2j+1}& \varphi^{(1)}_{2j} &0&0&\dots&0&0\\[0.15in]
                   0&\partial^{(1)}_{2j}&\varphi^{(2)}_{2j-1}&0&\dots&0&0\\[0.15in]   
                   \vdots&\vdots&\vdots&\vdots&\vdots&\vdots&\vdots\\[0.15in]
                   0&0&0&0&\dots&\varphi^{(j-1)}_{j+2}&0\\[0.15in]
                   0&0&0&0&\dots&\partial^{(j-1)}_{j+2}&\varphi^{(j)}_{j+1}\\[0.15in]
                   0&0&0&0&\dots&0&\partial^{j}_{j+1}\\[0.15in]                   
     \end{pmatrix},
    \end{align*}
where the differential maps $\partial^{(k)}$ in the $\sC^{(k)}$ complex appear on the main diagonal, and the chain maps $\varphi^{(k)}$ appear on the superdiagonal (directly above the main diagonal).   
\end{corollary}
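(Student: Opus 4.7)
The plan is to unwind the iterated mapping cone construction underlying \Cref{graded res} and to read off both the direct sum decomposition and the matrix of differentials of $F$ stage by stage.

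First, I recall from the proof of \Cref{graded res} that $F_n=\sM^{(\lceil n/2 \rceil+1)}_n$, where the iterated mapping cones $\sM^{(j)}$ from \cite[Construction 3.1]{NV2} are defined by $\sM^{(0)}=K$ and, for $k\ges 1$, $\sM^{(k)}=\Cone(\tilde\varphi^{(k)}\colon \Sigma^{k-1}\sC^{(k)}\to \sM^{(k-1)})$ for a chain map $\tilde\varphi^{(k)}$ lifting $\varphi^{(k)}$. The standard mapping cone formula yields the recursive splitting
\[
\sM^{(k)}_n=\sM^{(k-1)}_n\oplus \sC^{(k)}_{n-k},
\]
so iterating in $k$ and substituting $n=2j$ or $n=2j+1$ produces the claimed decompositions of $F_{2j}$ and $F_{2j+1}$. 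The truncation at $k=j$ comes from the vanishing $\sC^{(k)}_{n-k}=0$ whenever $k>\lfloor n/2\rfloor$: since $\BF$ is a minimal graded resolution of $\kk$ over $A$ with $A_0=\kk$, its generators at homological level $k$ sit in internal degrees $\ges k$, so no shift $\Sigma^s K$ with $s<k$ occurs in $\sC^{(k)}$ and hence $\sC^{(k)}_{n-k}=0$ when $n-k<k$.

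Next, for the block form of the differentials, I would induct on $k$ using the standard cone differential
\[
\partial^{\sM^{(k)}} = \begin{pmatrix} \partial^{\sM^{(k-1)}} & \tilde\varphi^{(k)} \\ 0 & -\partial^{\sC^{(k)}} \end{pmatrix},
\]
to see that $\partial^{\sM^{(k)}}$ is block upper bidiagonal, with the internal differentials $\partial^{(i)}$ of $\sC^{(i)}$ along the main diagonal and the lifts $\tilde\varphi^{(i)}$ along the superdiagonal. Choosing each lift so that $\tilde\varphi^{(k)}$ is supported only in the $\sC^{(k-1)}$-summand of $\sM^{(k-1)}$ (equal to $\varphi^{(k)}$ there, zero elsewhere) collapses the superdiagonal to the original chain maps $\varphi^{(k)}$. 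Reading off the resulting blocks in homological degrees $2j$ and $2j+1$ then produces the rectangular matrices displayed in the statement, of sizes $j\times (j+1)$ and $(j+1)\times (j+1)$ respectively, with the $\varphi^{(k)}$'s on the superdiagonal, the $\partial^{(k)}$'s on the diagonal, and zeros in every other block.

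The main subtlety will be justifying that the lifts $\tilde\varphi^{(k)}$ can be taken with support only in the $\sC^{(k-1)}$-summand of $\sM^{(k-1)}$, which is equivalent to the strict vanishing of the compositions $\varphi^{(k-1)}\circ\varphi^{(k)}$ at the $R$-chain level. The hypothesis that $\BF$ is a complex only forces this vanishing modulo Koszul boundaries, so some care is needed to absorb the resulting null-homotopies back into the previously constructed stages; this is exactly the kind of inductive bookkeeping that the construction of \cite[Construction 3.1]{NV2} is designed to manage, and I would invoke it to conclude that no entries appear below the superdiagonal.
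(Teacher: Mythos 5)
Your proposal is correct and follows essentially the same route the paper intends: the corollary is stated without a separate argument precisely because, as you do, one reads it off from the iterated mapping cone of \cite[Construction 3.1 and Definition 3.2]{NV2} via the recursive splitting $\sM^{(k)}_n=\sM^{(k-1)}_n\oplus \sC^{(k)}_{n-k}$, the vanishing $\sC^{(k)}_r=0$ for $r<k$ (which, as you correctly observe, follows from minimality of $\BF$ and is used again in \Cref{Poi cor}), and the block form of the cone differential. The one subtlety you flag --- that the displayed bidiagonal form requires the composites $\varphi^{(k-1)}\circ\varphi^{(k)}$ to vanish at the chain level rather than merely in homology, so that the lift of $\varphi^{(k)}$ into $\sM^{(k-1)}$ can be taken supported in the top summand --- is genuine but is left implicit here exactly as in the paper; it is a property of the specific maps fed into the construction (for instance $\beta_{k-1}\beta_k=0$ by the Koszul relations in the complete intersection case) rather than something extracted from the hypothesis that $\BF$ is a complex, so your closing paragraph correctly locates where the remaining verification lives.
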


\begin{remark} 
If the hypotheses of \Cref{graded res} are satisfied for a local ring $R$ of codepth $c \ges 1$, then the description of $F$ holds as in \Cref{cor: res over R}. For a particular ring $R$, it is possible to find a minimal free resolution of $\kk$ over the Koszul homology $A$, however it is not trivial to describe specific $u_{k,s}$ and $\varphi^{(k)}_i: \sC_i^{(k)} \to \sC_i^{(k-1)}$ such that the hypotheses of \Cref{graded res} are satisfied. We believe this can be done for local rings of codepth 3, since the algebra structure of $A$ is well understood by the classification from \cite{AKM,W}. In the next sections, we provide explicit $u_{k,s}$, $\varphi^{(k)}_i$, $\sC_i^{(k)}$, and $\BF$ satisfying the hypotheses of \Cref{graded res} for the complete intersection class of codepth $c \ges 1$ and for the class $\clT$ of codepth $3$.
\end{remark}

\begin{definition} 
\label{gr Poi}
If a graded minimal free resolution of $\kk$ as an $A$-module has the form
\begin{equation*}
    \BF\colon \dots\to \bigoplus_{s\ges 0} A^{u_{k,s}}(-s)\xra{\partial_k^\BF} \bigoplus_{s\ges 0} A^{u_{k-1,s}}(-s)\xra{\partial_{k-1}^\BF}\dots \to \bigoplus_{s\ges 0} A^{u_{1,s}}(-s) \xra{\partial_{1}^\BF} A
\end{equation*}
then the \emph{graded Poincar\'e series of $A$} is defined as:
    \begin{equation*}
        \Poi{A}{\kk}(t,z)=\sum_{k,s\ges 0}u_{k,s}t^kz^s.
    \end{equation*}
\end{definition}

\begin{remark}   
If for each $k\ges 0$ we have $u_{k,s}=0$ for all $s \gg 0$,  then $u_k\coloneqq \sum_{s\ges 0}u_{k,s}$ is called  the {\it $k$-th Betti number} of $\kk$ as an $A$-module.
In this case, the \emph{Poincar\'e series of $A$} is:
\begin{equation*}
    %\label{Poi}
        \Poi{A}{\kk}(t) \coloneqq \sum_{k \ges 0}u_{k}t^k=\Poi{A}{\kk}(t,1).
    \end{equation*}
Note that in the literature, see e.g. \cite[1.7]{Av2}, $\Poi{A}{\kk}(t)$ is defined as $\Poi{A}{\kk}(t)\coloneqq \Poi{K}{\kk}(t)$.    
\end{remark}

\begin{corollary} 
\label{Poi cor}
Under the hypotheses of \Cref{graded res} and let $n$ be
the embedding dimension of $R$, we have $\Poi{R}{\kk}(t)=(1+t)^n\Poi{A}{\kk}(t,t)$.
\end{corollary}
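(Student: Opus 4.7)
The plan is to compute $\Poi{R}{\kk}(t)$ directly by counting ranks in the explicit description of $F$ given by \Cref{cor: res over R} and matching the generating series against $(1+t)^n\Poi{A}{\kk}(t,t)$.

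First, I would use \Cref{cor: res over R} to write $F_k=\bigoplus_{\ell=0}^{\lfloor k/2\rfloor}\sC^{(\ell)}_{k-\ell}$. Since $\sC^{(\ell)}=\bigoplus_{s\ges 0}\Sigma^s K^{u_{\ell,s}}$ and $K_i=R^{\binom{n}{i}}$, the $R$-rank of each summand is $\rank_R\sC^{(\ell)}_{k-\ell}=\sum_{s\ges 0}u_{\ell,s}\binom{n}{k-\ell-s}$, so minimality of $F$ gives
\[
\Poi{R}{\kk}(t)=\sum_{k\ges 0}t^k\sum_{\ell=0}^{\lfloor k/2\rfloor}\sum_{s\ges 0}u_{\ell,s}\binom{n}{k-\ell-s}.
\]

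Next, I would substitute $i=k-\ell-s$ to reindex and factor:
\[
\Poi{R}{\kk}(t)=\sum_{\ell,s,i\ges 0}u_{\ell,s}\binom{n}{i}t^{\ell+s+i}=\Bigl(\sum_{i\ges 0}\binom{n}{i}t^i\Bigr)\Bigl(\sum_{\ell,s\ges 0}u_{\ell,s}t^{\ell+s}\Bigr)=(1+t)^n\cdot\Poi{A}{\kk}(t,t),
\]
where the last equality uses the binomial theorem and \Cref{gr Poi}.

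The only subtlety is justifying that removing the constraint $\ell\les\lfloor k/2\rfloor$ in the first display does not alter the sum when matching it with the unconstrained triple sum. For this I would invoke minimality of the graded resolution $\BF$ over the positively graded $\kk$-algebra $A$ with $A_0=\kk$: the differential $\partial_\ell^{\BF}$ lands in $A_+\cdot\BF_{\ell-1}$, so by induction on $\ell$ the generators of $\BF_\ell$ sit in internal degree $\ges\ell$, i.e., $u_{\ell,s}=0$ for $s<\ell$. Any spurious contribution with $\ell>\lfloor k/2\rfloor$ and $k-\ell-s\ges 0$ forces $s<\ell$, hence vanishes. Beyond this bookkeeping observation I do not anticipate any real obstacle; the identity is a direct generating-function consequence of the block form of $F$ together with the Koszul rank formula.
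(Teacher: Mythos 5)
Your proposal is correct and follows essentially the same route as the paper: expand $\rank_R F_k$ via the block decomposition of \Cref{cor: res over R} and the Koszul ranks $\rank_R K_i=\binom{n}{i}$, reindex, and factor the resulting triple sum into $(1+t)^n\Poi{A}{\kk}(t,t)$. The paper handles your "only subtlety" by simply asserting $\sC^{(k)}_r=0$ for $r<k$ (equivalent to $u_{k,s}=0$ for $s<k$), so your minimality argument for that vanishing is a welcome but minor elaboration of the same step.
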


\begin{proof}
Note that by definition, $\sC^{(j)}_r=0$ for all $r<j.$  The rank of each free $R$-module $F_i$ is given by:
\begin{align*}
\rank_R F_i&= \sum_{k \ges 0}\rank_R \sC^{(k)}_{i-k} =  \sum_{k,s\ges 0} \rank_RK^{u_{k,s}}_{i-k-s}=\sum_{k,s\ges 0} \binom{n}{i-k-s}u_{k,s}=  \sum_{j\ges 0} \binom{n}{i-j}\sum_{k+s=j}u_{k,s}.
                \end{align*}
Thus, we get
    \begin{align*}
    \Poi{R}{\kk}(t)&=\sum_{i\ges 0} (\rank_{R}F_i)t^i =\sum_{i\ges 0}\sum_{j\ges 0} \binom{n}{i-j}t^{i-j}\sum_{k+s=j}u_{k,s}t^kt^s=(1+t)^n\Poi{A}{\kk}(t,t).
    \end{align*}
\end{proof}

%%%%%%%%%%%%%%%%%%%%%%%%%%%%%%%
%%%%%%%%%%%%%%%%%%%%%%%%%%%%%%%
\section{Application on class $\clC c$}
\label{ci}

The free resolution of the residue field $\kk$ over a complete intersection ring is well-known, see e.g. by Tate \cite[Theorem 4]{T}. In \cite{NV2}, the authors also obtained this resolution using the iterated mapping cone construction and the short exact sequences of Koszul homologies \cite[Theorem 4.1]{NV2}. One advantage of this approach is the description of the differential maps of the resolution is given explicitly as blocks of maps that are easily obtained from a finite set of data. The second advantage is that this method extends to other classes of rings as we will see in the subsequent sections.
We reformulate here some results from \cite{NV2} to better fit the scope of the present paper and to illustrate \Cref{graded res}.

For the rest of this section, let $(R,\fm,\kk)$ be a complete intersection local ring of codepth $c \ges 1$. Let $(K,\partial)$ be the Koszul complex of $R$ on a minimal set of generators of $\fm$ and $A=A_0\oplus A_1\oplus \cdots \oplus A_c$ its homology. We denote by $\{[z^1_1],\dots,[z^1_c]\}$ a $\kk$-basis for $A_1$, where each $z^1_u$ is a cycle representative in $K_1$. It is known that $A=\bigwedge A_1$, the exterior algebra on degree 1.

\begin{definition}
\label{def beta} 
For any integer $k\ges 0$, set $b_k=\textstyle\binom{k+c-1}{c-1}$. A set with $b_k$ elements is indexed by words $u_1\dots u_k$ of length $k$ with 
$1\les u_1\les \dots \les u_k\les c$ that are ordered lexicographically.

Let $\beta_k$ be the $b_{k-1}\times b_{k}$ matrix with entries in $K_1$, given as follows. For $1\les v_1\les\dots\les v_{k-1}\les c$ and $1\les u_1\les\dots\les u_{k}\les c$, its entry in the position $(v_1\dots v_{k-1},u_1\dots u_{k})$ is given by  
\begin{align*}
 \beta_k(v_1\dots v_{k-1},u_1\dots u_{k}) \coloneqq \begin{cases}
 z^1_{u},
 &\text{ if } u_1\dots u_{k}=[v_1\dots v_{k-1}u], \text{ for } 1 \les u \les c\\
 0,
 &\text{ otherwise.} 
 \end{cases}
\end{align*}
Here, we adopt the notation from \cite[Section 2]{NV2}. When the index is not in non-decreasing order, we use the square bracket notation $[u_1\dots u_k]$ to reorder it non-decreasingly; for example $[1312]=1123$. 

For $k\ges 0$, let $[\beta_k]$ denote the matrix with entries in $A_1$ where we replace entries $z^1_u$ in $\beta_k$ by $[z^1_u]$. 
\end{definition}

\begin{remark}
It is straightforward that $\beta_k$ is the matrix representation of the map $\zeta^{k-1}$ given in \cite[(2.1.1)]{NV2}. 
\end{remark}

\begin{proposition} 
\label{graded res CI}
Let $(R,\fm,\kk)$ be a complete intersection local ring of codepth $c \ges 1$, $K$ the Koszul complex of $R$ on a minimal set of generators of $\fm$, and $A$ its Koszul homology which we consider as a graded-commutative ring. Then, a graded minimal resolution of $\kk$ over $A$ is given by:
\begin{equation}
\label{res k over A ci}
   \BF\colon     \cdots \to A^{b_{k}}(-k)\xra{[\beta_k]}A^{b_{k-1}}(-k+1)\to \cdots \to A^{b_2}(-2)\xra{[\beta_2]}A^{b_1}(-1)\xra{[\beta_1]} A^{b_0}\to 0,
\end{equation}
where $b_k=\textstyle\binom{k+c-1}{c-1}$ and matrices $\beta_k$ are defined in \Cref{def beta}, for each integer $k\ges 0$.
\end{proposition}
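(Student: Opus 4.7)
Since $R$ is a complete intersection of codepth $c$, it is classical (e.g., via Tate) that the Koszul homology $A$ is the exterior algebra $\bigwedge_{\kk} A_1$ on the $c$-dimensional $\kk$-vector space with basis $\{[z^1_1],\dots,[z^1_c]\}$. My plan is to verify directly the three defining properties of a graded minimal free resolution for $\BF$ in \eqref{res k over A ci}: that it is a complex, that it is minimal, and that it is exact.

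\emph{Minimality} is immediate since each nonzero entry of $[\beta_k]$ has the form $[z^1_u] \in A_1 \subseteq A_{\ges 1}$. For the \emph{complex} condition, I would evaluate $[\beta_{k-1}]\circ [\beta_k]$ on a basis element $e_{u_1\dots u_k}$: iterating the splitting rule of \Cref{def beta} produces, for each ordered pair $(u,v)$ of values removed from the multiset $\{u_1,\dots,u_k\}$, a term of the form $[z^1_v][z^1_u] \cdot e_{w_1\dots w_{k-2}}$. Diagonal contributions with $u=v$ vanish because $[z^1_u]^2 = 0$ in $A$, and off-diagonal contributions cancel in graded-commutative pairs since $[z^1_u][z^1_v] + [z^1_v][z^1_u] = 0$. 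Some care is required when the sequence has repeated indices, but organizing the splittings by the underlying unordered pair of removed values makes the bookkeeping routine.

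The main obstacle is \emph{exactness}. My plan is to recognize $\BF$ as the classical minimal graded free resolution of $\kk$ as a module over the exterior algebra $\bigwedge A_1$, namely the Priddy/Koszul resolution $\bigwedge A_1 \otimes_{\kk} \Gamma(A_1^*)$ equipped with the Koszul differential (using divided powers to cover all characteristics). Under the identification of the basis of $\Gamma^k(A_1^*)$ with weakly increasing sequences of length $k$ in $\{1,\dots,c\}$—which matches both the rank $b_k = \binom{k+c-1}{c-1}$ and the internal degree $k$ of the shift $(-k)$—the Koszul differential corresponds, on the nose, to right multiplication by $[\beta_k]$ as defined in \Cref{def beta}. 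Exactness then follows from the standard Koszul duality between $\bigwedge V$ and the divided power algebra on $V^*$. If a self-contained argument is preferred, one can instead exhibit an explicit contracting homotopy on $\BF \otimes_A \kk \to \kk$ that records the position of the smallest index of each basis symbol, and lift it by standard nakayama-type reasoning.

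As a consistency check, once exactness is established the definition in \Cref{gr Poi} yields
\begin{equation*}
\Poi{A}{\kk}(t,z) = (1-tz)^{-c},
\end{equation*}
and \Cref{Poi cor} then recovers Tate's Poincaré series $\Poi{R}{\kk}(t) = (1+t)^n(1-t^2)^{-c}$ for the complete intersection $R$, confirming that the Betti numbers produced by $\BF$ are correct.
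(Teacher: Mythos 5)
Your proposal is correct, and it reaches the same underlying fact by a different packaging. The paper does not verify the complex condition or exactness from scratch: it decomposes $\BF$ by internal degree into the finite strands
\begin{equation*}
\CA_k \colon\ 0\to A_0^{b_{k}}\xrightarrow{[\beta_{k}]}A_1^{b_{k-1}} \xrightarrow{[\beta_{k-1}]}\cdots \xrightarrow{[\beta_{k-c+1}]}A_c^{b_{k-c}} \to 0,
\end{equation*}
observes that $\BF=\bigoplus_{k\ges 0}\CA_k$ as complexes of $\kk$-vector spaces, and imports the (split) exactness of each $\CA_k$ from \cite[Proposition~2.2]{NV2}; minimality is then noted exactly as you do. Your route instead identifies $\BF$ globally with the Cartan/Priddy resolution $\bigwedge A_1\otimes_{\kk}\Gamma$ over the exterior algebra, matching the weakly increasing index sequences of \Cref{def beta} with the divided-power basis (which accounts for $b_k=\binom{k+c-1}{c-1}$ and the shift $(-k)$), and invokes classical Koszul duality for exactness; you also supply a direct check that $[\beta_{k-1}][\beta_k]=0$, which the paper gets for free from the cited proposition. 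The two arguments rest on the same classical fact -- the cited proposition in \cite{NV2} is precisely the internal-degree-$k$ strand of the Cartan complex -- but yours is more self-contained relative to the present paper and makes the conceptual source of exactness (Koszul duality for $\bigwedge V$) explicit, while the paper's version keeps the bookkeeping in the form of the strands $\CA_k$, which is the form reused later for class $\clT$. Your closing Poincar\'e series check is consistent with the paper's subsequent corollary. One small caution: the sign/order bookkeeping in your cancellation argument for $[\beta_{k-1}][\beta_k]=0$ should be organized by the unordered pair of inserted indices, as you indicate, and the case of a repeated index relies on $[z^1_u]^2=0$, which holds in $\bigwedge A_1$ in all characteristics; as written this is fine.
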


\begin{proof} 
Let $\CA_0$ be the complex $0 \to A_0^{b_0} = \kk \to 0$. For each $k \ges 1$, by \cite[Proposition 2.2]{NV2}, we consider the exact complex of $\kk$-modules of the form
\begin{equation}
\label{ses codepth c}
\CA_k \colon\qquad  0\to A_0^{b_{k}}\xrightarrow{[\beta_{k}]}A_1^{b_{k-1}} \xrightarrow{[\beta_{k-1}]}A_2^{b_{k-2}} \xrightarrow{[\beta_{k-2}]} \cdots A_{c-1}^{b_{k-c+1}} \xrightarrow{[\beta_{k-c+1}]}A_c^{b_{k-c}} \to 0,
\end{equation}
where the component $A_0^{b_k}$ is in the homological degree $k\ges 0$.
It follows that $\BF=\bigoplus_{k \ges 0} \CA_k$. 
Indeed, for each $i\ges 0$, we have
\[ \BF_i=\Big(\bigoplus_{k\ges 0} \CA_k \Big)_i= \bigoplus_{k\ges 0} A_{k-i}^{b_i}=A^{b_i}(-i).\]

The exactness of $\CA_k$ for each $k\ges 1$ implies the acyclicity of $\BF$. 
Moreover, from the above description, it is clear that each $\BF_i$ is free as an $A$-module. The minimality of $\BF$ follows since the differential maps all have entries in $A_1$. 
\end{proof}

We recover the well-known Poincar\'e series for complete intersection rings. 
\begin{corollary}
Let $(R,\fm,\kk)$ be a complete intersection local ring of embedding dimension $n$, codepth $c \ges 1$, and $A$ its Koszul homology. Then, we have
\begin{equation*}
    \Poi{A}{\kk}(t,z)=\frac{1}{(1-tz)^c} \qquad\text{ and } \qquad \Poi{R}{\kk}(t)=\frac{(1+t)^n}{(1-t^2)^c}.
\end{equation*}
\end{corollary}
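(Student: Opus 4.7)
The plan is to read off the bigraded Betti numbers directly from the graded minimal free resolution $\BF$ constructed in \Cref{graded res CI} and then to invoke \Cref{Poi cor} to convert the bivariate series into the univariate Poincaré series of $\kk$ over $R$.

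First, I would compare the resolution~\eqref{res k over A ci} with \Cref{gr Poi}. The $k$-th term of $\BF$ is $A^{b_k}(-k)$, so in the notation $\BF_k = \bigoplus_{s \ges 0} A^{u_{k,s}}(-s)$ we have $u_{k,k} = b_k = \binom{k+c-1}{c-1}$ and $u_{k,s} = 0$ for $s \neq k$. Substituting into the definition of the graded Poincaré series yields
\begin{equation*}
\Poi{A}{\kk}(t,z) = \sum_{k \ges 0} b_k t^k z^k = \sum_{k \ges 0} \binom{k+c-1}{c-1} (tz)^k.
\end{equation*}
The standard binomial identity $\sum_{k \ges 0} \binom{k+c-1}{c-1} x^k = (1-x)^{-c}$ with $x = tz$ then gives the first formula $\Poi{A}{\kk}(t,z) = (1-tz)^{-c}$.

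Second, specializing $z = t$ yields $\Poi{A}{\kk}(t,t) = (1-t^2)^{-c}$. Since the resolution $\BF$ satisfies the hypotheses of \Cref{graded res}, \Cref{Poi cor} applies and produces
\begin{equation*}
\Poi{R}{\kk}(t) = (1+t)^n \, \Poi{A}{\kk}(t,t) = \frac{(1+t)^n}{(1-t^2)^c},
\end{equation*}
which is the second formula. There is no real obstacle here: the work has already been absorbed into \Cref{graded res CI} and \Cref{Poi cor}, and this corollary is simply a bookkeeping computation with generating functions.
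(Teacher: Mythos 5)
Your proposal is correct and follows exactly the same route as the paper: read off $u_{k,s}=b_k\delta_{s,k}$ from the resolution in \Cref{graded res CI}, sum the resulting series $\sum_{k\ges 0}\binom{k+c-1}{c-1}(tz)^k=(1-tz)^{-c}$, and then apply \Cref{Poi cor} with $z=t$. No differences worth noting.
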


\begin{proof}
By \Cref{gr Poi}, \Cref{graded res CI}, and definition of $b_k$, we have
    \begin{align*}
    \Poi{A}{\kk}(t,z)=\sum_{k\ges 0} b_kt^kz^k=\sum_{k\ges 0} \binom{k+c-1}{c-1}(tz)^k=\frac{1}{(1-tz)^c}.
\end{align*}
The expression for $\Poi{R}{\kk}(t)$ follows from \Cref{Poi cor}.
\end{proof}

Using this reformulation, we recover \cite[Theorem 4.1]{NV2} as a consequence of \Cref{graded res} and \Cref{graded res CI}.

\begin{corollary}
    Let $(R,\fm,\kk)$ be a complete intersection local ring of codepth $c \ges 1$ and $(K,\partial)$ the Koszul complex of $R$ on a minimal set of generators of $\fm$. Then, the minimal free resolution $(F,\partial^F)$ of the residue field $\kk$ over $R$ have the components and differential maps given by:
    \begin{equation*}
     F_i=K_i^{b_0} \oplus K^{b_1}_{i-2}\oplus K^{b_2}_{i-4}\oplus\cdots \oplus K^{b_j}_{i-2j}\oplus\cdots \qquad \text{ and } \qquad
 \partial_i^F=
 \begin{pmatrix}
 \partial^{b_0}_i & \beta_1& 0                &           0       &0&\cdots\\[0.2cm]
           0  & \partial_{i-2}^{b_1} &\beta_2&           0       &0&\cdots\\[0.2cm]
           0  &                 0 & \partial_{i-4}^{b_2} &\beta_3&0&\cdots\\[0.2cm]
      \vdots  &           \vdots  &          \vdots  &           \ddots  &\ddots  &
 \end{pmatrix},
 \end{equation*}
for all integers $i\ges 0$, where $b_k=\textstyle\binom{k+c-1}{c-1}$ and the matrices $\beta_k$ are defined in \Cref{def beta}, for all integers $k\ges 0$.
\end{corollary}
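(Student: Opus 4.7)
The plan is to obtain this corollary as a direct specialization of the general framework: apply \Cref{graded res} (and its unpacking in \Cref{cor: res over R}) with the graded $A$-resolution from \Cref{graded res CI}, and then observe that in the complete intersection case the direct sums over the internal grading $s$ collapse to a single term.

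First, I would read off the data needed for \Cref{graded res}. The resolution \eqref{res k over A ci} has a single free summand in each homological degree: $\BF_k = A^{b_k}(-k)$. In the notation of \Cref{graded res}, this means $u_{k,s} = b_k$ if $s = k$ and $u_{k,s} = 0$ otherwise. Therefore the prescribed complexes collapse to
\[
\sC^{(k)} \;=\; \bigoplus_{s \ges 0} \Sigma^{s} K^{u_{k,s}} \;=\; \Sigma^{k} K^{b_k},
\]
so that $\sC^{(k)}_{i} = K^{b_k}_{i-k}$. The chain maps $\varphi^{(k)} \colon \sC^{(k)} \to \sC^{(k-1)}$ can be taken to be the maps given by the matrix $\beta_k$ from \Cref{def beta}, whose entries lie in $K_1$. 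Since each $z^1_u$ is a cycle, multiplication by $\beta_k$ commutes with the Koszul differential, so $\varphi^{(k)}$ is indeed a chain map; and by construction $[\varphi^{(k)}] = [\beta_k] = \partial_k^{\BF}$ in $A$.

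Next, I would invoke \Cref{graded res CI}, which asserts that this choice makes $\BF$ a graded minimal resolution of $\kk$ over $A$. All hypotheses of \Cref{graded res} are therefore satisfied, and the limit of the iterated mapping cone of $\{\varphi^{(k)}\}_{k\ges 1}$ is a minimal free resolution $F$ of $\kk$ over $R$. Applying \Cref{cor: res over R} and substituting $\sC^{(k)}_{i-k} = K^{b_k}_{i-2k}$ gives, for every $i \ges 0$,
\[
F_i \;=\; \bigoplus_{k \ges 0} \sC^{(k)}_{i-k} \;=\; \bigoplus_{k \ges 0} K^{b_k}_{i-2k},
\]
matching the stated description. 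The differential matrix from \Cref{cor: res over R} then has $\partial^{(k)}_{i-2k} = \partial^{b_k}_{i-2k}$ on the main diagonal and $\varphi^{(k)}_{i-k} = \beta_k$ on the superdiagonal, reproducing the displayed block form of $\partial^F_i$.

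I do not anticipate a real obstacle: the entire argument is bookkeeping, namely tracking how the single internal degree $s = k$ in \Cref{graded res CI} forces $\sC^{(k)}$ to be a single shifted copy of $K^{b_k}$. The only point requiring a brief justification is the chain-level compatibility $\partial \circ \beta_k = \beta_k \circ \partial$ (so that $\varphi^{(k)}$ is a chain map rather than merely a map of graded modules), which is immediate from the fact that every entry of $\beta_k$ is either $0$ or a Koszul cycle $z^1_u \in K_1$.
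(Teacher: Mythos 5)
Your proposal is correct and follows exactly the route the paper intends: the corollary is presented there as an immediate consequence of \Cref{graded res}, \Cref{cor: res over R}, and \Cref{graded res CI}, and your unpacking of $u_{k,s}$, of $\sC^{(k)}=\Sigma^k K^{b_k}$, and of $\varphi^{(k)}=\beta_k$ is precisely the intended bookkeeping. The only imprecision is that $\beta_k$ \emph{anti}commutes with $\partial$ (its entries are degree-$1$ cycles, so $\partial\circ\beta_k=-\beta_k\circ\partial$), which is exactly what makes it a chain map out of the shifted complex $\Sigma^{k}K^{b_k}$ once the sign convention for $\Sigma$ is taken into account, as in the proof of \Cref{alpha chain}.
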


%%%%%%%%%%%%%%%%%%%%%%%%%%%%%%%
%%%%%%%%%%%%%%%%%%%%%%%%%%%%%%%
\section{Application on class $\clT$}
\label{class T}

In this section, let $(R,\fm,\kk)$ be a local ring of embedding dimension $n$ and codepth $c=3$ of class $\clT$. This is one of the five classes in the classification work of \cite{Av2, AKM, W}. We construct a sequence of complexes $\{\sC^{(k)}\}_{k \ges 0}$ (see \eqref{def Ck}) and a sequence of chain maps $\{\varphi^{(k)} \colon \sC^{(k)} \to \sC^{(k-1)}\}_{k \ges 1}$ (see \eqref{matrix phi}) that satisfy the hypotheses of \Cref{graded res}.  Consequently, they give rise to a graded minimal resolution of $\kk$ over $A$ (see \Cref{res k over A class T}), and a minimal free resolution of $\kk$ over $R$ (see \Cref{res k over R class T}). We explicitly describe the components and differential maps of these resolutions.

%%%%%%%%%%%%%%%%%%%%%%%%%%%%%%%
\subsection{Exactness of sequences on Koszul homologies} 
\label{subsec:Koszul}

For class $\clT$, the Koszul homology components $A_i$, for $0\les i\les 3$, have the following $\kk$-bases:
\begin{align*}
A_0 \colon&\ [1], \\
A_1 \colon&\ [z^1_1],\, [z^1_2],\, [z^1_3], \dots, [z^1_{a_1}], \\
A_2 \colon&\ [z^1_1]\cdot[z^1_2],\, [z^1_2]\cdot[z^1_3],\, [z^1_1]\cdot[z^1_3],\quad\text{and}\quad [z^2_{1}],\dots,[z^2_{a_2-3}], \\
A_3 \colon&\ [z^3_1],\dots, [z^3_{a_3}],
\end{align*}
where each $z^j_u$ is a cycle representative in $K_j$. Here, $A_1\cdot A_1=\Span_\kk\{[z^1_1]\cdot[z^1_2],\, [z^1_2]\cdot[z^1_3],\, [z^1_1]\cdot[z^1_3]\}$ and all other products in $A$ are zero.

Let $B_i,C_j$ be $\kk$-vector spaces with the following bases
\begin{align*}
B_0\colon&\ [1],&& \\
B_1\colon&\  [z^1_1],\, [z^1_2],\, [z^1_3], & C_1\colon&\ [z^1_4],\dots, [z^1_{a_1}], \\
B_2\colon&\ [z^1_1]\cdot[z^1_2],\, [z^1_2]\cdot[z^1_3],\, [z^1_1]\cdot[z^1_3], & C_2\colon&\ [z^2_1],\dots, [z^2_{a_2-3}],\\
&&C_3\colon&\ [z^3_1],\dots, [z^3_{a_3}].
\end{align*}
Let $B \coloneqq B_0\oplus B_1\oplus B_2$ and $C \coloneqq C_1\oplus C_2\oplus C_3$. It is straightforward that $B$ is a $\kk$-subalgebra of $A$ and $A$ is a trivial extension of $B$ with $C$, that is, $A\cong B\ltimes C$.

\begin{definition}
\label{def beta'}
Similar to \Cref{def beta}, for any integer $k\ges 0$, set $b_k=\textstyle\binom{k+2}{2}$ and let $\beta_k$ be the $b_{k-1}\times b_{k}$ matrix with entries in the subset $\{0, z^1_1,z^1_2,z^1_3\}$ of $K_1$ given as follows. For $1\les v_1\les\dots\les v_{k-1}\les 3$ and $1\les u_1\les\dots\les u_{k}\les 3$, its entry in the position $(v_1\dots v_{k-1},u_1\dots u_{k})$ is given by 
\begin{align*}
\beta_k(v_1\dots v_{k-1},u_1\dots u_{k}) \coloneqq \begin{cases}
 z^1_{u},
 &\text{ if } u_1\dots u_{k}=[v_1\dots v_{k-1}u], \text{ for } 1 \les u \les 3\\
 0,
 &\text{ otherwise.} 
 \end{cases}
      \end{align*}

For any integer $k\ges 0$, let $\beta_k'$ be the $b_{k-1}\times b_{k-2}$ matrix with entries in the subset $\{0, z^1_1\wedge z^1_2,z^1_2\wedge z^1_3,z^1_1\wedge z^1_3\}$ of $K_2$ given as follows. For $1\les u_1\les\dots\les u_{k-1}\les 3$ and $1\les v_1\les\dots\les v_{k-2}\les 3$, its entry in the position 
$(u_1\dots u_{k-1},v_1\dots v_{k-2})$ is given by
\begin{align*}
\beta_k'(u_1\dots u_{k-1},v_1\dots v_{k-2}) \coloneqq \begin{cases}
 z^1_{u'}\wedge z^1_{u''},
 &\text{ if } u_1\dots u_{k-1}=[v_1\dots v_{k-2}u]\\
 0,
 &\text{ otherwise.} 
 \end{cases}
      \end{align*}
Here, for $u\in\{1,2,3\}$ we denote $\{u',u''\}=\{1,2,3\}\setminus\{u\}$ with $u'< u''$. As before, replacing the entries by their corresponding homology, we denote $[\beta_k]$ and $[\beta_k']$ for matrices with entries in $A_1$ and $A_2$ respectively, and  $[\beta_k]$ and $[\beta_k']$ can be considered as maps of degree 1 and 2 respectively. 
\end{definition}

\begin{remark}
\label{rmk: ex beta}
We illustrate the above definitions for the first few values of $k$:
\begin{align*}
\beta_0&=0, & \beta_0'&=0, \\
\beta_1&=\begin{pmatrix}z^1_1 & z^1_2 & z^1_3\end{pmatrix},
& \beta_1'&=0, \\
\beta_2&=\begin{pmatrix} z^1_1 & z^1_2 & z^1_3 & 0 & 0 & 0 \\
    0 & z^1_1 & 0 & z^1_2 & z^1_3 & 0 \\
    0 & 0 & z^1_1 & 0 & z^1_2 & z^1_3 
\end{pmatrix}, 
& \beta_2'&= \begin{pmatrix}
    z^1_2 \wedge z^1_3 \\ z^1_1 \wedge z^1_3 \\ z^1_1 \wedge z^1_2
\end{pmatrix}, \\
\beta_3&=\begin{pmatrix} 
    z^1_1 & z^1_2 & z^1_3 & 0 & 0 & 0 & 0 & 0 & 0 & 0 \\
    0 & z^1_1 & 0 & z^1_2 & z^1_3 & 0 & 0 & 0 & 0 & 0 \\
    0 & 0 & z^1_1 & 0 & z^1_2 & z^1_3 & 0 & 0 & 0 & 0 \\
    0 & 0 & 0 & z^1_1 & 0 & 0 & z^1_2 & z^1_3 & 0 & 0 \\
    0 & 0 & 0 & 0 & z^1_1 & 0 & 0 & z^1_2 & z^1_3 & 0 \\
    0 & 0 & 0 & 0 & 0 & z^1_1 & 0 & 0 & z^1_2 & z^1_3 \\
\end{pmatrix},
& \beta_3'& = \begin{pmatrix}
    z^1_2 \wedge z^1_3 & 0 & 0 \\
    z^1_1 \wedge z^1_3 & z^1_2 \wedge z^1_3 & 0 \\
    z^1_1 \wedge z^1_2 & 0 & z^1_2 \wedge z^1_3 \\
    0 & z^1_1 \wedge z^1_3 & 0 \\
    0 & z^1_1 \wedge z^1_2 & z^1_1 \wedge z^1_3 \\
    0 & 0 & z^1_1 \wedge z^1_2
\end{pmatrix}.
\end{align*}
\end{remark}

\begin{proposition}
\label{exact B}
Retain the above notation. For any integer $k\ges 1$, there exists an exact complex of $\kk$-modules of the form:
\begin{equation*}
 \CB_k\colon \xymatrixrowsep{1.5pc}
 \xymatrixcolsep{2pc}
 \xymatrix{
 0\ar@{->}[r]^{} 
 &B_0^{b_{k}}\ar@{->}[r]^{[\beta_k]}
 &B_1^{b_{k-1}}\ar@{->}[r]^{[\beta_{k-1}]}\ar@{}[d]_{\bigoplus} 
 &B_2^{b_{k-2}}\ar@{->}[r]^{}
 &0,
 \\
 &           
 &B_0^{b_{k-3}}\ar@{^{(}->}[ru]_{[\beta_{k-1}']}           
 &
 &
 }
 \end{equation*}
with $B_0^{b_k}$ is placed in the homological position $k$ in the complex $\CB_k$.
\end{proposition}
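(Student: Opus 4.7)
My plan is to reduce the exactness of $\CB_k$ to the codepth 3 complete intersection case (\Cref{graded res CI} with $c=3$), exploiting that the subalgebra $B\subseteq A$ has exactly the same multiplicative structure in degrees $\les 2$ as the Koszul homology of a codepth 3 complete intersection: namely $B_0\cong\kk$, $B_1\cong\kk^3$, and $B_2\cong\bigwedge^2 B_1$ with exterior multiplication. Since the matrices $\beta_k$ from \Cref{def beta'} coincide literally with those from \Cref{def beta} for $c=3$, applying \Cref{graded res CI} (more precisely, the exact complex $\CA_k$ produced in its proof) to an auxiliary codepth 3 complete intersection yields the exact sequence of $\kk$-vector spaces
\[
0\to B_0^{b_k}\xra{[\beta_k]}B_1^{b_{k-1}}\xra{[\beta_{k-1}]}B_2^{b_{k-2}}\xra{\delta}B_0^{b_{k-3}}\to 0,
\]
where $\delta\coloneqq[\beta_{k-2}]$ is the induced surjection, after identifying $A_3\cong\kk\cdot[z^1_1z^1_2z^1_3]$ (in the CI setting) with $B_0$. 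From this I extract: $[\beta_k]$ is injective, $\Ker[\beta_{k-1}]=\im[\beta_k]$, and $\Ker\delta=\im[\beta_{k-1}]$.

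The key remaining step is to verify that $[\beta'_{k-1}]$ is a section of $\delta$, i.e. $\delta\circ[\beta'_{k-1}]=\operatorname{id}_{B_0^{b_{k-3}}}$. By definition, the $(w,v)$-entry of this composition equals $\sum_u [\beta_{k-2}(w,u)]\cdot[\beta'_{k-1}(u,v)]$, where $u$ ranges over weakly increasing sequences of length $k-2$. Nonzero summands require $u=[wt]=[vs]$ for some $t,s\in\{1,2,3\}$. When $w\ne v$ this forces $t\ne s$, so the product $[z^1_t\cdot z^1_{s'}z^1_{s''}]$ has a repeated factor and vanishes. When $w=v$, necessarily $t=s$, and summing over $s\in\{1,2,3\}$ yields
\[
\sum_{s=1}^3 [z^1_s\cdot z^1_{s'}z^1_{s''}]=(1-1+1)[z^1_1z^1_2z^1_3]=[z^1_1z^1_2z^1_3],
\]
by the standard Koszul signs, confirming the claim.

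Once this is established, $[\beta'_{k-1}]$ is injective and splits $\delta$, producing the decomposition $B_2^{b_{k-2}}=\im[\beta_{k-1}]\oplus\im[\beta'_{k-1}]$. The kernel of the middle map $([\beta_{k-1}]\,|\,[\beta'_{k-1}])$ of $\CB_k$ then equals $\Ker[\beta_{k-1}]\oplus 0=\im[\beta_k]\oplus 0$, which is exactly the image of the left map, while surjectivity of this middle map is immediate from the splitting. Combined with injectivity of $[\beta_k]$ from Step 1, this finishes the proof. The \emph{hardest part} will be the index-matching and Koszul-sign bookkeeping in Step 2; if needed, it can be made transparent by identifying $B_i^{b_{k-i}}\cong S_{k-i}\otimes\bigwedge^i V$ with $S=\kk[x_1,x_2,x_3]$, $V=\langle x_1,x_2,x_3\rangle$, and $[z^1_i]\leftrightarrow x_i$, under which $\delta$ becomes the dualized Koszul differential $d_3^*$ from the minimal free resolution of $\kk$ over $S$ and $[\beta'_{k-1}]$ is its natural section $x^\beta\mapsto\sum_s x^{\beta+e_s}\otimes e_{s'}\wedge e_{s''}$, whence $d_3^*\circ[\beta'_{k-1}]=\operatorname{id}$ by a one-line sign computation.
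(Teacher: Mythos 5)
Your proposal is correct and takes essentially the same route as the paper: both reduce to the four-term split exact sequence coming from the codepth-3 complete intersection case (the exterior algebra on $B_1$, via \cite[Proposition 2.2]{NV2}) and then verify entrywise that $[\beta'_{k-1}]$ is a right inverse of the formal map $[\beta_{k-2}]$, with the identical index-matching and Koszul-sign computation $(1-1+1)=1$ on the diagonal and vanishing off the diagonal. The closing reformulation over $\kk[x_1,x_2,x_3]$ is just a repackaging of that same bookkeeping, not a different argument.
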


\begin{proof}
By \cite[Proposition 2.2]{NV2} with $c=3$, for each integer $k\ges 1$ there exists a split exact sequence of $\kk$-modules:
\begin{equation*}
    \xymatrixrowsep{1.5pc}
 \xymatrixcolsep{2pc}
 \xymatrix{
 0\ar@{->}[r]^{} 
 &(\bigwedge^0 B_1)^{b_k}\ar@{->}[r]^{[\beta_k]}
 &(\bigwedge^1 B_1)^{b_{k-1}}\ar@{->}[r]^{[\beta_{k-1}]}
 &(\bigwedge^2 B_1)^{b_{k-2}}\ar@{->}[r]^{[\beta_{k-2}]}
 &(\bigwedge^3 B_1)^{b_{k-3}}\ar@{->}[r]^{}
 &0.
 }
\end{equation*}
To get the desired short exact sequence, using $B_0\cong\bigwedge^0 B_1\cong \bigwedge^3 B_1$ and  $B_2\cong\bigwedge^2 B_1$, it is enough to check that for $k\ges 3$, the map $[\beta_{k-1}']$ is a right inverse of the map $[\beta_{k-2}]$. That is, 
for all $k\ges 1$, we check  $\beta_{k}\beta_{k+1}'=z^1_1\wedge z^1_2\wedge z^1_3\, I_{b_{k-1}}$ in $K_3$, where $I$ denotes the identity matrix.

Indeed, for $1\les v_1\les\dots \les v_{k-1}\les 3$ and $1\les v_1'\les\dots\les v_{k-1}'\les 3$, we calculate the entry $(v_1\dots v_{k-1},v_1'\dots v_{k-1}')$ of the product of matrices $\beta_{k}\beta_{k+1}'$. By definitions, the entries of $\beta_{k}$ and $\beta_{k+1}'$ are nonzero when $u_1\dots u_k = [v_1 \dots v_{k-1}u] = [v_1' \dots v_{k-1}'v]$, for some $u,v \in \{1,2,3\}$. 

If $v_1\dots v_{k-1} = v_1'\dots v_{k-1}'$, then the entry in $\beta_{k}\beta_{k+1}'$ is given by 
\[
    \sum_{u=1}^3 z^1_u\wedge z^1_{u'}\wedge z^1_{u''} =z^1_1\wedge z^1_2\wedge z^1_3 + z^1_2\wedge z^1_1\wedge z^1_3 + z^1_3\wedge z^1_1\wedge z^1_2 = z^1_1\wedge z^1_2\wedge z^1_3,
\]
where the last equality holds by the Koszul relations.     

If $v_1\dots v_{k-1}\not = v_1'\dots v_{k-1}'$, then the entry in $\beta_{k}\beta_{k+1}'$ is given by 
\[
    \sum_{[v_1\dots v_{k-1}u]=[v_1'\dots v_{k-1}'v]} z^1_u\wedge z^1_{v'}\wedge z^1_{v''}=0.
\]
To see this equality, if $u=v$, the condition $[v_1\dots v_{k-1}u]=[v_1'\dots v_{k-1}'v]$ implies $v_1\dots v_{k-1} = v_1'\dots v_{k-1}'$, which contradicts our assumption. Hence, $u\not=v$ which implies $u\in\{v',v''\}$, and we apply the Koszul relations again to get the desired result. Hence, $\CB_k$ is exact. 
\end{proof}

\begin{remark}
\label{exact C}
It is straightforward to see that the following complexes of $\kk$-modules are exact:
\begin{align*}
  \CC_1\colon & 0 \to A_0^{a_1-3}\xra{[\gamma_1]}C_1\to 0, \notag \\
  \CC_2\colon & 0 \to A_0^{a_2-3}\xra{[\gamma_2]}C_2\to 0, \label{exact C} \\
  \CC_3\colon & 0 \to A_0^{a_3}\xra{[\gamma_3]}C_3\to 0, \notag 
\end{align*}
where $C_j$ is in the homological position $0$ in the complex $\CC_j$, for all $1 \les j \les 3$, and $\gamma_1$ is the $1\times (a_1-3)$ matrix with entries in $K_1$, $\gamma_2$ is the $1\times (a_2-3)$ matrix with entries in $K_2$, and $\gamma_3$ is the $1\times a_3$ matrix with entries in $K_3$ respectively given by:
\begin{equation}
    \label{def gamma}
    \gamma_1=\begin{pmatrix} z^1_4 &\dots& z^1_{a_1} \end{pmatrix}, \qquad \gamma_2=\begin{pmatrix} z^2_1 &\dots& z^2_{a_2-3}\end{pmatrix}, \qquad 
\gamma_3=\begin{pmatrix} z^3_1 &\dots& z^3_{a_3}\end{pmatrix}.
\end{equation}
\end{remark}

\begin{definition}
\label{def seq T}
For any integer $k \ges 1$, let $\alpha_k$ be the $\ell_{k-1}\times \ell_{k}$ matrix with entries in $K_1$, $\alpha_k'$ be the $\ell_{k-1}\times \ell_k'$ matrix with entries in $K_2$, and $\alpha_k''$ be the $\ell_{k-1}\times \ell_k''$ matrix with entries in $K_3$ respectively given by:
\begin{equation}
\label{alpha and alpha'} 
  \alpha_k =\left(\bigoplus_{i=0}^{k-1}\beta_{k-i}^{d_{i}}\Big|\gamma_1^{\ell_{k-1}}\right), \qquad 
  \alpha_k'=\left( \begin{array}{c|c}\begin{array}{c} 0_{d_{k-1} \times \ell_{k-2}} \\ \hline \bigoplus_{i=0}^{k-2}\beta_{k-i}'^{d_{i}} \end{array} & \gamma_2^{\ell_{k-1}} \end{array} \right), \qquad \alpha_k''=\gamma_3^{\ell_{k-1}},
\end{equation}
where the matrices $\beta_i, \beta'_i$ are defined in \Cref{def beta'}, the matrices $\gamma_1,\gamma_2,\gamma_3$ are defined in \eqref{def gamma}, and 
$\{\ell_k\}_{k\ges 0}$, $\{\ell_k'\}_{k\ges 0}$, $\{\ell_k''\}_{k\ges 0}$, and $\{d_k\}_{k\ges 0}$ are sequences of integers satisfying the following recurrence relations:
\begin{align}
\label{ell}
    \ell_k&=\sum_{i=0}^{k}b_{k-i}d_{i},\quad  \text{ where } \\
    d_0&=1 \text{ and }  d_k=(a_1-3)\ell_{k-1},\notag \\
\label{ellprime}
    \ell_0'&=0 \text{ and } \ell_k'=\ell_{k-2}+(a_2-3)\ell_{k-1},\\
\label{elldprime}    
     \ell_0''&=0 \text{ and } \ell_k''=a_3\ell_{k-1}.
\end{align}
\end{definition}

\begin{remark}
\label{ellex}
We list the first few elements of these sequences in terms of $a_i= \rank_{\kk} (A_i)$:
\begin{align*} 
   \ell_0&=1, &\ell_1 &=a_1,         &\ell_2&=a_1^2-3,                   &\ell_3&=a_1^3-6a_1+1,\\
    d_0&=1, &d_1&=a_1-3, &d_2&=(a_1-3)a_1, &d_3&=(a_1-3)(a_1^2-3), \\
   \ell'_0&=0, &\ell_1'&=a_2-3,  &\ell_2'&=a_1a_2-3a_1+1,  &\ell_3'&=a_1^2a_2 - 3a_1^2-3a_2+a_1+9,\\
   \ell''_0&=0, &\ell_1''&=a_3, &\ell_2''&= a_1a_3, &\ell_3''&=a_1^2a_3-3a_3. 
\end{align*}
\end{remark}

\begin{remark}
\label{rmk T}
It is straightforward to check that the sequences of integers $\{b_k\}_{k\ges 0}$ and $\{\ell_k\}_{k\ges 0}$
have respectively the following generating functions:
\begin{equation*}
  \frac{1}{(1-t)^3}\quad\text{and}\quad  \frac{1}{(1-t)^3-t(a_1-3)}.
\end{equation*}
In particular, the sequences of integers
$\{\ell_k'\}_{k\ges 0}$, $\{\ell_k''\}_{k\ges 0}$, and $\{d_k\}_{k\ges 0}$ 
have respectively the following  generating functions:
\begin{equation*}
   \frac{t^2+t(a_2-3)}{(1-t)^3-t(a_1-3)}, \quad \frac{ta_3}{(1-t)^3-t(a_1-3)}, \text{ and } \quad \frac{t(a_1-3)}{(1-t)^3-t(a_1-3)}+1.
\end{equation*} 
\end{remark}

\begin{theorem}
\label{thm:exact A}
Let $(R,\fm, \kk)$ be a local ring of codepth 3 and of class $\clT$, $A$ its Koszul homology,   and retain the above notation. Then for any integer $k\ges 1$, there exists an exact complex of $\kk$-modules:
\begin{equation*}
\CA_k\colon    
 \xymatrixrowsep{2pc}
 \xymatrixcolsep{3pc}
 \xymatrix{
 0\ar@{->}[r]^{} 
 &A_0^{\ell_k}\ar@{->}[r]^{[\alpha_k]}
 &A_1^{\ell_{k-1}} \ar@{->}[r]^{[\alpha_{k-1}]}\ar@{}[d]_{\bigoplus}
 &A_2^{\ell_{k-2}}\ar@{->}[r]^{[\alpha_{k-2}]}\ar@{}[d]_{\bigoplus}
 &A_3^{\ell_{k-3}}\ar@{->}[r]^{}
 &0.
 \\
 &           
 &A_0^{\ell'_{k-1}}\ar@{->}[ru]_{[\alpha'_{k-1}]}          
 &A_0^{\ell''_{k-2}}\ar@{->}[ru]_{[\alpha''_{k-2}]}
 &
 &
 } 
\end{equation*}
with $A_0^{\ell_k}$ is placed in the homological position $k$ in the complex $\CA_k$. The sequences of integers $\ell_k, \ell_k', \ell_k''$ 
and the matrices $\alpha_k, \alpha_k', \alpha_k''$ are given in \Cref{def seq T}.
\end{theorem}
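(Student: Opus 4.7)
The plan is to realize $\CA_k$ as a direct sum of exact complexes drawn from \Cref{exact B} and \Cref{exact C}, and then deduce exactness from exactness of each summand. The backbone is the $\kk$-vector space decomposition $A=B\oplus C$ of the Koszul homology in class $\clT$, together with the vanishing relations
\[
B\cdot C \;=\; 0, \qquad C\cdot C \;=\; 0, \qquad A_1\cdot A_2 \;=\; 0,
\]
which all follow from the statement in \Cref{subsec:Koszul} that all unspecified products in $A$ are zero. These relations are the algebraic mechanism that forces the matrices $\alpha_k$, $\alpha'_k$, $\alpha''_k$ to respect the decomposition block-by-block.

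For each $j\in\{1,\dots,k\}$, let $\CB_j[k-j]$ denote the homological shift of $\CB_j$ placing its top at position $k$, and similarly shift $\CC_i$ up so its top lies at the appropriate position. The key claim is the isomorphism of complexes of $\kk$-modules
\[
\CA_k \;\cong\; \bigoplus_{j=1}^{k}\CB_j[k-j]^{d_{k-j}} \;\oplus\; \CC_1[k-1]^{\ell_{k-1}} \;\oplus\; \CC_2[k-2]^{\ell_{k-2}} \;\oplus\; \CC_3[k-3]^{\ell_{k-3}}.
\]
Verifying the rank match at each position is a direct calculation using the recurrences in \Cref{def seq T}. For instance, at position $k$, $\sum_{j\ges 1}b_j d_{k-j}+d_k=\ell_k$ recovers $A_0^{\ell_k}$; at position $k-1$, the $\CB_j^{d_{k-j}}$-pieces contribute $B_1^{\ell_{k-1}}\oplus B_0^{\ell_{k-3}}$, and appending $C_1^{\ell_{k-1}}$ from $\CC_1^{\ell_{k-1}}$ together with $A_0^{(a_2-3)\ell_{k-2}}$ from $\CC_2^{\ell_{k-2}}$ produces $A_1^{\ell_{k-1}}\oplus A_0^{\ell'_{k-1}}$; positions $k-2$ and $k-3$ are analogous.

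The technical heart is matching differentials. The beta-blocks $\bigoplus_i \beta_{k-i}^{d_i}$ inside $\alpha_k$ realize the top differentials of the $\CB_{k-i}^{d_i}$-pieces, while the block $\gamma_1^{\ell_{k-1}}$ in $\alpha_k$ is exactly the differential of $\CC_1^{\ell_{k-1}}$. Analogously, the $\beta'$-blocks of $\alpha'_k$ recover the $\beta'_{k-i}$-arrows internal to the $\CB_{k-i}$-summands, and the $\gamma_2$- and $\gamma_3$-blocks come from $\CC_2^{\ell_{k-2}}$ and $\CC_3^{\ell_{k-3}}$, respectively. The vanishings above prevent unwanted cross-terms: $A_1\cdot C_1=0$ kills any contribution of the $\gamma_1$-columns of $\alpha_{k-1}$ against $B_1$-valued inputs, and $A_1\cdot A_2=0$ forces $[\alpha_{k-2}]=0$, so the differentials of $\CA_k$ genuinely split along the direct sum.

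Once this identification is in place, exactness follows because each $\CB_j$ is exact by \Cref{exact B}, each $\CC_i$ is exact by \Cref{exact C}, and a direct sum of exact complexes is exact. The main obstacle is the combinatorial bookkeeping aligning the block structure of $\alpha_k, \alpha'_k, \alpha''_k$ with the differentials of the summands under a consistent ordering of source and target coordinates; underneath it all, the exactness of each $\CB_j$ rests on the identity $\beta_k\beta'_{k+1}=z^1_1\wedge z^1_2\wedge z^1_3\cdot I$ established in the proof of \Cref{exact B}.
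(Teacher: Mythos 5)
Your proposal is correct and follows essentially the same route as the paper: both identify $\CA_k$ with the direct sum $\bigl(\bigoplus_{i=0}^{k-1}\Sigma^{i}\CB_{k-i}^{d_i}\bigr)\oplus\bigl(\bigoplus_{j=1}^{3}\Sigma^{k-j}\CC_j^{\ell_{k-j}}\bigr)$ and deduce exactness from Proposition~\ref{exact B} and Remark~\ref{exact C}; the only cosmetic difference is that the paper first splits off $\Sigma^{k-3}\CC_3^{\ell_{k-3}}$ using that $[\alpha_{k-2}]=0$ and $[\alpha''_{k-2}]$ is an isomorphism, whereas you write the full decomposition at once with the rank bookkeeping made explicit.
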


\begin{proof}
Observe that in class $\clT$, for any integer $k\ges 1$ the map $[\alpha_{k-2}]: A_2^{\ell_{k-2}} \to A_3^{\ell_{k-3}}$ is a zero map and the map $[\alpha_{k-2}'']: A_0^{\ell''_{k-2}} \to A_3^{\ell_{k-3}}$ defined in \eqref{alpha and alpha'} is an isomorphism. The complex $\CA_k$ in the theorem is a direct sum of the complexes $\CA'_k$ and $\Sigma^{k-3}\CC_3^{\ell_{k-3}}$, where
\[
\CA_k'\colon    \xymatrixrowsep{1.5pc}
 \xymatrixcolsep{2pc}
 \xymatrix{
 0\ar@{->}[r]^{} 
 &A_0^{\ell_k}\ar@{->}[r]^{[\alpha_k]}
 &A_1^{\ell_{k-1}} \ar@{->}[r]^{[\alpha_{k-1}]}\ar@{}[d]_{\bigoplus}
 &A_2^{\ell_{k-2}}\ar@{->}[r]^{}
 &0,
 \\
 &           
 &A_0^{\ell'_{k-1}}\ar@{->}[ru]_{[\alpha'_{k-1}]}          
 &
 &
 }
\]
with $A_0^{\ell_k}$ in the homological degree $k$. Thus, it suffices to show that $\CA_k'$ is exact for each $k \ges 1$. By construction, we have the following isomorphism of complexes: 
\[ \CA_k' \cong \CB_k^{d_0}\oplus\dots \oplus \Sigma^{k-1}\CB_{1}^{d_{k-1}}\oplus \Sigma^{k-1}\CC_1^{\ell_{k-1}}  \oplus \Sigma^{k-2}\CC_2^{\ell_{k-2}}, \]
where the complexes $\CB_i$ and $\CC_j$ are exact by \Cref{exact B} and \Cref{exact C} respectively. Hence, $\CA_k'$ is exact, and we have an isomorphism of complexes of $\kk$-modules:
\[ \CA_k \cong \left(\bigoplus_{i=0}^{k-1} \Sigma^{i}\CB_{k-i}^{d_{i}}\right) \oplus
\left( \bigoplus_{j=1}^3\Sigma^{k-j}\CC_j^{\ell_{k-j}}\right). \]
\end{proof}

\begin{remark}
In what follows, we illustrate the structure of $\CA_k$ for small cases. Observe that the map $[\gamma_1]$ is a zero map, except when mapping from $A_0^{a_1-3}$, and we use the identification $A_0=B_0 \cong \kk$ throughout. 

\medskip

\underline {Case $k=1$:} By definitions, we have $\ell_0=1,\, \ell_1=a_1$, $\ell'_0=0$, $d_0 = 1$, and $\alpha_1=(\beta_1^{d_0}|\gamma_1^{\ell_0}) = (z^1_1\dots z^1_{a_1})$.  Therefore, it is clear that the sequence \[\CA_1\colon \ 0\to A_0^{a_1}\xra{([z^1])} A_1\to 0\] is exact and there is an isomorphism of complexes $\CA_1\cong \CB_1^{d_0}\oplus \CC_1^{\ell_0}$:
\[
\xymatrixrowsep{1pc}
 \xymatrixcolsep{2pc}
 \xymatrix{
  0\ar@{->}[r]^{} 
 &B_0^{b_1}\ar@{->}[r]^{[\beta_1]}\ar@{}[d]_{\bigoplus}         
 &B_1^{b_0}\ar@{->}[r]   \ar@{}[d]_{\bigoplus}          
 &0
 \\ 
 0\ar@{->}[r]^{} 
 &A_0^{a_1-3}\ar@{->}[r]^{[\gamma_1]}
 &C_1\ar@{->}[r]^{}
 &0.
 }
\]

\medskip

\underline {Case $k=2$:} By using the equalities  
\[
\ell_0=b_0d_0=1, \qquad \ell_1=b_1d_0+b_0d_1=a_1, \qquad \ell_2=b_2d_0+b_1d_1+b_0d_2,\qquad 
\ell_1'= a_2-3, \qquad d_2=(a_1-3)\ell_1, \]
and the maps
\[ 
\alpha_2 = \begin{pmatrix}\beta_2^{d_0}\oplus\beta_1^{d_1}|\gamma_1^{\ell_1}\end{pmatrix}, \qquad \alpha'_1=\gamma_2^{\ell_0},  
\]
we get that 
\[
\CA_2\colon    \xymatrixrowsep{1.5pc}
 \xymatrixcolsep{2pc}
 \xymatrix{
 0\ar@{->}[r]^{} 
 &A_0^{\ell_2}\ar@{->}[r]^{[\alpha_2]}
 &A_1^{\ell_{1}} \ar@{->}[r]^{[\alpha_{1}]}\ar@{}[d]_{\bigoplus}
 &A_2^{\ell_{0}}\ar@{->}[r]^{}
 &0
 \\
 &           
 &A_0^{\ell'_{1}}\ar@{->}[ru]_{[\alpha'_{1}]}          
 &
 &
 }
\]
is isomorphic to the following sum of complexes:
\[
\xymatrixrowsep{1pc}
 \xymatrixcolsep{2pc}
 \xymatrix{ 
 0\ar@{->}[r]^{} 
 &B_0^{b_2d_0}\ar@{->}[r]^{[\beta_2^{d_0}]}\ar@{}[d]_{\bigoplus}
 &B_1^{b_1d_0} \ar@{->}[r]^{[\beta_1^{d_0}]}\ar@{}[d]_{\bigoplus}
 &B_2^{b_0d_0}\ar@{->}[r]^{}\ar@{}[d]_{\bigoplus}
 &0
 \\ 
  0\ar@{->}[r]^{} 
 &B_0^{b_1d_1}\ar@{->}[r]^{[\beta_1^{d_1}]}\ar@{}[d]_{\bigoplus}         
 &B_1^{b_0d_1}\ar@{->}[r]   \ar@{}[d]_{\bigoplus}          
 &0\ar@{->}[r]^{}\ar@{}[d]_{\bigoplus} 
 &0
 \\ 
 0\ar@{->}[r]^{} 
 & A_0^{(a_1-3)\ell_1}\ar@{->}[r]^{[\gamma_1^{\ell_1}]}\ar@{}[d]_{\bigoplus}
 & C_1^{\ell_1}\ar@{->}[r]^{} \ar@{}[d]_{\bigoplus}
 &0\ar@{->}[r]^{} \ar@{}[d]_{\bigoplus}
 &0
 \\
 0\ar@{->}[r]^{} 
 &0\ar@{->}[r]^{} 
 & A_0^{a_2-3}\ar@{->}[r]^{[\gamma_2]}
 & C_2^{}\ar@{->}[r]^{} 
 &0.
 }
\]
That is, there is an isomorphism of complexes $\CA_2\cong\CB_2^{d_0} \oplus \Sigma \CB_1^{d_1}\oplus\Sigma\CC_1^{\ell_1}\oplus \CC_2^{\ell_0}$.

\medskip

\underline {Case $k=3$:} By using the equalities 
\[
\ell_3=b_3d_0+b_2d_1+b_1d_2+b_0d_3,\quad
  \ell_2'= b_0d_0 + (a_2-3)\ell_1,\quad
  \ell_1''=a_3\ell_0,\quad
d_2=(a_1-3)\ell_1, \quad d_3=(a_1-3)\ell_2,
\]
and the maps
\[
\alpha_3=\begin{pmatrix}\beta_3^{d_0}\oplus\beta_2^{ d_1}\oplus\beta_1^{d_2}|\gamma_1^{\ell_2}\end{pmatrix},\quad
  \alpha'_2=(\beta_2'^{d_0}|\gamma_2^{\ell_1}), \quad\text{and}\quad \alpha_1''=\gamma_3^{\ell_0}
\]
we get that 
\[
\CA_3\colon    
 \xymatrixrowsep{2pc}
 \xymatrixcolsep{3pc}
 \xymatrix{
 0\ar@{->}[r]^{} 
 &A_0^{\ell_3}\ar@{->}[r]^{[\alpha_3]}
 &A_1^{\ell_{2}} \ar@{->}[r]^{[\alpha_{2}]}\ar@{}[d]_{\bigoplus}
 &A_2^{\ell_{1}}\ar@{->}[r]^{[\alpha_{1}]}\ar@{}[d]_{\bigoplus}
 &A_3^{\ell_{0}}\ar@{->}[r]^{}
 &0.
 \\
 &           
 &A_0^{\ell'_{2}}\ar@{->}[ru]_{[\alpha'_{2}]}          
 &A_0^{\ell''_{1}}\ar@{->}[ru]_{[\alpha''_{1}]}
 &
 &
 } 
\]
is isomorphic to the following sum of complexes:
 \begin{equation*}
\xymatrixrowsep{1pc}
 \xymatrixcolsep{2.5pc}
 \xymatrix{
  0\ar@{->}[r]^{} 
 &B_0^{b_3d_0}\ar@{->}[r]^{[\beta_3^{d_0}]}
 &B_1^{b_2d_0} \ar@{->}[r]^{[\beta_2^{d_0}]}\ar@{}[d]_{\bigoplus}
 &B_2^{b_1d_0}\ar@{->}[r]^{}
 &0 \ar@{->}[r]^{} 
 &0
 \\
&\ar@{}[d]_{\bigoplus}
&B_0^{b_0d_0}\ar@{->}[ru]_{[\beta_2'^{d_0}]}\ar@{}[d]_{\bigoplus}
&\ar@{}[d]_{\bigoplus}
& &
 \\ 
  0\ar@{->}[r]^{} 
 &B_0^{b_2d_1}\ar@{->}[r]^{[\beta_2^{d_1}]}\ar@{}[d]_{\bigoplus}
 &B_1^{b_1d_1} \ar@{->}[r]^{[\beta_1^{d_1}]}\ar@{}[d]_{\bigoplus}
 &B_2^{b_0d_1}\ar@{->}[r]^{}\ar@{}[d]_{\bigoplus}
 &0 \ar@{->}[r]^{} 
 &0
 \\
  0\ar@{->}[r]^{} 
 &B_0^{b_1d_2}\ar@{->}[r]^{[\beta_1^{d_2}]}\ar@{}[d]_{\bigoplus}         
 &B_1^{b_0d_2}\ar@{->}[r]   \ar@{}[d]_{\bigoplus}          
 &0\ar@{->}[r]^{}\ar@{}[d]_{\bigoplus} 
 &0 \ar@{->}[r]^{} 
 &0
 \\ 
 0\ar@{->}[r]^{} 
 & A_0^{(a_1-3)\ell_2}\ar@{->}[r]^{[\gamma_1^{\ell_2}]}\ar@{}[d]_{\bigoplus}
 & C_1^{\ell_2}\ar@{->}[r]^{} \ar@{}[d]_{\bigoplus}
 &0\ar@{->}[r]^{} \ar@{}[d]_{\bigoplus}
 &0 \ar@{->}[r]^{} 
 &0
 \\
 0\ar@{->}[r]^{} 
 &0\ar@{->}[r]^{}  \ar@{}[d]_{\bigoplus}
 & A_0^{(a_2-3)\ell_1}\ar@{->}[r]^{[\gamma_2^{\ell_1}]} \ar@{}[d]_{\bigoplus}
 & C_2^{\ell_1}\ar@{->}[r]^{}  \ar@{}[d]_{\bigoplus}
 &0 \ar@{->}[r]^{}  \ar@{}[d]_{\bigoplus}
 &0
 \\
 0\ar@{->}[r]^{} 
 &0\ar@{->}[r]^{}
 &0\ar@{->}[r]^{} 
 &A_0^{a_3\ell_0} \ar@{->}[r]^{[\gamma_3^{\ell_0}]}
 &C_3^{\ell_0} \ar@{->}[r]^{} 
 &0
 }
\end{equation*}
that is, we get an isomorphism of complexes $\CA_3\cong \CB_3^{d_0}\oplus   \Sigma\CB_2^{d_1}\oplus \Sigma^2\CB_1^{d_2}\oplus\Sigma^2\CC_1^{\ell_2}\oplus \Sigma\CC_2^{\ell_1}\oplus \CC_3^{\ell_0}$.
\end{remark}

\begin{corollary}
\label{cor:acyclic A}
Let $(R,\fm, \kk)$ be a local ring of codepth 3 and of class $\clT$, $A$ its Koszul homology, and retain the above notation. Consider the complex $\BA \coloneqq \bigoplus_{k \ges 0} \CA_k$ given as the sum of complexes of $\kk$-modules, where $\CA_0$ is the complex $0\to A_0^{\ell_0} \to 0$, and $\CA_k$ is as in \Cref{thm:exact A}. 

Then, $\BA$ is an acyclic complex of graded $A$-modules with $\HH{0}{\BA} \cong A_0 \cong \kk$, of the following form:
\begin{equation} 
\label{BA}
\BA \colon \hspace{-0.2in}
 \xymatrixrowsep{3pc}
 \xymatrixcolsep{2.2pc}
 \xymatrix{
 &\dots\ar@{->}[r]^{}
 &A^{\ell_{i}}(-i) \ar@{->}[r]^{[\alpha_{i}]}\ar@{}[d]_{\bigoplus}
 &A^{\ell_{i-1}}(-i+1)\ar@{->}[r]^{[\alpha_{i-1}]}\ar@{}[d]_{\bigoplus}
 &\qquad \dots \qquad \ar@{->}[r]^{[\alpha_{2}]}
 &A^{\ell_{1}}(-1)\ar@{->}[r]^{[\alpha_{1}]}\ar@{}[d]_{\bigoplus}
 &A^{\ell_0}
 \\  
 &\dots\ar@{->}[ru]^{}       
 &\kk^{\ell'_{i}}(-i-1)\ar@{->}[ru]|{[\alpha'_{i}]}\ar@{}[d]_{\bigoplus}
 &\kk^{\ell'_{i-1}}(-i)\ar@{->}[ru]|{[\alpha'_{i-1}]}\ar@{}[d]_{\bigoplus}
 &\dots\ar@{->}[ru]|{[\alpha'_2]}
 &\kk^{\ell'_{1}}(-2)\ar@{->}[ru]|{[\alpha'_{1}]}\ar@{}[d]_{\bigoplus}
 &
  \\   
 &\dots\ar@{->}[ruu]^{}        
 &\kk^{\ell''_{i}}(-i-2)\ar@{->}[ruu]|{[\alpha''_{i}]}
 &\kk^{\ell''_{i-1}}(-i-1)\ar@{->}[ruu]|{[\alpha''_{i-1}]} 
 &\dots\ar@{->}[ruu]|{[\alpha''_2]}
 &\kk^{\ell''_{1}}(-3)\ar@{->}[ruu]|{[\alpha''_{1}]}
 &
 } 
\end{equation}
where the sequences of integers $\ell_k, \ell_k', \ell_k''$ 
and the matrices $\alpha_k, \alpha_k', \alpha_k''$ are given in \Cref{def seq T}.
\end{corollary}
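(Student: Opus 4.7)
The plan is to identify $\BA = \bigoplus_{k \ges 0} \CA_k$ with the complex displayed in \eqref{BA} by regrouping summands according to internal degree, and then use additivity of homology over direct sums together with \Cref{thm:exact A} to compute $\HH{*}{\BA}$.

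First I would unpack the direct sum at each homological position $i$. In $\CA_k$ the term at homological degree $i$ is nonzero only when $k-3 \les i \les k$, so position $i$ of $\BA$ receives contributions exactly from $\CA_i,\CA_{i+1},\CA_{i+2},\CA_{i+3}$, giving the $\kk$-module
\begin{equation*}
\BA_i \;=\; A_0^{\ell_i} \;\oplus\; \bigl(A_1^{\ell_i} \oplus A_0^{\ell'_i}\bigr) \;\oplus\; \bigl(A_2^{\ell_i} \oplus A_0^{\ell''_i}\bigr) \;\oplus\; A_3^{\ell_i}.
\end{equation*}
Collecting the four $A_*^{\ell_i}$ summands into a single free $A$-module with generators in internal degree $i$ yields $A^{\ell_i}(-i)$, while the leftover $A_0^{\ell'_i}$ and $A_0^{\ell''_i}$ sit in internal degrees $i+1$ and $i+2$, yielding $\kk^{\ell'_i}(-i-1)$ and $\kk^{\ell''_i}(-i-2)$ respectively. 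Tracing the differentials of each $\CA_k$, and using that in class $\clT$ the block $[\alpha_{k-2}]\colon A_2^{\ell_{k-2}} \to A_3^{\ell_{k-3}}$ vanishes (because $A_1 \cdot A_2 = 0$), the assembled differential $\BA_i \to \BA_{i-1}$ has precisely the three nonzero blocks $[\alpha_i]$, $[\alpha'_i]$, $[\alpha''_i]$, all landing in $A^{\ell_{i-1}}(-i+1)$, exactly as in \eqref{BA}.

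Next I would verify that the assembled complex is a complex of graded $A$-modules. The matrix $[\alpha_i]$ has entries in $A_1$ and is therefore $A$-linear of internal degree $0$ from $A^{\ell_i}(-i)$ to $A^{\ell_{i-1}}(-i+1)$. The sources $\kk^{\ell'_i}(-i-1)$ and $\kk^{\ell''_i}(-i-2)$ are $A$-modules via the augmentation $A \twoheadrightarrow \kk$ and are therefore annihilated by $A_+$. The images of $[\alpha'_i]$ and $[\alpha''_i]$ lie in the $A_2^{\ell_{i-1}}$ and $A_3^{\ell_{i-1}}$ summands of $A^{\ell_{i-1}}(-i+1)$, and in class $\clT$ the augmentation ideal $A_+$ annihilates $A_{\ges 2}$: indeed $B_1\cdot B_2 \subseteq B_3 = 0$ and the trivial-extension structure $A \cong B \ltimes C$ forces $B\cdot C = C\cdot C = 0$. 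Thus $[\alpha'_i]$ and $[\alpha''_i]$ are $A$-linear of internal degree $0$.

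Finally, since direct sums commute with homology of $\kk$-module complexes,
\begin{equation*}
\HH{i}{\BA} \;=\; \bigoplus_{k\ges 0} \HH{i}{\CA_k} \quad \text{for every } i\ges 0.
\end{equation*}
By \Cref{thm:exact A}, $\CA_k$ is exact for every $k\ges 1$, so the only surviving summand is $\HH{i}{\CA_0}$, which equals $A_0^{\ell_0}=A_0\cong\kk$ when $i=0$ and vanishes otherwise. Hence $\BA$ is acyclic in positive degrees with $\HH{0}{\BA}\cong A_0\cong\kk$. The hard part will be the bookkeeping in the first step: redistributing the $\kk$-module pieces of each $\CA_k$ across the graded $A$-module components of $\BA$, and then confirming that the block matrix of the assembled differential has the shape claimed in \eqref{BA} — a check that crucially relies on the vanishing $A_+\cdot A_{\ges 2}=0$ characteristic of class $\clT$.
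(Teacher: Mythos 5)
Your proposal is correct and follows essentially the same route as the paper: decompose $\BA_i$ into the contributions of $\CA_i,\dots,\CA_{i+3}$, regroup them as $A^{\ell_i}(-i)\oplus\kk^{\ell'_i}(-i-1)\oplus\kk^{\ell''_i}(-i-2)$, note that the entries of $[\alpha_i],[\alpha'_i],[\alpha''_i]$ lie in $A_1,A_2,A_3$ to get the graded $A$-module structure, and deduce acyclicity from the exactness of each $\CA_k$ via additivity of homology. The only quibble is your attribution of $B_+\cdot C=0$ to the trivial-extension structure --- a trivial extension $B\ltimes C$ need not satisfy $B_+C=0$; that vanishing is instead part of the stated multiplication table of class $\clT$ (all products in $A$ other than $A_1\cdot A_1$ vanish) --- but the fact itself is correct and your argument goes through.
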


\begin{proof}
By \Cref{thm:exact A}, for each $k \ges 1$, the complex of $\kk$-modules  $\CA_k$ is exact so $\BA = \bigoplus_{k \ges 0} \CA_k$ is an acyclic complex of $\kk$-modules with $\HH{0}{\BA}=A_0^{\ell_0}=A_0\cong \kk$. 

We now show that $\BA$ has the form as in \eqref{BA} as a complex of $A$-modules. Observe that for any $i \ges 0$, the $i$-th homological component of $\CA_k$ can be described as follows,
\begin{equation*}
    (\CA_k)_i=
    \begin{cases}
    A_0^{\ell_{k}},&\text{if } i=k\\[0.05in]
    A_1^{\ell_{k-1}}\oplus  A_0^{\ell'_{k-1}},&\text{if } i=k-1\\[0.05in]
    A_2^{\ell_{k-2}}\oplus  A_0^{\ell''_{k-2}} ,&\text{if } i=k-2\\[0.05in]
    A_3^{\ell_{k-3}},&\text{if } i=k-3\\[0.05in]
    0, &\text{otherwise}.
    \end{cases}
\end{equation*}
Thus, each $i$-th homological component of $\BA$ is given by:
\begin{align*}
    \BA_i&=(\CA_i)_i\oplus (\CA_{i+1})_i\oplus (\CA_{i+2})_i\oplus(\CA_{i+3})_i\\
        &=A_0^{\ell_{i}}\oplus \big( A_1^{\ell_{i}}\oplus  A_0^{\ell'_{i}}\big)\oplus \big( A_2^{\ell_{i}}\oplus  A_0^{\ell''_{i}}\big)\oplus A_3^{\ell_{i}}\\
        &=A^{\ell_{i}}\oplus \kk^{\ell'_{i}}\oplus \kk^{\ell''_{i}}.
\end{align*}
By \Cref{def seq T}, for each $i \ges 1$ the matrices $[\alpha_i], [\alpha_i'], [\alpha_i'']$ have entries in $A_1$, $A_2$, and $A_3$ respectively. This explains the internal shifts in each homological component of $\BA$ in \eqref{BA}. Therefore, $\BA$ becomes an acyclic complex of graded $A$-modules.
\end{proof}

%%%%%%%%%%%%%%%%%%%%%%%%%%%%%%%
\subsection{Constructing the sequences of complexes $\sC^{(k)}$ and of chain maps $\varphi^{(k)}$}
\label{Candphi}

In this subsection, we describe the sequence of complexes $\{\sC^{(k)}\}_{k \ges 0}$ in terms of the Koszul complex $K$, and construct the sequence of chain maps $\{\varphi^{(k)} \colon \sC^{(k)} \to \sC^{(k-1)}\}_{k \ges 1}$ that satisfy the hypotheses of \Cref{graded res} for class $\clT$. To better describe the graded resolution of $\kk$ over $A$ coming from our construction, we need to relabel the sequences of integers and matrices defined in \Cref{def seq T}. 

For all integers $k,r \ges 0$, we set
\begin{align}
\label{ellks}
\ell_{k,r} &\coloneqq \begin{cases}
\ell_k,&\text{ if } r=k\\
\ell_k',&\text{ if } r=k+1\\
\ell_k'',&\text{ if } r=k+2\\
0, &\text{ otherwise}
\end{cases}
\end{align}
and for all integers $k,r \ges 1$, we set
\begin{align}
\label{alphaks}
\alpha_{k,r}&\coloneqq \begin{cases}
\alpha_k,&\text{ if } r=k\\
\alpha_k',&\text{ if } r=k+1\\
\alpha_k'',&\text{ if } r=k+2\\
0, &\text{ otherwise}
\end{cases}\\
\label{def delta}
\Delta_k &\coloneqq 
\begin{pmatrix} \alpha_{k,k} &  \alpha_{k,k+1} & \alpha_{k,k+2}\end{pmatrix}, 
\end{align}
where the sequences of integers $\ell_k, \ell_k', \ell_k''$ 
and the matrices $\alpha_k, \alpha_k', \alpha_k''$ with entries in $K_1,K_2,K_3$, respectively, are given in \Cref{def seq T}.

\begin{lemma} 
\label{alpha chain}
Retain the above notation. Then, for all $k,r,m\ges 1$ we have 
\begin{itemize}
\item[(1)] The multiplication map $\Sigma^{1+r-k} K^{\ell_{k,r}} \xra{\alpha_{k,r}} K^{\ell_{k-1,k-1}}$ is a chain map of complexes. 
\item[(2)] The maps
\(
    (\Sigma K^{\ell_{k,k}} \oplus \Sigma^2 K^{\ell_{k,k+1}} \oplus \Sigma^3 K^{\ell_{k,k+2}})^m \xra{\Delta_k^m} (K^{\ell_{k-1,k-1}})^m,
\)
given by blocks of maps where each $\Delta_k$ appears in the main diagonal of $\Delta_k^m$ and $0$ everywhere else, are chain maps of complexes.
\end{itemize}
\end{lemma}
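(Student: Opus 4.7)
The plan is to reduce both parts to a single elementary observation about the DG structure of the Koszul complex $K$: left multiplication by a cycle of homological degree $s$ defines a chain map $\Sigma^s K \to K$. Part (1) then follows by applying this entry-wise to the matrices $\alpha_{k,r}$, and Part (2) is a formal consequence of Part (1).

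First I would identify, from \Cref{def beta'} and \eqref{def gamma}, the entries of each $\alpha_{k,r}$ as cycles in $K$ of degree matching the shift $\Sigma^{1+r-k}$. For $r=k$ the matrix $\alpha_{k,k}=\alpha_k$ is built from blocks of $\beta_i$ and $\gamma_1$, whose entries lie in $\{0\}\cup\{z^1_u\}\subseteq K_1$ and are $1$-cycles. For $r=k+1$ the matrix $\alpha_{k,k+1}=\alpha_k'$ is built from blocks of $\beta_i'$ and $\gamma_2$; its entries lie in $\{0\}\cup\{z^1_{u'}\wedge z^1_{u''}\}\cup\{z^2_u\}\subseteq K_2$ and are $2$-cycles, with the wedge products being cycles because the product of two cycles in the DG algebra $K$ is again a cycle. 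For $r=k+2$ the matrix $\alpha_{k,k+2}=\alpha_k''=\gamma_3^{\ell_{k-1}}$ has entries in $\{z^3_u\}\subseteq K_3$, which are $3$-cycles. For all other $r$ we have $\alpha_{k,r}=0$, and the claim is trivial.

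Next I would invoke the general DG-algebra fact: for a cycle $z\in K_s$, the Leibniz rule gives
\[
\partial^K(z\cdot x)\;=\;\partial^K(z)\cdot x+(-1)^s\,z\cdot\partial^K(x)\;=\;(-1)^s\,z\cdot\partial^K(x),
\]
which is exactly the image of $\partial^{\Sigma^s K}(x)=(-1)^s\partial^K(x)$ under multiplication by $z$. Applying this entry-wise and using linearity in each block gives that $\alpha_{k,r}\colon\Sigma^{1+r-k}K^{\ell_{k,r}}\to K^{\ell_{k-1,k-1}}$ is a chain map, which is Part (1).

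Finally, for Part (2), by \eqref{def delta} the map $\Delta_k$ is the horizontal concatenation of the three chain maps from Part (1), hence is itself a chain map out of the direct sum $\Sigma K^{\ell_{k,k}}\oplus\Sigma^2 K^{\ell_{k,k+1}}\oplus\Sigma^3 K^{\ell_{k,k+2}}$. Taking the block-diagonal $m$-fold copy $\Delta_k^m$ preserves the chain map property, which proves the claim. I do not anticipate any real obstacle here; the only bookkeeping point is matching the sign convention on $\Sigma^s$, but this is precisely compensated by the $(-1)^s$ produced by the Leibniz rule.
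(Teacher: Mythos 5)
Your proposal is correct and follows essentially the same route as the paper: the paper likewise reduces Part (1) to the general fact that an entry-wise multiplication by a matrix of degree-$j$ cycles commutes with the differentials up to the sign $(-1)^j$, verified via the Leibniz rule, and then obtains Part (2) as a direct consequence. Your extra remarks identifying the entries of each $\alpha_{k,r}$ as cycles of the correct degree (including that the wedge products $z^1_{u'}\wedge z^1_{u''}$ are cycles) only make explicit what the paper asserts in one line.
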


\begin{proof}
(1): It is enough to show that for all $i,k\ges 1$ and $r\in\{k,k+1,k+2\}$, the diagram below is commutative:
 \begin{equation*}
  \xymatrixrowsep{2pc}
  \xymatrixcolsep{2pc}
  \xymatrix
  {
  K_{i+k-r-1}^{\ell_{k,r}} \ar@{->}[d]_{  (-1)^{r-k+1}\partial_{i+k-r-1}^{\ell_{k,r}} }
                         \ar@{->}[r]^{\alpha_{k,r}}
  & K_{i}^{ \ell_{k-1,k-1} }
  \ar@{->}[d]^{ \partial_{i}^{\ell_{k-1,k-1}} }
  \\
  K_{i+k-r-2}^{\ell_{k,r}} \ar@{->}[r]_{\alpha_{k,r}}
  &  K_{i-1}^{\ell_{k-1,k-1}}.
 }
\end{equation*}
Since all the entries of $\alpha_{k,r}$ are cycles of degree $r-k+1$, the commutativity of the diagram  follows from a more general statement as follows.

Let $\theta$ be an $u\times v$ matrix with entries that are cycles of the Koszul complex $(K,\partial)$ of degree $j\ges 0$. Then, for all $i\ges 0$ the following diagram is commutative:
\begin{equation*}
  \xymatrixrowsep{1.5pc}
  \xymatrixcolsep{1.5pc}
  \xymatrix
  {
  K_{i-j}^{v}\ar@{->}[d]_{  (-1)^{j}\partial_{i-j}^{v}}
                         \ar@{->}[r]^{\theta}
  & K_{i}^{u}
  \ar@{->}[d]^{ \partial_{i}^{u} }
  \\
  K_{i-j-1}^{v} \ar@{->}[r]_{\theta}
  &  K_{i-1}^{u}.
 }
\end{equation*}
Indeed, for each  $(y_k)_{1\les k \les v}\in K_{i-j}^v$ we have: 
\begin{align*}
    (\partial_i^u\circ \theta) (y_k)_{1\les k \les v}
    &=\partial_i^u\left( \sum_{k=1}^v\theta(s,k)\wedge y_k\right)_{1\les s \les u}\\
    &=\left( \sum_{k=1}^v\partial_i(\theta(s,k)\wedge  y_k)\right)_{1\les s \les u}\\
    &=\left( \sum_{k=1}^v\partial_j(\theta(s,k))\wedge y_k+(-1)^j\theta(s,k)\wedge \partial_{i-j}(y_k)\right)_{1\les s \les u}\\
    &=(-1)^j\left(\sum_{k=1}^v\theta(s,k)\wedge \partial_{i-j}(y_k)\right)_{1\les s \les u}\\
    &=(-1)^j\theta\left(\partial_{i-j}(y_k)\right)_{1\les k \les v}\\
    &=\left(\theta\circ (-1)^j\partial_{i-j}^v\right)(y_k)_{1\les k \les v}.
\end{align*}
Finally, (2) is a direct consequence of (1). 
\end{proof}

To facilitate the construction of complexes $\sC^{(k)}$ and maps $\varphi^{(k)}$, we now introduce a bookkeeping tool which also gives a visual illustration of how the resolution $\BF$ of $\kk$ over $A$ grows for class $\clT$. We will see in the proof of \Cref{res k over A class T} that the acyclic complex $\BA$ in \eqref{BA} plays a role of the ``seed" (cf. \eqref{seed}) which builds the tree in \Cref{fig:tree}. 

Consider a non-commutative algebra $\TF$ generated over $\BZ$ by indeterminates $X_{i,i}, X_{i,i+1}, X_{i,i+2}$, all of degree $i \ges 1$ which is recorded by the first index, subject to the relations
\begin{equation}
\label{algebraF}
    \TF \coloneqq \frac{\BZ \langle X_{i,i}, X_{i,i+1}, X_{i,i+2} \colon \forall i \ges 1 \rangle}{\left(X_{i,i}X_{j,j}, \ X_{i,i+1}X_{j,j}, \  X_{i,i+2}X_{j,j}\colon \forall i,j \ges 1 \right)}. 
\end{equation}
Remark that for $k \ges 0$, each graded component $\TF_k$ is a free $\BZ$-module generated by monomials whose total degree is $k$. One can show that $\rank_{\BZ} \TF_k = 3^k$.

Set $X_{0,0}=1$ and $X_{0,1}=0=X_{0,2}$. We associate to $\TF$  the nodes of a  directed rooted tree. The first portion of the tree, that  corresponds to monomials of total degrees $\les 1$, is given by:

\begin{equation}
\label{seed}
 \xymatrixrowsep{1pc}
 \xymatrixcolsep{3.5pc}
 \xymatrix{
\dots \ar@{->}[r]^{e_{4,4}}
&X_{3,3}\ar@{->}[r]^{e_{3,3}} 
&X_{2,2}\ar@{->}[r]^{e_{2,2}} 
&X_{1,1}\ar@{->}[r]^{e_{1,1}}
&{1}.
\\
\dots \ar@{->}[ur]|{e_{4,5}}
&X_{3,4}\ar@{->}[ur]|{e_{3,4}}
&X_{2,3}\ar@{->}[ur]|{e_{2,3}} 
&X_{1,2}\ar@{->}[ur]|{e_{1,2}}
&
\\
\dots \ar@{->}[uur]|{e_{4,6}}
&X_{3,5}\ar@{->}[uur]|{e_{3,5}} 
&X_{2,4}\ar@{->}[uur]|{e_{2,4}} 
&X_{1,3}\ar@{->}[uur]|{e_{1,3}}
&
}
\end{equation}
We reefer to it as the {\it seed} of the tree.

The node set of this tree consists of monomials in $\TF$. We denote by $\bn$ a monomial only given in terms of $X_{i,i+1}$ and $X_{j,j+2}$ for some $i,j \ges 0$; note that $\bn$ could be $1$. From the relations in $\TF$, every monomial (node) is either of the form $X_{i,i} \cdot \bn$ or $\bn$. 
The arrows of the tree go only from total degree $k$ monomials to total degree $k-1$ monomials in the following way. For any $i \ges 1$, there are three arrows going to each node $X_{i-1,i-1} \cdot \bn$ of the form:
\begin{equation}
\label{arrow}
X_{i,r}\cdot \bn \xra{e_{i,r}(\bn)} X_{i-1,i-1}\cdot \bn,\quad\text{where}\ r \in \{i, i+1, i+2\}. 
\end{equation}

In \Cref{fig:tree}, we display the first few layers of this tree, going into the root $1$, to demonstrate how the tree grows. Note that each $k$-th vertical layer contains monomials, arranged in specific order, whose total degree (sum of the first indices in each monomial) is $k$ and generate $\TF_k$. 
\begin{figure}[h!]   
\centering
    \caption{A directed rooted tree associated to the class $\clT$}
    \label{fig:tree}
\begin{equation*}
 \xymatrixrowsep{1.5pc}
 \xymatrixcolsep{8pc}
\SMALL{ \xymatrix{ 
\dots X_{3,3}\ar@{->}[r]^{e_{3,3}} 
&X_{2,2}\ar@{->}[r]^{e_{2,2}} 
&X_{1,1}\ar@{->}[r]^{e_{1,1}}
&{1}
\\
X_{3,4}\ar@{->}[ur]|{e_{3,4}}
&X_{2,3}\ar@{->}[ur]|{e_{2,3}} 
&X_{1,2}\ar@{->}[ur]|{e_{1,2}}
&
\\
X_{3,5}\ar@{->}[uur]|{e_{3,5}} 
&X_{2,4}\ar@{->}[uur]|{e_{2,4}} 
&X_{1,3}\ar@{->}[uur]|{e_{1,3}}
&
\\
X_{1,1}X_{2,3}\ar@{->}[uur]|{e_{1,1}(X_{2,3})}
&X_{1,1}X_{1,2}\ar@{->}[uur]|{e_{1,1}(X_{1,2})}
&
&
\\
X_{1,2}X_{2,3}\ar@{->}[uuur]|{e_{1,2}(X_{2,3})}
&X_{1,2}X_{1,2}\ar@{->}[uuur]|{e_{1,2}(X_{1,2})}
&
&
\\
X_{1,3}X_{2,3}\ar@{->}[uuuur]|{e_{1,3}(X_{2,3})}
&X_{1,3}X_{1,2}\ar@{->}[uuuur]|{e_{1,3}(X_{1,2})}
&
&
\\
X_{1,1}X_{2,4}\ar@{->}[uuuur]|{e_{1,1}(X_{2,4})}
&X_{1,1}X_{1,3}\ar@{->}[uuuur]|{e_{1,1}(X_{1,3})}
&
&
\\
X_{1,2}X_{2,4}\ar@{->}[uuuuur]|{e_{1,2}(X_{2,4})}
&X_{1,2}X_{1,3}\ar@{->}[uuuuur]|{e_{1,2}(X_{1,3})}
&
&
\\
X_{1,3}X_{2,4}\ar@{->}[uuuuuur]|{e_{1,3}(X_{2,4})}
&X_{1,3}X_{1,3}\ar@{->}[uuuuuur]|{e_{1,3}(X_{1,3})}
&
&
\\
X_{2,2}X_{1,2} \ar@{->}[uuuuuur]|{e_{2,2}(X_{1,2})}
& 
& 
& 
\\
X_{2,3}X_{1,2} \ar@{->}[uuuuuuur]|{e_{2,3}(X_{1,2})}
& 
& 
& 
\\
X_{2,4}X_{1,2}  \ar@{->}[uuuuuuuur]|{e_{2,4}(X_{1,2})}
& 
& 
& 
\\
X_{1,1}X_{1,2}X_{1,2} \ar@{->}[uuuuuuuur]|{e_{1,1}(X_{1,2}X_{1,2})}
& 
& 
& 
\\
X_{1,2}X_{1,2}X_{1,2} \ar@{->}[uuuuuuuuur]|{e_{1,2}(X_{1,2}X_{1,2})}
& 
& 
& 
\\
X_{1,3}X_{1,2}X_{1,2}  \ar@{->}[uuuuuuuuuur]|{e_{1,3}(X_{1,2}X_{1,2})}
& 
& 
& 
\\
\vdots 
& 
& 
& 
}}
\end{equation*}
\end{figure}

\medskip

Using this tree, we now construct the sequence of complexes $\{\sC^{(k)}\}_{k \ges 0}$ and the sequence of chain maps $\{\varphi^{(k)} \colon \sC^{(k)} \to \sC^{(k-1)}\}_{k \ges 1}$ that satisfy the hypotheses of the main \Cref{graded res} for class $\clT$.

First, for each $k \ges 0$, we associate each $k$-th vertical layer in the above tree to the complex $\sC^{(k)}$ that is a direct sum of copies of the Koszul complex $K$ with the corresponding shifts, as follows. Each monomial in $\TF$ has the form:
\begin{equation*}
\bm=\prod_{i=1}^j X_{k_i,s_i},\ \text{where } 1\les k_1\les s_1\les k_1+2\text{ and } 1 \les k_i< s_i\les k_i+2\ \text{for all } i\ges 2.
\end{equation*}
To each monomial $\bm$ above, we associate four degrees:
\begin{equation}
\label{deg}
\deg_1\bm\coloneqq\sum_{i=1}^jk_i,\qquad \deg_2\bm\coloneqq\sum_{i=1}^j s_i,\qquad \deg_3\bm\coloneqq\prod_{i=1}^j\ell_{k_i,s_i},\qquad\text{ and }\qquad \deg_4\bm\coloneqq j.
\end{equation} 
The component of $\sC^{(k)}$, where $k=\deg_1\bm$ that corresponds to the monomial $\bm$, is given by:
\begin{equation*}
%\label{identification}
\bm \mapsto \Sigma^s K^{u},\quad \text{where } s=\deg_2\bm\text{ and } u=\deg_3\bm.
\end{equation*} 
Here, the first degree of $\bm$ is the total monomial degree in $\TF$, the second degree associates to the shifting or internal grading of $K$, the third degree associates to the number of direct sums of copies of $K$, and the fourth degree is the number of indeterminates appearing in $\bm$. 
In particular, we identify each indeterminate in $\TF$ with a shifting of copies of $K$ as $X_{k,s}\mapsto \Sigma^{s}K^{\ell_{k,s}}$, for all $k\ges 0$ and $s\in\{k,k+1,k+2\}$.

In conclusion, for all $k\ges 0$,  we set 
\begin{equation}
\label{def Ck}
    \sC^{(k)}\coloneqq \bigoplus_{\deg_1\bm=k}  \Sigma^{\deg_2\bm} K^{\deg_3\bm}   
\end{equation}
where $\bm$ is a monomial of $\TF$ (cf.~\eqref{algebraF}) and different degrees of $\bm$ are defined in \eqref{deg}. Observe that $\sC^{(0)}=K$ as $\bm=1$ of first degree $0$.

From the description of $\bm$ and its degrees, it is straightforward to obtain the following result.
\begin{lemma} 
\label{lem Ck}
For all $k\ges 0$, we have an isomorphism of complexes: \(\sC^{(k)}\cong \bigoplus_{s\ges 0} \Sigma^s K^{u_{k,s}}\),
where 
\begin{equation}
\label{uks}
u_{k,s} \coloneqq \sum_{\substack{k_1+\dots+k_j=k\\ s_1+\dots+s_j=s\\  s_i\neq k_i, \forall i\ges 2}} \ell_{k_1,s_1}\ell_{k_2,s_2}\dots\ell_{k_j,s_j}, \quad \text{ for each $k,s \ges 0$}.
\end{equation}
\end{lemma}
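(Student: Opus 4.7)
The statement is essentially a bookkeeping identification: it rewrites the combinatorial sum defining $\sC^{(k)}$ in \eqref{def Ck} by grouping summands according to their internal shift. My plan is to decompose the proof into three short steps: describe the nonzero monomials of $\TF_k$ explicitly, group the summands of $\sC^{(k)}$ by shift value, and match the resulting multiplicities with the given formula for $u_{k,s}$.

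First, I would unpack the relations in \eqref{algebraF}. The relations $X_{i,i}X_{j,j}=X_{i,i+1}X_{j,j}=X_{i,i+2}X_{j,j}=0$ for all $i,j\ges 1$ mean that in a nonzero monomial $\bm$, the indeterminate $X_{j,j}$ (with $j\ges 1$) can appear only in the rightmost position; reading the monomial as $\bm=\prod_{i=1}^jX_{k_i,s_i}$ (left to right) this corresponds exactly to the restriction stated just before \eqref{deg}, namely $k_1\les s_1\les k_1+2$ while $k_i<s_i\les k_i+2$ for all $i\ges 2$. With the conventions $X_{0,0}=1$ and $X_{0,1}=X_{0,2}=0$, the case $k_1=0$ forces $s_1=0$, so the constant monomial $\bm=1$ occurs only once.

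Next, I apply the definition \eqref{def Ck}:
\[
    \sC^{(k)} \;=\; \bigoplus_{\deg_1\bm=k}\Sigma^{\deg_2\bm}K^{\deg_3\bm}.
\]
Using the associativity of direct sums, I collect summands according to the value of $s \coloneqq \deg_2\bm$. For any fixed $s\ges 0$, all terms with $\deg_2\bm=s$ contribute copies of $\Sigma^sK$; combining them gives
\[
    \sC^{(k)} \;\cong\; \bigoplus_{s\ges 0}\Sigma^sK^{N_{k,s}}, \qquad N_{k,s}\coloneqq \sum_{\substack{\deg_1\bm=k\\ \deg_2\bm=s}} \deg_3\bm.
\]
Here the inner sum ranges over all nonzero monomials $\bm$ of $\TF$ with $\deg_1\bm=k$ and $\deg_2\bm=s$.

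Finally, I substitute the combinatorial descriptions from \eqref{deg}. A nonzero monomial $\bm=\prod_{i=1}^jX_{k_i,s_i}$ with $\deg_1\bm=k$ and $\deg_2\bm=s$ is recorded by a tuple $(k_1,s_1,\dots,k_j,s_j)$ with $k_1+\cdots+k_j=k$, $s_1+\cdots+s_j=s$, and the admissibility conditions from Step 1 (equivalently, $s_i\neq k_i$ for all $i\ges 2$, since $s_i\in\{k_i,k_i+1,k_i+2\}$ is already enforced by $\ell_{k_i,s_i}=0$ otherwise via \eqref{ellks}). The contribution of $\bm$ is $\deg_3\bm=\prod_{i=1}^j\ell_{k_i,s_i}$, which matches the summand in the definition \eqref{uks} of $u_{k,s}$. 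Thus $N_{k,s}=u_{k,s}$, proving the isomorphism. The only mild subtlety—what I would flag as the main thing to be careful about—is ensuring that the values of $s_i$ with $\ell_{k_i,s_i}=0$ do not contribute spurious terms; this is handled by \eqref{ellks}, which implicitly enforces $s_i\in\{k_i,k_i+1,k_i+2\}$, while the admissibility relations of $\TF$ simultaneously enforce $s_i\neq k_i$ for $i\ges 2$.
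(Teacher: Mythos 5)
Your proof is correct and supplies exactly the bookkeeping the paper has in mind: the paper gives no argument beyond declaring the lemma ``straightforward to obtain,'' and your three steps (canonical form of the nonzero monomials of $\TF$, regrouping \eqref{def Ck} by the shift $s=\deg_2\bm$, and matching the resulting multiplicities with \eqref{uks}) are precisely the missing details. One small slip: the relations in \eqref{algebraF} kill every product of the form $X_{i,r}X_{j,j}$, i.e.\ any product with a diagonal generator as the \emph{right} factor, so $X_{j,j}$ can occur only in the \emph{leftmost} position of a nonzero monomial --- not the rightmost, as you write. Since you immediately state the correct restriction ($s_1$ may equal $k_1$ while $s_i\neq k_i$ for all $i\ges 2$), this slip does not propagate, but it should be fixed. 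It is also worth making explicit one point your identification handles only implicitly: if the index set of \eqref{uks} were read as allowing $(k_1,s_1)=(0,0)$ in a tuple with $j\ges 2$, the factor $\ell_{0,0}=1$ would produce a double count (e.g.\ $u_{1,2}$ would come out as $2\ell_{1,2}$ rather than $\ell_{1,2}$). Your bijection with monomials excludes such tuples automatically, since $X_{0,0}=1$ absorbs into the shorter monomial; this is the intended reading of \eqref{uks}, consistent with the $\CL(t,z)-1$ appearing in the proof of \Cref{cor: Poi}, and stating it explicitly would make the multiplicity count airtight.
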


For example, the $0$-th vertical layer $\TF_0=1$ , the $1$-st vertical layer $\TF_1 = \Span_\kk \{X_{1,1},X_{1,2},X_{1,3}\}$, and the $2$-nd vertical layer $\TF_2 = \Span_\kk \{X_{2,2},X_{2,3},X_{2,4}, X_{1,1}X_{1,2}, X_{1,2}X_{1,2}, X_{1,3}X_{1,2}, X_{1,1}X_{1,3}, X_{1,2}X_{1,3}, X_{1,3}X_{1,3}\}$ correspond respectively to the following complexes: 
\begin{align*}
\sC^{(0)}&\coloneqq K^{\ell_{0,0}}, \\
\sC^{(1)}&\coloneqq \Sigma K^{\ell_{1,1}}\oplus\Sigma^2 K^{ \ell_{1,2}}\oplus \Sigma^3K^{\ell_{1,3}},\\
\sC^{(2)}&\coloneqq\Sigma^2 K^{\ell_{2,2}}\oplus\Sigma^3 K^{\ell_{2,3}}\oplus \Sigma^4K^{\ell_{2,4}}\\
    &\oplus\left(\Sigma^3 K^{\ell_{1,1}}\oplus\Sigma^4 K^{\ell_{1,2}}\oplus \Sigma^5K^{\ell_{1,3}}\right)^{\ell_{1,2}}
     \oplus\left(\Sigma^4 K^{\ell_{1,1}}\oplus\Sigma^5 K^{\ell_{1,2}}\oplus \Sigma^6K^{\ell_{1,3}}\right)^{\ell_{1,3}}.
\end{align*}
Similarly, we get the following:
\begin{align*}
\sC^{(3)}&\coloneqq\Sigma^3 K^{\ell_{3,3}}\oplus\Sigma^4 K^{\ell_{3,4}}\oplus \Sigma^5K^{\ell_{3,5}}\\
     &\oplus\left(\Sigma^4 K^{\ell_{1,1}}\oplus\Sigma^5 K^{\ell_{1,2}}\oplus \Sigma^6K^{\ell_{1,3}}\right)^{\ell_{2,3}}
    \oplus\left(\Sigma^5 K^{\ell_{1,1}}\oplus\Sigma^6 K^{\ell_{1,2}}\oplus \Sigma^7 K^{\ell_{1,3}}\right)^{\ell_{2,4}}\\
    &\oplus\left(\Sigma^4 K^{\ell_{2,2}}\oplus\Sigma^5 K^{\ell_{2,3}}\oplus \Sigma^6 K^{\ell_{2,4}}\right)^{\ell_{1,2}}\\
    &\oplus\left(\Sigma^5 K^{\ell_{1,1}}\oplus\Sigma^6 K^{\ell_{1,2}}\oplus \Sigma^7 K^{\ell_{1,3}}\right)^{(\ell_{1,2})^2}
      \oplus\left(\Sigma^6 K^{\ell_{1,1}}\oplus\Sigma^7 K^{\ell_{1,2}}\oplus \Sigma^8 K^{\ell_{1,3}}\right)^{\ell_{1,3}\ell_{1,2}}\\
       &\oplus\left(\Sigma^5 K^{\ell_{2,2}}\oplus\Sigma^6 K^{\ell_{2,3}}\oplus \Sigma^7 K^{\ell_{2,4}}\right)^{\ell_{1,3}}\\
       &\oplus\left(\Sigma^6 K^{\ell_{1,1}}\oplus\Sigma^7 K^{\ell_{1,2}}\oplus \Sigma^8 K^{\ell_{1,3}}\right)^{\ell_{1,2}\ell_{1,3}}
        \oplus\left(\Sigma^7 K^{\ell_{1,1}}\oplus\Sigma^8 K^{\ell_{1,2}}\oplus \Sigma^9 K^{\ell_{1,3}}\right)^{(\ell_{1,3})^2}.
\end{align*}
Observe that given this arrangement, each $\sC^{(k)}$ has $3^k$ terms, with each term corresponds to a monomial of first degree $k$ in $\TF$.

\medskip

Second, for each $k\ges 1$, using the above arrangements of $\sC^{(k)}$, we construct a map $\varphi^{(k)}\colon \sC^{(k)} \to \sC^{(k-1)}$. To each group of three arrows defined in \eqref{arrow}, we associate a map 
\[
    \begin{pmatrix} e_{i,i}(\bn) &  e_{i,i+1}(\bn) & e_{i,i+2}(\bn) \end{pmatrix} \mapsto (\Delta_i)^{\deg_3 \bn}, \text{ for all } i \ges 1,
\]
where $\bn$ is a monomial only given in terms of $X_{i,i+1}$ and $X_{j,j+2}$ for some $i,j \ges 0$ and $\Delta_i$ is defined in \eqref{def delta}. We define each chain map $\varphi^{(k)}$ recursively as blocks of maps: 
\begin{equation}
\label{matrix phi}
    \varphi^{(1)} \coloneqq \Delta_1 \qquad \text{ and } \qquad 
\varphi^{(k+1)} \coloneqq\begin{pmatrix}
\Delta_{k+1}&0&0&0&0\\
0&\Delta_1^{\ell_{k,k+1}}&0&0&0\\
0&0&\Delta_1^{\ell_{k,k+2}}&0&0\\
0&0&0&(\varphi^{(k)})^{\ell_{1,2}}&0\\
0&0&0&0&(\varphi^{(k)})^{\ell_{1,3}}\\
\end{pmatrix}.
\end{equation}

For example, we have the following 
\begin{align*}
\varphi^{(2)}&=\begin{pmatrix} 
\Delta_2&0&0\\
0&\Delta_1^{\ell_{1,2}}&0\\
0&0&\Delta_1^{\ell_{1,3}}
\end{pmatrix} 
\quad 
\text{and}\\[0.05in]
\varphi^{(3)}&=\begin{pmatrix} 
\Delta_3&0&0&0&0&0&0&0&0\\
0&\Delta_1^{\ell_{2,3}}&0&0&0&0&0&0&0\\
0&0&\Delta_1^{\ell_{2,4}}&0&0&0&0&0&0\\
0&0&0&\Delta_2^{\ell_{1,2}}&0&0&0&0&0\\
0&0&0&0&\Delta_1^{(\ell_{1,2})^2}&0&0&0&0\\
0&0&0&0&0&\Delta_1^{\ell_{1,3}\ell_{1,2}}&0&0&0\\
0&0&0&0&0&0&\Delta_2^{\ell_{1,3}}&0&0\\
0&0&0&0&0&0&0&\Delta_1^{\ell_{1,2}\ell_{1,3}}&0\\
0&0&0&0&0&0&0&0&\Delta_1^{(\ell_{1,3})^2}
\end{pmatrix}.
\end{align*}

%%%%%%%%%%%%%%%%%%%%%%%%%%%%%%%
\subsection{A graded minimal resolution of $\kk$ over $A$}
\label{graded res over A}

In this subsection, we show that the sequences of complexes $\sC^{(k)}$ and of maps $\varphi^{(k)}$ constructed in \Cref{Candphi} indeed give us the setting for \Cref{graded res} and give rise to a graded minimal resolution of $\kk$ over $A$ for class $\clT$.

\begin{theorem}
\label{res k over A class T}
Let $(R,\fm, \kk)$ be a local ring of codepth 3 and of class $\clT$, $A$ its Koszul homology which we consider as a graded-commutative ring, and retain the notation in \Cref{Candphi}. For all $k \ges 1$, set $\partial^\BF_k = [\varphi^{(k)}]$. Then, a graded minimal resolution of the residue field $\kk$ over $A$  has the form:
\begin{equation*}
\label{curlyF}
    \BF\colon \dots\to \bigoplus_{s=k}^{3k} A^{u_{k,s}}(-s)\xra{\partial_k^\BF} \bigoplus_{s=k-1}^{3(k-1)} A^{u_{k-1,s}}(-s)\xra{\partial_{k-1}^\BF}\dots \to \bigoplus_{s=1}^{3} A^{u_{1,s}}(-s) \xra{\partial_{1}^\BF} A,
\end{equation*}
where $u_{k,s}$ is defined in \eqref{uks} for all $k,s$.
\end{theorem}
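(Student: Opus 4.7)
My plan is to establish three properties of $\BF$: that it is a complex of free $A$-modules, that it is minimal, and that $H_0(\BF)=\kk$ while $H_k(\BF)=0$ for $k\ges 1$. The first two are direct verifications; the third is the heart of the matter.

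By \Cref{alpha chain}, each $\varphi^{(k)}$ is genuinely a chain map, so $[\varphi^{(k)}]$ is a well-defined $A$-linear map between the graded free $A$-modules $\bigoplus_s A^{u_{k,s}}(-s)$. Minimality is immediate from the block-diagonal form in \eqref{matrix phi}: every nonzero entry of $\partial^\BF_k$ is a class coming from some $\alpha_{k,r}$, hence lies in $A_1\cup A_2\cup A_3\subset A_{\ges 1}$, the graded maximal ideal of $A$. For $H_0(\BF)=\kk$, since $\partial^\BF_1=[\Delta_1]$ has entries $[z^1_1],\dots,[z^1_{a_1}],[z^2_1],\dots,[z^2_{a_2-3}],[z^3_1],\dots,[z^3_{a_3}]$, its image equals $A_{\ges 1}$. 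To verify the complex condition $\partial^\BF_{k-1}\partial^\BF_k=0$, I would decompose $\varphi^{(k-1)}\circ\varphi^{(k)}$ block by block using \eqref{matrix phi}: seed-to-seed compositions reduce on homology to products $[\alpha_{j,r}]\circ[\alpha_{j+1,r'}]$, which vanish because the $\CA$-sequences in \Cref{thm:exact A} are themselves complexes; cross-block compositions vanish because the blocks have disjoint sources or targets; and the nested blocks $(\varphi^{(k-1)})^{\ell_{1,2}}$ and $(\varphi^{(k-1)})^{\ell_{1,3}}$ are handled by induction on $k$.

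The main obstacle is exactness of $\BF$ in positive homological degrees, and here I would exploit the acyclic complex $\BA$ from \Cref{cor:acyclic A} via the tree of \Cref{fig:tree}. Every non-seed monomial $\bn$ in $\TF$, built from $X_{i,i+1}$'s and $X_{j,j+2}$'s, spawns a subtree isomorphic to the whole tree, shifted by $(\deg_1\bn,\deg_2\bn)$ and replicated with multiplicity $\deg_3\bn$; the main seed corresponds to $\bn=1$. I expect $\BF$ to decompose, as a complex of graded $A$-modules, into a direct sum of shifted, replicated copies of $\BA$, one for each such spawning $\bn$. Since every shifted copy of $\BA$ is acyclic in positive degrees by \Cref{cor:acyclic A}, the same will hold for $\BF$; meanwhile the $\kk$-summands of one copy of $\BA$ are precisely resolved by the $A$-free pieces of deeper subtrees, leaving only the single $\kk=H_0(\BA)$ contribution from the root. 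The delicate combinatorial point is to check that the recursion \eqref{ell}--\eqref{elldprime} (equivalently, the generating-function identity in \Cref{rmk T}) matches the total rank contributions of all shifted $\BA$-pieces to each $\BF_k$ precisely on the nose.
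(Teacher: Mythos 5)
Your overall strategy---reduce everything to the acyclicity of $\BA$ from \Cref{cor:acyclic A} via the tree of \Cref{fig:tree}---is the same as the paper's, and your treatment of well-definedness (via \Cref{alpha chain} and \Cref{lem Ck}), minimality, and $\HH{0}{\BF}\cong\kk$ is fine. But the central step of your exactness argument, the claim that $\BF$ ``decomposes, as a complex of graded $A$-modules, into a direct sum of shifted, replicated copies of $\BA$,'' is false, and in fact contradicts the conclusion you are trying to reach. A shifted copy $\Sigma^{\deg_1\bn}\BA(-\deg_2\bn)^{\deg_3\bn}$ spawned by a monomial $\bn$ with $\deg_1\bn\ges 1$ carries nonzero homology ($\cong\kk^{\deg_3\bn}$) in positive homological degree $\deg_1\bn$; if $\BF$ were a direct sum of such pieces it could not be acyclic. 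Your own next sentence---that the $\kk$-summands of one copy of $\BA$ are ``resolved by the $A$-free pieces of deeper subtrees''---already requires nonzero differentials \emph{between} the copies (namely $[\alpha_i']$ and $[\alpha_i'']$ precomposed with augmentations), which is exactly what a direct sum forbids. The two halves of your argument are in tension, and neither one alone delivers exactness.

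What the paper does instead is make the ``splicing'' precise without ever claiming a splitting: starting from the augmentation quasi-isomorphism $\varepsilon\colon\BA\to\kk$, it builds a tower $\varepsilon^{(j+1)}\colon\BA^{(j+1)}\xra{\simeq}\BA^{(j)}$ in which $\BA^{(j+1)}$ is obtained from $\BA^{(j)}$ by replacing each residual $\kk$-summand with a copy of $\BA$ mapped in through $\varepsilon$ (so the connecting maps are $[\alpha_i'\circ\varepsilon^{\ell_i'}]$, $[\alpha_i''\circ\varepsilon^{\ell_i''}]$). Each replacement is a quasi-isomorphism, $\BF_{\les k}=\BA^{(k+1)}_{\les k}$, and hence $\HH{i}{\BF}\cong\HH{i}{\BA}=0$ for $i\ges 1$. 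This also disposes of your block-by-block verification that $\partial^\BF_{k-1}\partial^\BF_k=0$ and of the ``delicate combinatorial point'' about matching ranks: both are automatic once $\BF_{\les k}$ is identified with $\BA^{(k+1)}_{\les k}$, since \Cref{lem Ck} and the definition \eqref{uks} of $u_{k,s}$ are just the bookkeeping of the monomials $\bm$ in $\TF$. To repair your write-up, replace the direct-sum claim with this iterated quasi-isomorphic replacement (equivalently, view $\BF$ as an iterated extension of shifted copies of $\BA$, filtered by $\deg_4\bm$, whose associated graded pieces are the shifted copies you describe); the rest of your outline then goes through.
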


\begin{proof} 
It follows immediately from \Cref{alpha chain} that for each $k\ges 1$, the map $\varphi^{(k)}\colon \sC^{(k)} \to \sC^{(k-1)}$ is a chain map. Thus, $[\varphi^{(k)}]\colon \HH{}{\sC^{(k)}}\to \HH{}{\sC^{(k-1)}}$ is well-defined. By \Cref{lem Ck}, using that the homological shift of $K$ in $\sC^{(k)}$ becomes the internal grading shift of $A$ as a graded $\kk$-algebra, we get \(\BF_k\coloneqq \bigoplus_{s=k}^{3k} A^{u_{k,s}}(-s)\cong \HH{}{\sC^{(k)}}\), thus 
$\partial^\BF_k$ is well-defined.

To show that $\BF$ described in the statement is an acyclic complex, we give it a different description. Let $\varepsilon\colon \BA\to \kk$ be a quasi-isomorphism obtained from \Cref{cor:acyclic A} with $\varepsilon_i=0$ for all $i>0$ and $\varepsilon_0=\id{\kk}$. In \Cref{Candphi}, we associate the components of $\sC^{(k)}$ and $\varphi^{(k)}$ to the nodes (monomials in $\TF$) and arrows of the directed tree in \Cref{fig:tree} respectively. One can see that $\BF$ can be obtained from the complex $\BA$ given in \eqref{BA}, which corresponds to the seed \eqref{seed} of the tree, by iteratively replacing $\kk$ in \eqref{BA} by the complex $\BA$ via the quasi-isomorphism $\varepsilon$. More precisely, we obtain a sequence of quasi-isomorphisms
\[ \varepsilon^{(j+1)}\colon \BA^{(j+1)}\xra{\simeq} \BA^{(j)}, \qquad\text{for all }j\ges 0,  \]
where $\varepsilon^{(1)}\coloneqq\varepsilon$, $\BA^{(0)}\coloneqq \kk$, $\BA^{(1)} \coloneqq \BA$, and for example, $\BA^{(2)}$ is given by
\begin{equation*} 
 \xymatrixrowsep{3.3pc}
 \xymatrixcolsep{3.7pc}
 \xymatrix{
 \dots\ar@{->}[r]^{}
 &A^{\ell_{i}}(-i) \ar@{->}[r]^{[\alpha_{i}]}\ar@{}[d]_{\bigoplus}
 &A^{\ell_{i-1}}(-i+1)\ar@{->}[r]^{[\alpha_{i-1}]}\ar@{}[d]_{\bigoplus}
 &\dots
 &A^{\ell_{1}}(-1)\ar@{->}[r]^{[\alpha_{1}]}\ar@{}[d]_{\bigoplus}
 &A^{\ell_0}.
 \\  
 \dots\ar@{->}[ru]^{}       
 &\BA^{\ell'_{i}}(-i-1)\ar@{->}[ru]|{[\alpha'_{i}\circ \varepsilon^{\ell_i'}]}\ar@{}[d]_{\bigoplus}
 &\BA^{\ell'_{i-1}}(-i)\ar@{->}[ru]|{[\alpha'_{i-1}\circ \varepsilon^{\ell_{i-1}'}]}\ar@{}[d]_{\bigoplus}
 &\dots 
 &\BA^{\ell'_{1}}(-2)\ar@{->}[ru]|{[\alpha'_{1} \circ\varepsilon^{\ell_1'}]}\ar@{}[d]_{\bigoplus}
 &
  \\   
 \dots \ar@{->}[ruu]^{}        
 &\BA^{\ell''_{i}}(-i-2)\ar@{->}[ruu]|{[\alpha''_{i}\circ \varepsilon^{\ell_i''}]}
 &\BA^{\ell''_{i-1}}(-i-1)\ar@{->}[ruu]|{[\alpha''_{i-1}\circ \varepsilon^{\ell_{i-1}''}]} 
 &\dots
 &\BA^{\ell''_{1}}(-3)\ar@{->}[ruu]|{[\alpha''_{1}\circ\varepsilon^{\ell_1''}]}
 &
 } 
\end{equation*}
The $k$-th homological component of each $\BA^{(j)}$ is given by:
\begin{equation*}
    \BA^{(j)}_k \coloneqq  \left(\bigoplus_{\substack{\deg_1\bm = k\\ \deg_4\bm =j}} A^{\deg_3\bm}(-\deg_2\bm) \right) \oplus \left(\bigoplus_{\substack{\deg_1\bn = k\\ \deg_4\bn =j}} \kk^{\deg_3\bn}(-\deg_2\bn)\right),
\end{equation*}
where $\bm$ is a monomial in $\TF$ that contains $X_{i,i}$ and $\bn$ is a monomial in $\TF$ that does not contain $X_{i,i}$. 
In fact, for all $k \ges 1$ we have  $\BF_{\les k} = \BA_{\les k}^{(k+1)}$ by reading the vertical components. 
First of all, this shows that $\BF$ is a complex of graded $A$-modules. Second, for all $i\ges 1$, by construction via the quasi-isomorphisms, we have:
\[\HH{i}{\BF}= \HH{i}{\BA^{(k+1)}}\cong\HH{i}{\BA}\quad\text{for all}\ k \ges i+1.\]   
 By \Cref{cor:acyclic A}, $\BA$ is an acyclic complex of graded $A$-modules, so $\BF$ is also acyclic.
 The minimality of $\BF$ follows since the differential maps $\partial_k^\BF$ all have entries in $A_{\ges 1}$. Therefore, $\BF$ is a graded minimal resolution of $\kk$ over $A$ of the desired format.
\end{proof}

\begin{remark} 
The first few components of $\BF$ are: 
\begin{align*}
\BF_0&\cong A^{\ell_{0,0}} \\
\BF_1&\cong A^{\ell_{1,1}}(-1)\oplus A^{\ell_{1,2}}(-2)\oplus A^{\ell_{1,3}}(-3)\\
\BF_2&\cong A^{\ell_{2,2}}(-2)\oplus A^{\ell_{2,3}}(-3)\oplus A^{\ell_{2,4}}(-4)\\
 &\oplus \left(A^{\ell_{1,1}}(-3)\oplus A^{\ell_{1,2}}(-4)\oplus A^{\ell_{1,3}}(-5)\right)^{\ell_{1,2}} \oplus
\left(A^{\ell_{1,1}}(-4)\oplus A^{\ell_{1,2}}(-5)\oplus A^{\ell_{1,3}}(-6)\right)^{\ell_{1,3}}\\
\BF_3 &\cong A^{\ell_{3,3}}(-3)\oplus A^{\ell_{3,4}}(-4)\oplus A^{\ell_{3,5}}(-5)\\
      &\oplus\left(A^{\ell_{1,1}}(-4)\oplus A^{\ell_{1,2}}(-5)\oplus A^{\ell_{1,3}}(-6)\right)^{\ell_{2,3}}
        \oplus\left(A^{\ell_{1,1}}(-5)\oplus A^{\ell_{1,2}}(-6)\oplus A^{\ell_{1,3}}(-7)\right)^{\ell_{2,4}}\\
      &\oplus  \left(A^{\ell_{2,2}}(-4)\oplus A^{\ell_{2,3}}(-5)\oplus A^{\ell_{2,4}}(-6)\right)^{\ell_{1,2}}\\
      &\oplus \left(A^{\ell_{1,1}}(-5)\oplus A^{\ell_{1,2}}(-6)\oplus A^{\ell_{1,3}}(-7)\right)^{(\ell_{1,2})^2}
       \oplus\left(A^{\ell_{1,1}}(-6)\oplus A^{\ell_{1,2}}(-7)\oplus A^{\ell_{1,3}}(-8)\right)^{\ell_{1,3}\ell_{1,2}}\\
      &\oplus \left(A^{\ell_{2,2}}(-5)\oplus A^{\ell_{2,3}}(-6)\oplus A^{\ell_{2,4}}(-7)\right)^{\ell_{1,3}}\\
      &\oplus \left(A^{\ell_{1,1}}(-6)\oplus A^{\ell_{1,2}}(-7)\oplus A^{\ell_{1,3}}(-8)\right)^{\ell_{1,2}\ell_{1,3}}
      \oplus \left(A^{\ell_{1,1}}(-7)\oplus A^{\ell_{1,2}}(-8)\oplus A^{\ell_{1,3}}(-9)\right)^{(\ell_{1,3})^2}. 
\end{align*}
When we view $\BF_k=\bigoplus_{s=k}^{3k} A^{u_{k,s}}(-s)$ by grouping the internal grading shifts, we then obtain the following expressions for $u_{k,s}$ in terms of $\ell_{k,r}$, which for practicality we write in terms of $\ell_i, \ell_i', \ell_i''$ here; see \eqref{ellks}. Note that $u_{k,s}=0$ for all $s>3k$ and $s<k$.  
\begin{align*}
u_{1,1}&=\ell_1,   &u_{2,2}&=\ell_2 ,              &u_{3,3}&=\ell_3, \\   
u_{1,2}&=\ell_1',  &u_{2,3}&=\ell_2'+\ell_1\ell_1',& u_{3,4}&=\ell_3'+\ell_1\ell_2'+\ell_2\ell_1'\\        
u_{1,3}&=\ell_1'', &u_{2,4}&=\ell_2''+ (\ell_1')^2+\ell_1\ell_1'',&u_{3,5}&=\ell_3''+\ell_1(\ell_1')^2+\ell_1\ell_2''+2\ell_1'\ell_2'+\ell_2\ell_1'',\\
              &&u_{2,5}&=2\ell_1'\ell_1'',    &u_{3,6}&=2\ell_1\ell_1'\ell_1'' + 2\ell_1'\ell_2'' + 2\ell_1''\ell_2' + (\ell_1')^3,\\ 
              &&  u_{2,6}&=(\ell_1'')^2, & u_{3,7}&=3(\ell_1')^2\ell_1'' + 2\ell_1''\ell_2'' + \ell_1(\ell_1'')^2,\\
              &&                         && u_{3,8}&=3\ell_1'(\ell_1'')^2\\
              &&                         && u_{3,9}&=(\ell_1'')^3.
\end{align*}
One could further express these values in terms of the invariants $a_1,a_2,a_3$ by using \Cref{ellex}.
\end{remark}

We recover the Poincar\'e series for class $\clT$, which was given in \cite[Theorem 2.1]{Av2}.

\begin{corollary} 
\label{cor: Poi}
Let $(R,\fm, \kk)$ be a local ring of embedding dimension $n$, codepth 3 and of class $\clT$, $A$ its Koszul homology, and retain the above notation. Then,
\begin{align*}
\Poi{A}{\kk}(t,z)&=\frac{1}{1-a_1tz-(a_2-3)tz^2+3t^2z^2-a_3tz^3-t^2z^3-t^3z^3}\quad\text{and}\\
\Poi{R}{\kk}(t)&=\frac{(1+t)^n}{1-a_1t^2-(a_2-3)t^3-(a_3-3)t^4-t^5-t^6}.
\end{align*}
\end{corollary}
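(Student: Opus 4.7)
The plan is to read the generating series $\Poi{A}{\kk}(t,z)$ directly off the tree in \Cref{fig:tree} via \Cref{res k over A class T} and \Cref{lem Ck}, and then deduce $\Poi{R}{\kk}(t)$ from \Cref{Poi cor}. By \Cref{lem Ck}, each Betti number $u_{k,s}$ is the sum over monomials $\bm=\prod_{i=1}^{j}X_{k_i,s_i}$ in $\TF$ with $\deg_1\bm=k$ and $\deg_2\bm=s$ of the product $\prod_i \ell_{k_i,s_i}$, where the first factor satisfies $k_1\les s_1\les k_1+2$ and every subsequent factor satisfies $k_i<s_i\les k_i+2$. Assigning weight $\ell_{k,s}t^k z^s$ to each indeterminate $X_{k,s}$ and summing over all such monomials (including the empty one with weight $1$) yields
\begin{equation*}
\Poi{A}{\kk}(t,z)\;=\;1+\frac{P(t,z)}{1-Q(t,z)},
\end{equation*}
where $P$ records the admissible weights in the first position and $Q$ the admissible weights in every subsequent position, namely
\begin{equation*}
P(t,z)=\sum_{k\ges 1}\bigl(\ell_k(tz)^k+\ell_k'\,z(tz)^k+\ell_k''\,z^2(tz)^k\bigr),\qquad Q(t,z)=\sum_{k\ges 1}\bigl(\ell_k'\,z(tz)^k+\ell_k''\,z^2(tz)^k\bigr).
\end{equation*}

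Next, I would evaluate $P$ and $Q$ using the generating functions recorded in \Cref{rmk T}. Setting $u=tz$ and $D(u)\coloneqq(1-u)^3-u(a_1-3)=1-a_1 u+3u^2-u^3$, those identities give
\begin{equation*}
\sum_{k\ges 1}\ell_k u^k=\frac{1-D(u)}{D(u)},\qquad \sum_{k\ges 1}\ell_k' u^k=\frac{u^2+u(a_2-3)}{D(u)},\qquad \sum_{k\ges 1}\ell_k'' u^k=\frac{u\,a_3}{D(u)}.
\end{equation*}
Substituting and clearing denominators produces explicit rational expressions for $P$ and $1-Q$ over the common denominator $D(tz)$; a direct term-by-term bookkeeping shows that adding $1$ to $P/(1-Q)$ cancels every term in the numerator of $1-Q$ except the constant $1$, yielding
\begin{equation*}
\Poi{A}{\kk}(t,z)\;=\;\frac{1}{1-a_1tz-(a_2-3)tz^2+3t^2z^2-a_3tz^3-t^2z^3-t^3z^3}.
\end{equation*}

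Finally, specializing $z=t$ and applying \Cref{Poi cor} gives the expression for $\Poi{R}{\kk}(t)$, after collecting the two $t^4$ terms into $-(a_3-3)t^4$. The only nontrivial step is the combinatorial one, namely recognizing that the constraint ``$k_i<s_i$ for $i\ges 2$'' precisely corresponds to excluding the seed-edges $X_{k,k}$ from every non-leading position, so that $P$ and $Q$ differ only in the $\ell_k(tz)^k$ summand; this is the whole point of the tree picture in \Cref{Candphi} and makes the calculation a routine generating-function exercise once set up correctly.
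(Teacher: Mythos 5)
Your argument is correct and is essentially the paper's own proof in slightly different packaging: the paper also sums the monomial weights $\prod_i\ell_{k_i,s_i}t^{k_i}z^{s_i}$ from \eqref{uks}, separates the leading factor (where $X_{k,k}$ is allowed) from the subsequent ones, sums the resulting geometric series, and your identity $1+P/(1-Q)$ with $P=\CL(t,z)-1$, $Q=\CL(t,z)-f(tz)$ reduces to the paper's $f(tz)/\bigl(1-\CL(t,z)+f(tz)\bigr)$ before the same evaluation via \Cref{rmk T} and the final application of \Cref{Poi cor}. No gaps; the computation checks out.
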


\begin{proof}
Let $\CL(t,z) \coloneqq \sum_{k,s\ges 0} \ell_{k,s}t^kz^s$, where $\ell_{k,s}$ is defined in \eqref{ellks} for $k,s\ges 0$. By \Cref{rmk T} and \eqref{ellks}, the generating functions of the sequences
$\{\ell_{k,k}\}_{k\ges 0},\ \{\ell_{k,k+1}\}_{k\ges 0},\ \text{and}\ \{\ell_{k,k+2}\}_{k\ges 0}$ are respectively   
\[f(t)\coloneqq\frac{1}{(1-t)^3-t(a_1-3)},\quad g(t)\coloneqq\frac{t^2+t(a_2-3)}{(1-t)^3-t(a_1-3)},\text{ and} \quad h(t)\coloneqq\frac{ta_3}{(1-t)^3-t(a_1-3)}.\] 
Therefore, we obtain
\begin{equation}
\tag{$\star$}
\CL(t,z) = f(tz)+zg(tz)+z^2h(tz).
\end{equation}
By \Cref{gr Poi}, \Cref{res k over A class T}, and definition of $u_{k,s}$ in \eqref{uks}, we have
\begin{align*}
   P^A_\kk(t,z)&=\sum_{k,s\ges 0} \Bigg(\sum_{\substack{k_1+\dots+k_j=k\\ s_1+\dots+s_j=s\\  s_i\neq k_i, \forall i\ges 2}}\ell_{k_1,s_1}\dots \ell_{k_j,s_j} \Bigg) t^kz^s\\
    &= \sum_{k,s\ges 0} \ell_{k,s}t^kz^s + \sum_{k,s\ges 0}
    \sum_{\substack{ j\ges 2\\
                    k_1+\dots+k_j=k\\ 
                    s_1+\dots+s_j=s\\  
                    s_i\neq k_i, \forall i\ges 2}
                    }\left(\ell_{k_1,s_1}t^{k_1}z^{s_1}\right)\dots \left(\ell_{k_j,s_j}t^{k_j}z^{s_j}\right) \\
   &=\CL(t,z) + \sum_{j\ges 2} \big(\CL(t,z)-1\big) \big(\CL(t,z)-f(tz)\big)^{j-1}\\
   &=\CL(t,z) + \big(\CL(t,z)-1\big) \sum_{j\ges 1} \big(\CL(t,z)-f(tz)\big)^{j}\\
    &=\CL(t,z) + \big(\CL(t,z)-1\big) \left(\frac{1}{1-\CL(t,z)+f(tz)}-1\right)\\
    &=\frac{\CL(t,z)\big(1-\CL(t,z)+f(tz)\big) + \big(\CL(t,z)-1\big) \big(\CL(t,z)-f(tz)\big)}
          {1-\CL(t,z)+f(tz)}\\
    &=\frac{f(tz)}{1-\CL(t,z)+f(tz)}\\
    &\overset{\text{by } (\star)}{=}\frac{f(tz)}{1-zg(tz)-z^2h(tz)}\\
    &=\frac{1}{(1-tz)^3-tz(a_1-3)-z\big((tz)^2+tz(a_2-3)\big)-z^2(tz)a_3}\\
    &=\frac{1}{1-a_1tz-(a_2-3)tz^2+3t^2z^2-a_3tz^3-t^2z^3-t^3z^3}.
\end{align*}
In the second equality above, we separate the terms that have exactly one factor ($j=1$) and those that have more than one factor ($j \ges 2$). The third equality follows from the definition of $\CL(t,z)$, the first factor in the sum comes from $\displaystyle \CL(t,z)= 1+\sum_{k_1\ges 1}\ell_{k_1,s_1}t^{k_1}z^{s_1}$, as  $\ell_{0,0}t^0z^0 = 1$, and for $i\ges 2$ the remaining factors in the sum come from $\displaystyle \CL(t,z) = f(tz)+ \sum_{s_i\not=k_i\ges 1}\ell_{k_i,s_i}t^{k_i}z^{s_i}$, as $ f(tz)= \displaystyle \sum_{k_i\ges 1}\ell_{k_i,k_i}t^{k_i}z^{k_i}$. The remaining equalities are straightforward. 

The expression for $\Poi{R}{\kk}(t)$ follows from that of $\Poi{A}{\kk}(t,z)$ and \Cref{Poi cor}.
\end{proof}

%%%%%%%%%%%%%%%%%%%%%%%%%%%%%%%
\subsection{A minimal free resolution of $\kk$ over $R$}
\label{res over R}

In \cite{NV}, for any local ring $R$ the authors describe a truncated resolution of $\kk$ as an $R$-module up to homological degree 5. Using the descriptions of $\sC^{(k)}$ and $\varphi^{(k)}$ from \Cref{Candphi} and \Cref{cor: res over R}, we now describe the entire minimal free resolution $(F,\partial^F)$ of $\kk$ over a local ring $R$ of codepth 3 and of class $\clT$. 

\begin{theorem}
\label{res k over R class T}
  Let $(R,\fm, \kk)$ be a local ring of codepth 3 and of class $\clT$, $(K,\partial^K)$ the Koszul complex of $R$ on a minimal set of generators of $\fm$, and retain the above notation. Then, a minimal free resolution $F$ of the residue field $\kk$ over $R$ is given as follows. 
  \begin{enumerate}[\((a)\)]
      \item For each $k \ges 0$, the $k$-th homological component is \[F_k \cong \bigoplus_{i=0}^k \Bigg(\bigoplus_{\substack{\deg_1\bm+\deg_2\bm}=k-i}K^{\deg_3\bm}_i\Bigg),\]
where $\bm$ is a monomial of $\TF$ as in \eqref{algebraF}, with different degrees as defined in \eqref{deg}. 
\item For each $k \ges 0$, the differential maps $\partial_k^F$ consists of maps described below. If we write  $\bm= X_{j-1,j-1}\bn$ for some $j\ges 1$, where $\bn$ is a monomial of $\TF$ that does not contain $X_{u,u}$ for any $u \ges 1$, and consider $K_i^{\deg_3\bm}$ a component of $F_{k-1}$, then all the maps going into $K_i^{\deg_3\bm}$ have the following form:

\begin{equation*}  
 \xymatrixrowsep{3pc}
 \xymatrixcolsep{5pc}
 \xymatrix{
  K_{i+1}^{\deg_3\bm}\ar@{->}[r]^{(\partial_{i+1}^K)^{\deg_3\bm} }\ar@{}[d]_{\bigoplus}
 &K_i^{\deg_3\bm}, \\ 
(K_{i-1}^{\ell_{j}})   ^{\deg_3\bn}\ar@{->}[ru]  |{(\alpha_{j})   ^{\deg_3\bn}}\ar@{}[d]_{\bigoplus}&\\
(K_{i-2}^{\ell_{j}'}) ^{\deg_3\bn}\ar@{->}[ruu] |{(\alpha_{j}') ^{\deg_3\bn}}\ar@{}[d]_{\bigoplus}&\\
(K_{i-3}^{\ell_{j}''}) ^{\deg_3\bn}\ar@{->}[ruuu]|{(\alpha_{j}'') ^{\deg_3\bn}}&
}
\end{equation*}
where the sequences of integers $\ell_j, \ell_j', \ell_j''$ 
and the matrices $\alpha_j, \alpha_j', \alpha_j''$ with entries in $K_1,K_2,K_3$,  respectively, are given in \Cref{def seq T}.
 \end{enumerate}
\end{theorem}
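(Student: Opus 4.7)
The plan is to apply \Cref{graded res} directly with the sequence of complexes $\{\sC^{(k)}\}_{k\ges 0}$ defined in \eqref{def Ck} together with the chain maps $\{\varphi^{(k)}\}_{k\ges 1}$ defined in \eqref{matrix phi}. By \Cref{res k over A class T}, these data induce a graded minimal resolution of $\kk$ over $A$, so \Cref{graded res} immediately yields that the iterated mapping cone $F$ is a minimal free resolution of $\kk$ over $R$. The remaining work is to translate the block-matrix description of $F$ furnished by \Cref{cor: res over R} into the monomial-indexed form stated in the theorem.

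For the components, \Cref{cor: res over R} gives $F_k = \bigoplus_{j=0}^{\lfloor k/2\rfloor} \sC^{(j)}_{k-j}$. Substituting the definition $\sC^{(j)} = \bigoplus_{\deg_1\bm = j}\Sigma^{\deg_2\bm}K^{\deg_3\bm}$ from \eqref{def Ck} and using $(\Sigma^{s}K^{u})_{k-j} = K_{k-j-s}^{u}$, the change of variable $i \coloneqq k - \deg_1\bm - \deg_2\bm$ produces the stated double sum, with the terms in which $K_i=0$ automatically restricting the range to $0\les i\les k$. For the differentials, \Cref{cor: res over R} writes $\partial_k^F$ as a block matrix whose main-diagonal blocks are the internal differentials of the $\sC^{(j)}$ and whose superdiagonal blocks are the chain maps $\varphi^{(j)}$. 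On each summand $\Sigma^{\deg_2\bm}K^{\deg_3\bm}$ the internal differential restricts to $(\partial^K)^{\deg_3\bm}$ (the shift contributing only an overall sign), producing the vertical arrow $(\partial^K_{i+1})^{\deg_3\bm}\colon K_{i+1}^{\deg_3\bm}\to K_i^{\deg_3\bm}$ of the statement.

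To identify the superdiagonal contributions, observe that the relations defining $\TF$ in \eqref{algebraF} force every monomial to admit the unique decomposition $\bm = X_{j-1,j-1}\bn$, where $\bn$ contains no $X_{u,u}$ factor (with $X_{0,0}=1$ covering the case where $\bm$ itself contains no such factor). The tree in \Cref{fig:tree} is designed precisely so that the three arrows entering the node $\bm$ come from $X_{j,j}\bn$, $X_{j,j+1}\bn$, $X_{j,j+2}\bn$, and the block of $\varphi^{(k)}$ realizing these arrows is $(\Delta_j)^{\deg_3\bn} = \bigl((\alpha_{j,j})^{\deg_3\bn}\,\big|\,(\alpha_{j,j+1})^{\deg_3\bn}\,\big|\,(\alpha_{j,j+2})^{\deg_3\bn}\bigr)$. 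By \Cref{alpha chain}, each $(\alpha_{j,r})^{\deg_3\bn}$ is a chain map $\Sigma^{1+r-j}K^{\ell_{j,r}\deg_3\bn}\to K^{\deg_3\bm}$, yielding on the Koszul degree $i$ component the arrow $K_{i-(r-j+1)}^{\ell_{j,r}\deg_3\bn}\to K_i^{\deg_3\bm}$, which matches the three slanted arrows of the theorem.

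The main technical point is the bookkeeping step of verifying that the recursion \eqref{matrix phi} distributes along the tree in exactly this way, namely that the block of $\varphi^{(k)}$ connecting the summand indexed by $X_{j,r}\bn$ in $\sC^{(k)}$ to the summand indexed by $X_{j-1,j-1}\bn$ in $\sC^{(k-1)}$ is precisely $(\alpha_{j,r})^{\deg_3\bn}$. This is proved by induction on $k$: the head block $\Delta_{k+1}$ covers the seed-adjacent nodes $X_{k+1,r}$ mapping into $X_{k,k}$ along \eqref{seed}, the two $\Delta_1$ blocks cover the newly created $X_{1,r}$-prefixed children sitting above $X_{k,k+1}$ and $X_{k,k+2}$, and the two copies of $\varphi^{(k)}$ inductively reproduce the subtrees rooted at the $X_{1,2}$- and $X_{1,3}$-labelled children of the root. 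The bookkeeping is notationally involved but routine once the correspondence between the blocks of $\varphi^{(k)}$ and the subtrees is fixed.
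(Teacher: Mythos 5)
Your proposal is correct and follows essentially the same route as the paper: invoke \Cref{graded res} together with \Cref{res k over A class T}, read off the components from \Cref{cor: res over R} via the change of variable $i=k-\deg_1\bm-\deg_2\bm$, and identify the diagonal and superdiagonal blocks with the Koszul differentials and the maps $(\alpha_{j,r})^{\deg_3\bn}$ coming from the tree. The only difference is that you spell out the inductive bookkeeping matching the blocks of the recursion \eqref{matrix phi} to the arrows of \Cref{fig:tree}, which the paper leaves implicit.
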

\begin{proof}
Remark that $\deg_3\bm=\ell_{j-1}\deg_3\bn$.
For all $k \ges 0$, we have
  \begin{align*}
      F_k = \bigoplus_{j=0}^{\lfloor\frac{k}{2}\rfloor}\sC^{(j)}_{k-j}&= \bigoplus_{j\ges 0}\sC^{(j)}_{k-j} = \bigoplus_{j\ges 0}\Big(\bigoplus_{\deg_1\bm=j}  \Sigma^{\deg_2\bm} K^{\deg_3\bm}\Big)_{k-j}= \bigoplus_{j\ges 0}\Big(\bigoplus_{\deg_1\bm=j}   K^{\deg_3\bm}_{k-j-\deg_2\bm}\Big),
  \end{align*} 
where $\bm$ is a monomial of $\TF$. The first equality follows from \Cref{cor: res over R}, the second equality follows from the fact that $\sC^{(j)}_{k-j}=0$ for all $k-j<j$, and the third equality follows from \eqref{def Ck}, and we apply the homological shift in the last equality.
Set $i=k-j-\deg_2\bm$, we obtain the desired expression for $F_k$.

The differential maps $\partial^F_k$ follows from \Cref{cor: res over R} and descriptions of $\sC^{(k)}$ and $\varphi^{(k)}$ in \Cref{Candphi}.
\end{proof}

\begin{remark}
\label{rmk:res}
We illustrate the resolution $F$ in terms of Koszul blocks, up to degree 7. By \Cref{cor: res over R},
\begin{align*}
F_0&=\sC^{(0)}_0&F_2&=\sC^{(0)}_2\oplus \sC^{(1)}_1&F_4&=\sC^{(0)}_4 \oplus \sC^{(1)}_3 \oplus \sC^{(2)}_2&F_6&=\sC^{(0)}_6 \oplus \sC^{(1)}_5 \oplus \sC^{(2)}_4 \oplus \sC^{(3)}_3\\
F_1&=\sC^{(0)}_1&F_3&=\sC^{(0)}_3\oplus \sC^{(1)}_2&F_5&=\sC^{(0)}_5 \oplus \sC^{(1)}_4 \oplus \sC^{(2)}_3&F_7&=\sC^{(0)}_7 \oplus \sC^{(1)}_6 \oplus \sC^{(2)}_5 \oplus \sC^{(3)}_4.
\end{align*}
Using \eqref{def Ck} and the labels in \eqref{ellks}, we describe the components of $F$ in terms of $K$ and $\ell_{k,s}$:
\begin{align*}
F_0&=K_0^{\ell_{0,0}}\\
F_1&=K_1^{\ell_{0,0}} \\
F_2&=K_2^{\ell_{0,0}}\oplus K_0^{\ell_{1,1}}\\
F_3&=K_3^{\ell_{0,0}}\oplus (K_1^{\ell_{1,1}}\oplus   K_0^{\ell_{1,2}})\\
F_4&=K_4^{\ell_{0,0}}\oplus (K_2^{\ell_{1,1}}\oplus K_1^{\ell_{1,2}} \oplus K_0^{\ell_{1,3}}) \oplus K_0^{\ell_{2,2}}\\
F_5&=K_5^{\ell_{0,0}}\oplus (K_3^{\ell_{1,1}}\oplus K_2^{\ell_{1,2}} \oplus K_1^{\ell_{1,3}}) \oplus (K_1^{\ell_{2,2}}\oplus K_0^{\ell_{2,3}} \oplus K_0^{\ell_{1,1}\ell_{1,2}}) \\
F_6&=K_6^{\ell_{0,0}}\oplus (K_4^{\ell_{1,1}}\oplus K_3^{\ell_{1,2}} \oplus K_2^{\ell_{1,3}}) \oplus (K_2^{\ell_{2,2}}\oplus K_1^{\ell_{2,3}} \oplus K_0^{\ell_{2,4}} \oplus K_1^{\ell_{1,1}\ell_{1,2}}\oplus K_0^{(\ell_{1,2})^2}\oplus K_0^{\ell_{1,1}\ell_{1,3}}) \oplus K_0^{\ell_{3,3}}\\
F_7&=K_7^{\ell_{0,0}}\oplus (K_5^{\ell_{1,1}}\oplus K_4^{\ell_{1,2}} \oplus K_3^{\ell_{1,3}}) \\
   &\quad \oplus (K_3^{\ell_{2,2}}\oplus K_2^{\ell_{2,3}} \oplus K_1^{\ell_{2,4}} \oplus K_2^{\ell_{1,1}\ell_{1,2}}\oplus K_1^{(\ell_{1,2})^2} \oplus K_0^{\ell_{1,3}\ell_{1,2}}
   \oplus K_1^{\ell_{1,1}\ell_{1,3}} \oplus K_0^{\ell_{1,2}\ell_{1,3}}) \\
   &\quad \oplus (K_1^{\ell_{3,3}} \oplus K_0^{\ell_{3,4}} \oplus K_0^{\ell_{1,1}\ell_{2,3}} \oplus K_0^{\ell_{2,2}\ell_{1,2}}).
\end{align*}

Using the labels in \eqref{ellks} and \eqref{alphaks}, a portion of the differential map $\partial^F_k$ given in \Cref{res k over R class T} is
\begin{equation*}  
 \xymatrixrowsep{3pc}
 \xymatrixcolsep{5pc}
 \xymatrix{
  K_{i+1}^{\deg_3\bm}\ar@{->}[r]^{(\partial_{i+1}^K)^{\deg_3\bm} }\ar@{}[d]_{\bigoplus}
 &K_i^{\deg_3\bm}. \\ 
(K_{i-1}^{\ell_{j,j}})   ^{\deg_3\bn}\ar@{->}[ru]  |{(\alpha_{j,j})   ^{\deg_3\bn}}\ar@{}[d]_{\bigoplus}&\\
(K_{i-2}^{\ell_{j,j+1}}) ^{\deg_3\bn}\ar@{->}[ruu] |{(\alpha_{j,j+1}) ^{\deg_3\bn}}\ar@{}[d]_{\bigoplus}&\\
(K_{i-3}^{\ell_{j,j+2}}) ^{\deg_3\bn}\ar@{->}[ruuu]|{(\alpha_{j,j+2}) ^{\deg_3\bn}}&
}
\end{equation*}

Set $\partial\coloneqq\partial^K$ for the differential maps of the Koszul complex. The differential maps of $F$, using the above ordering, are given by blocks of maps: 
\begin{align*}
    \partial_1^F& = \partial_1 \\[0.05in]
    \partial_2^F& =\begin{pmatrix} \partial_2&\alpha_{1,1} 
                   \end{pmatrix} \\[0.05in]
    \partial_3^F& = 
    \begin{pmatrix}
    \partial_3&\alpha_{1,1}&\alpha_{1,2}\\
            0&\partial_1&0
    \end{pmatrix} \\[0.05in]
     \partial_4^F& = 
    \begin{pmatrix}
    \partial_4&\alpha_{1,1}&\alpha_{1,2}&\alpha_{1,3}&0\\
            0&\partial_2^{\ell_{1,1}}&0&0&\alpha_{2,2}\\
            0&0&\partial_1^{\ell_{1,2}}&0&0
    \end{pmatrix} \\[0.05in]
     \partial_5^F& =  \begin{pmatrix}
     \partial_5&\alpha_{1,1}&\alpha_{1,2}&\alpha_{1,3}&0&0&0\\
            0&\partial_3^{\ell_{1,1}}&0&0&\alpha_{2,2}&\alpha_{2,3}&0\\
            0&0&\partial_2^{\ell_{1,2}}&0&0&0&\alpha_{1,1}^{\ell_{1,2}}\\
            0&0&0&\partial_1^{\ell_{1,3}}&0&0&0\\
             0&0&0&0&\partial_1^{\ell_{2,2}}&0&0\\
    \end{pmatrix} \\[0.05in]
\partial_6^F&=  
\left(
\arraycolsep=2pt\def\arraystretch{1.5}
\begin{array}{ccccccccccc}
\partial_6&\alpha_{1,1}&\alpha_{1,2}&\alpha_{1,3}&0&0&0&0&0&0&0\\
0&\partial_4^{\ell_{1,1}}&0&0&\alpha_{2,2}&\alpha_{2,3}&\alpha_{2,4}&0&0&0&0\\  
0&0&\partial_3^{\ell_{1,2}}&0&0&0&0&\alpha_{1,1}^{\ell_{1,2}}&\alpha_{1,2}^{\ell_{1,2}}&0&0 \\0&0&0&\partial_3^{\ell_{1,3}}&0&0&0&0&0&\alpha_{1,1}^{\ell_{1,3}}&0\\
0&0&0&0&\partial_2^{\ell_{2,2}}&0&0&0&0&0&\alpha_{3,3}\\
0&0&0&0&0&\partial_1^{\ell_{2,3}}&0&0&0&0&0 \\
0&0&0&0&0&0&0&\partial_1^{\ell_{1,1}\ell_{1,2}}&0&0&0
\end{array} \right) \\[0.05in] 
\partial_7^F &=\left(
\arraycolsep=2pt\def\arraystretch{1.5}
\begin{array}{cccccccccccccccc}
\partial_7&\alpha_{1,1}&\alpha_{1,2}&\alpha_{1,3}&0&0&0&0&0&0&0&0&0&0&0&0\\
0&\partial_5^{\ell_{1,1}}&0&0&\alpha_{2,2}&\alpha_{2,3}&\alpha_{2,4}&0&0&0&0&0&0&0&0&0\\
0&0&\partial_4^{\ell_{1,2}}&0&0&0&0&\alpha_{1,1}^{\ell_{1,2}}&\alpha_{1,2}^{\ell_{1,2}}&\alpha_{1,3}^{\ell_{1,3}}&0&0&0&0&0&0\\
0&0&0&\partial_3^{\ell_{1,3}}&0&0&0&0&0&0&\alpha_{1,1}^{\ell_{1,3}}&\alpha_{1,2}^{\ell_{1,3}}&0&0&0&0 \\
0&0&0&0&\partial_3^{\ell_{2,2}}&0&0&0&0&0&0&0&\alpha_{3,3}&\alpha_{3,4}&0&0\\
0&0&0&0&0&\partial_2^{\ell_{2,3}}&0&0&0&0&0&0&0&0&\alpha_{1,2}^{\ell_{23}}&0\\
0&0&0&0&0&0&\partial_1^{\ell_{2,4}}&0&0&0&0&0&0&0&0&0\\
0&0&0&0&0&0&0&\partial_2^{\ell_{1,1}\ell_{1,2}}&0&0&0&0&0&0&0&\alpha_{2,2}^{\ell_{1,2}}\\
0&0&0&0&0&0&0&0&\partial_1^{\ell_{1,2}^2}&0&0&0&0&0&0&0\\
0&0&0&0&0&0&0&0&0&0&\partial_1^{\ell_{1,1}\ell_{1,3}}&0&0&0&0&0\\
0&0&0&0&0&0&0&0&0&0&0&0&\partial_1^{\ell_{3,3}}&0&0&0\\
 \end{array}\right).
\end{align*}
\end{remark}

%%%%%%%%%%%%%%%%%%%%%%%%%%%%%%%
%%%%%%%%%%%%%%%%%%%%%%%%%%%%%%%
\section{A class $\clT$ example}
\label{sec:example}

In this section we discuss in detail an example of class $\clT$ that is a codepth 3 almost complete intersection ring. In fact, the almost complete intersections of codepth 3 of odd type are all of class $\clT$, see \cite[Theorem 2.3]{CVW}. Here we illustrate our results from \Cref{class T} with an example that has the smallest possible number of generators of the defining ideal (that is 4), with smallest possible type (that is 3). 

Let $\kk$ be a field, $Q=\kk[x,y,z]$ and $R \coloneqq Q/ (x^2,y^2,z^2,xyz)$. A minimal free resolution of $R$ as a $Q$-module has the form $0\to Q^3\to Q^6\to Q^4\to Q$, so we have 
\[a_0=1,\quad a_1=4,\quad a_2=6,\quad\text{and}\quad a_3=3.\]
By \Cref{cor: Poi} we have 
$\displaystyle \Poi{R}{\kk}(t)=\frac{(1+t)^3}{1-4t^2-3t^3-t^5-t^6}$. Therefore, a minimal free resolution of the $R$-module  $\kk$ has the form: 
\begin{equation}
\label{res ex}
F:\quad  \cdots \to R^{465} \xra{\partial^F_7} R^{200} \xra{\partial^F_6} R^{86} \xra{\partial^F_5} R^{37} \xra{\partial^F_4} R^{16} \xra{\partial^F_3} R^7 \xra{\partial^F_2} R^3 \xra{\partial^F_1} R^1. 
\end{equation}

We now describe how our results yield this resolution $F$ explicitly via Koszul blocks, and concretely express the differential maps $\partial^F_k$ for $1\les k \les 7$. By \Cref{rmk T} and \eqref{ellks}, the sequences of integers $\{b_k\}_{k\ges 0}$, $\{\ell_{k,r}\}_{k\ges 0}$ for $r\in\{k,k+1,k+2\}$ for this example are given by:
\begin{align*}
\{b_k\}_{k\ges 0}& =\{1,3,6,10,15,21,\dots\}\\ 
\{\ell_{k,k}\}_{k\ges 0}& =\{1,4,13,41,129,406,\dots\}\\ 
\{\ell_{k,k+1}\}_{k\ges 0}& =\{0,3,13,43,136,1347,\dots\}\\ 
\{\ell_{k,k+2}\}_{k\ges 0}& =\{0,3,12,39,123,387,1218\dots\}. 
\end{align*}

The Koszul complex $(K,\partial)$ is given by
\(K: 0 \to K_3 \xrightarrow{\partial_3} K_2 \xrightarrow{\partial_2} K_1 \xrightarrow{\partial_1} K_0 \to 0\). Thus, by \Cref{rmk:res} the components of the resolution $F$ up to degree 7, are given by:
\begin{align*}
F_0&\coloneqq K_0=R\\
F_1&\coloneqq K_1=R^3\\
F_2&\coloneqq K_2\oplus K_0^4=R^3\oplus R^4\cong R^7\\
F_3&\coloneqq K_3\oplus K_1^4 \oplus K_0^3=R\oplus (R^3)^4\oplus R^3\cong R^{16}\\
F_4&\coloneqq K_2^4\oplus K_1^{3}\oplus K_0^3\oplus K_0^{13}=(R^3)^4\oplus(R^3)^3\oplus R^3\oplus R^{13}\cong R^{37}\\
F_5&\coloneqq K_3^4\oplus K_2^{3}\oplus K_1^{3}\oplus K_1^{13}\oplus K_0^{13}\oplus K_0^{12}=R^4\oplus (R^3)^{3}\oplus (R^3)^{3}\oplus (R^3)^{13}\oplus R^{13}\oplus R^{12}\cong R^{86}\\
F_6&\coloneqq K_3^{3}\oplus K_2^{3}\oplus K_2^{13}\oplus K_1^{13}\oplus K_1^{12}\oplus K_0^{12}\oplus K_0^9\oplus K_0^{12}\oplus K_0^{41}\\
&=R^{3}\oplus (R^3)^{3}\oplus (R^3)^{13}\oplus (R^3)^{13}\oplus (R^3)^{12}
\oplus R^{12}\oplus R^9\oplus R^{12}\oplus R^{41}\cong R^{200}\\
F_7&= K_3^3 \oplus K_3^{13}\oplus K_2^{13}\oplus K_1^{12}\oplus K_2^{4\cdot 3}\oplus K_1^{3^2}\oplus K_0^{3\cdot 3}\oplus K_1^{4\cdot 3} \oplus K_0^{3\cdot 3}\oplus K_1^{41} \oplus K_0^{43} \oplus K_0^{4\cdot 13} \oplus K_0^{13\cdot 3}\\
&=R^3\oplus R^{13}\oplus (R^3)^{13}\oplus (R^3)^{12}\oplus (R^3)^{12} \oplus (R^3)^{9}\oplus R^9\oplus (R^3)^{12}\oplus R^9\oplus (R^3)^{41}\oplus R^{43}\oplus R^{52}\oplus R^{39} \cong R^{465}.
\end{align*}

By \Cref{rmk:res}, the differentials of the free resolution $(F,\partial^F)$ up to degree 7  involve the following maps: 
\begin{align*}
\partial_1,\quad \partial_2,\quad \partial_3,\quad \alpha_{1,1}, \quad \alpha_{1,2}, \quad \alpha_{1,3},\quad
\alpha_{2,2}, \quad \alpha_{2,3}, \quad \alpha_{2,4},\quad \alpha_{3,3}, \quad \text{and}\quad \alpha_{3,4}.
\end{align*}

Let $\{e_1,e_2,e_3\}$ be a basis for $K_1=R^3$, then $\{e_{12}, e_{13}, e_{23}\}$ is a basis for $K_2=R^3$, where $e_{ij} = e_1 \wedge e_j$, and $\{e_{123}\}$ is a basis for $K_3=R$, where $e_{123}=e_1 \wedge e_2 \wedge e_3$. The differential maps in the Koszul complex are:
\begin{align*}
    \partial_1(e_1)&=x   &\partial_2(e_{12})&=-ye_1+xe_2 & \partial_3(e_{123})&=ze_{12}-ye_{13}+xe_{23}. \\
    \partial_1(e_2)&=y & \partial_2(e_{13})&=-ze_1+xe_3&&  \\
    \partial_1(e_3)&=z & \partial_2(e_{23})&=-ze_2+ye_3 &&
\end{align*}
Set $A\coloneqq \HH{}{K}=\kk\oplus A_1\oplus A_2\oplus A_3$. Using \texttt{Macaulay2} \cite{M2}, the components $A_i$ have the following $\kk$-bases:
\begin{align*}
&A_0 \colon& [1], &&&&&&& \\[0.05in]
&A_1 \colon& [z^1_1]&=[xe_1], &[z^1_2]&=[ye_2], &[z^1_3]&=[ze_3], & [z^1_4]&=[yze_1], \\[0.05in]
&A_2 \colon& [z^1_1]\cdot[z^1_2]&=[xye_{12}], &  [z^1_2]\cdot[z^1_3]&=[yze_{23}], & [z^1_1]\cdot[z^1_3]&=[xze_{13}], & \text{and} &\\
&& [z^2_1]&=[yze_{12}], & [z^2_2]&=[xze_{12}], &
  [z^2_3]&=[yze_{13}], && \\[0.05in] 
&A_3 \colon& [z^3_1]&=[yze_{123}], & [z^3_2]&=[xze_{123}], & [z^3_3]&=[xye_{123}]. &&
\end{align*}

\Cref{rmk: ex beta} gives  the expressions for $\beta_i,\beta_i'$ for $0 \les i \les 3$ directly and by \eqref{def gamma} we have:
\begin{equation*}
\gamma_1=\begin{pmatrix}z^1_4\end{pmatrix}, \qquad \gamma_2=\begin{pmatrix}z^2_1&z^2_2&z^2_3\end{pmatrix}, \qquad\text{ and }\qquad
\gamma_3=\begin{pmatrix}z^3_1&z^3_2&z^3_3\end{pmatrix}.
\end{equation*}
By \Cref{def seq T}, and \eqref{alphaks}, we obtain the maps $\alpha_{k,r}$  for $1\les k\les 3$ and $r\in\{k,k+1,k+2\}$ as follows:
\begin{align*}
\alpha_{1,1}&=\left(\begin{array}{ccc|c}z^1_1&z^1_2&z^1_3&z^1_4\end{array}\right)\\[0.05in]
    \alpha_{1,2}&=\begin{pmatrix}z^2_1&z^2_2&z^2_3\end{pmatrix}\\[0.05in]
    \alpha_{1,3}&=\begin{pmatrix}z^3_1&z^3_2&z^3_3\end{pmatrix}\\[0.05in]
    \alpha_{2,2}&=
    \left(\begin{array}{cccccc:ccc|cccc}
    z^1_1 & z^1_2 & z^1_3 & 0 & 0 & 0 & 0 & 0 & 0 &z^1_4 & 0   & 0 & 0\\
    0 & z^1_1 & 0 & z^1_2 & z^1_3 & 0 & 0 & 0 & 0 &  0   &z^1_4& 0 & 0\\
    0 & 0 & z^1_1 & 0 & z^1_2 & z^1_3 & 0 & 0 & 0 & 0    & 0   &z^1_4& 0\\
    0 & 0 & 0     & 0 & 0     &   0   & z^1_1 & z^1_2 & z^1_3 & 0 & 0 & 0 &z^1_4
\end{array}\right)\\[0.05in]
\alpha_{2,3}&= 
\left(\begin{array}{c|ccc:ccc:ccc}
    z^1_2 \wedge z^1_3&z^2_1&z^2_2&z^2_3&0&0&0&0&0&0 \\ 
    z^1_1 \wedge z^1_3&0&0&0&z^2_1&z^2_2&z^2_3&0&0&0\\ 
    z^1_1 \wedge z^1_2&0&0&0&0&0&0&z^2_1&z^2_2&z^2_3
\end{array}\right)
\\[0.05in]
\alpha_{2,4}&=\left(\begin{array}{ccc:ccc:ccc:ccc}
z^3_1&z^3_2&z^3_3&0    &0    &0    &0    &0    &0    &0    &0    &0\\
0    &0    &0    &z^3_1&z^3_2&z^3_3&0    &0    &0    &0    &0    &0\\
0    &0    &0    &0    &0    &0    &z^3_1&z^3_2&z^3_3&0    &0    &0\\
0    &0    &0    &0    &0    &0    &0    &0    &0    &z^3_1&z^3_2&z^3_3
\end{array}\right)
\end{align*}

\begin{landscape} 
\begin{align*}
\alpha_{3,3}= & \fontsize{7.5}{7.5}\selectfont{
\left(
\arraycolsep=1.5pt\def\arraystretch{2}
\begin{array}{cccccccccc:cccccc:ccc:ccc:ccc:ccc|ccccccccccccc}
    z^1_1 & z^1_2 & z^1_3 & 0 & 0 & 0 & 0 & 0 & 0 & 0 & 0 & 0 & 0 & 0 & 0 & 0 & 0 & 0 & 0 & 0 & 0 & 0 & 0 & 0 & 0  &   0 & 0 & 0     &z^1_4 & 0 &  0 & 0 & 0 & 0 & 0 & 0 & 0 & 0 & 0 & 0 & 0\\
    0 & z^1_1 & 0 & z^1_2 & z^1_3 & 0 & 0 & 0 & 0 & 0 & 0 & 0 & 0 & 0 & 0 & 0 & 0 & 0 & 0 & 0 & 0 & 0 & 0 & 0 & 0 &    0 & 0 & 0     & 0  &z^1_4 &  0 & 0 & 0 & 0 & 0 & 0 & 0 & 0 & 0 & 0 & 0\\
    0& 0& z^1_1 & 0 & z^1_2 & z^1_3 & 0 & 0 & 0 & 0 & 0 & 0 & 0 & 0 & 0 & 0 & 0 & 0 & 0 & 0 & 0 & 0 & 0 & 0 & 0  &     0 & 0 & 0     & 0 &  0 &z^1_4  & 0 & 0 & 0 & 0 & 0 & 0 & 0 & 0 & 0 & 0\\
    0 & 0 & 0 & z^1_1 & 0 & 0 & z^1_2 & z^1_3 & 0 & 0 & 0 & 0 & 0 & 0 & 0 & 0 & 0 & 0 & 0 & 0 & 0 & 0 & 0 & 0 & 0 &    0 & 0 & 0     & 0 &  0 & 0  &z^1_4 & 0 & 0 & 0 & 0 & 0 & 0 & 0 & 0 & 0\\
    0 & 0 & 0 & 0 & z^1_1 & 0 & 0 & z^1_2 & z^1_3 & 0 & 0 & 0 & 0 & 0 & 0 & 0 & 0 & 0 & 0 & 0 & 0 & 0 & 0 & 0 & 0  &   0 & 0 & 0     & 0 &  0 & 0 & 0  &z^1_4  & 0 & 0 & 0 & 0 & 0 & 0 & 0 & 0\\
    0 & 0 & 0 & 0 & 0 & z^1_1 & 0 & 0 & z^1_2 & z^1_3 & 0 & 0 & 0 & 0 & 0 & 0 & 0 & 0 & 0 & 0 & 0 & 0 & 0 & 0 & 0 &    0 & 0 & 0     & 0 &  0 & 0 & 0 & 0 &z^1_4 & 0 & 0 & 0 & 0 & 0 & 0 & 0\\
    \hdashline
    0 & 0 & 0 & 0& 0& 0& 0& 0 &0& 0&z^1_1 & z^1_2 & z^1_3 & 0 & 0 & 0 & 0 & 0 & 0 & 0 & 0 & 0 & 0 & 0 & 0&             0 & 0 & 0   & 0 &  0 & 0 & 0 & 0 & 0&z^1_4 & 0 & 0 & 0 & 0 & 0 & 0 \\
    0 & 0 & 0 & 0& 0& 0& 0& 0 &0& 0&0 & z^1_1 & 0 & z^1_2 & z^1_3 & 0 & 0 & 0 & 0 & 0 & 0 & 0 & 0 & 0 & 0 & 0 & 0 & 0              & 0 &  0 & 0 & 0 & 0 & 0 & 0 &z^1_4  & 0 & 0 & 0 & 0 & 0\\
    0 & 0 & 0 & 0& 0& 0& 0& 0 &0& 0&0 & 0 & z^1_1 & 0 & z^1_2 & z^1_3 & 0 & 0 & 0 & 0 & 0 & 0 & 0 & 0 & 0 & 0 & 0 & 0              & 0 &  0 & 0 & 0 & 0 & 0 & 0 & 0  &z^1_4& 0 & 0 & 0 & 0\\
   \hdashline
    0 & 0 & 0 & 0& 0& 0& 0& 0 &0&0   &0 & 0 & 0 & 0 & 0 & 0 &   z^1_1 &z^1_2 & z^1_3 &0 & 0 & 0 & 0 & 0 & 0 & 0 & 0 & 0               & 0 &  0 & 0 & 0 & 0 & 0 & 0 & 0 & 0  &z^1_4 & 0 & 0 & 0\\
    0 & 0 & 0 & 0& 0& 0& 0& 0 &0&0   &0 & 0 & 0 & 0 & 0 & 0 &   0 & 0 & 0 & z^1_1 &z^1_2 & z^1_3 & 0 & 0 & 0 & 0 & 0 & 0               & 0 &  0 & 0 & 0 & 0 & 0 & 0 & 0 & 0 & 0  &z^1_4& 0 & 0\\
    0 & 0 & 0 & 0& 0& 0& 0& 0 &0&0   &0 & 0 & 0 & 0 & 0 & 0 &   0 & 0 & 0  & 0 & 0 & 0& z^1_1 &z^1_2 & z^1_3 & 0 & 0 & 0              & 0 &  0 & 0 & 0 & 0 & 0 & 0 & 0 & 0 & 0 & 0 &z^1_4 & 0\\
    0 & 0 & 0 & 0& 0& 0& 0& 0 &0&0   &0 & 0 & 0 & 0 & 0 & 0 &   0 & 0 & 0 & 0 & 0 & 0& 0 & 0 & 0 & z^1_1 &z^1_2 & z^1_3               & 0 &  0 & 0 & 0 & 0 & 0 & 0 & 0 & 0 & 0 & 0 & 0 &z^1_4   
\end{array} \right)
}\\[0.05in]
\alpha_{3,4} = &\fontsize{7.5}{7.5}\selectfont{
\left(
\arraycolsep=1.5pt\def\arraystretch{2.2}
\begin{array}{ccc:c|ccc:ccc:ccc:ccc:ccc:ccc:ccc:ccc:ccc:ccc:ccc:ccc:ccc}
    0 & 0 & 0 & 0 & z^2_1 & z^2_2 & z^2_3 & 0 & 0 & 0 & 0 & 0 & 0 & 0 & 0 & 0 & 0 & 0 & 0 & 0 & 0 & 0 & 0 & 0 & 0 & 0 & 0 & 0 & 0 & 0 & 0 & 0 & 0 & 0 & 0 & 0 & 0 & 0 & 0 & 0 & 0 & 0 & 0\\
    0 & 0 & 0 & 0 & 0 & 0 & 0 & z^2_1 & z^2_2 & z^2_3 & 0 & 0 & 0 & 0 & 0 & 0 & 0 & 0 & 0 & 0 & 0 & 0 & 0 & 0 & 0 & 0 & 0 & 0 & 0 & 0 & 0 & 0 & 0 & 0 & 0 & 0 & 0 & 0 & 0 & 0 & 0 & 0 & 0\\
    0 & 0 & 0 & 0 & 0 & 0 & 0 & 0 & 0 & 0 & z^2_1 & z^2_2 & z^2_3 & 0 & 0 & 0 & 0 & 0 & 0 & 0 & 0 & 0 & 0 & 0 & 0 & 0 & 0 & 0 & 0 & 0 & 0 & 0 & 0 & 0 & 0 & 0 & 0 & 0 & 0 & 0 & 0 & 0 & 0\\ 
    0 & 0 & 0 & 0 & 0 & 0 & 0 & 0 & 0 & 0 & 0 & 0 & 0 & z^2_1 & z^2_2 & z^2_3 & 0 & 0 & 0 & 0 & 0 & 0 & 0 & 0 & 0 & 0 & 0 & 0 & 0 & 0 & 0 & 0 & 0 & 0 & 0 & 0 & 0 & 0 & 0 & 0 & 0 & 0 & 0\\ \hdashline 
    z^1_2 \wedge z^1_3 & 0 & 0 & 0 & 0 & 0 & 0 & 0 & 0 & 0 & 0 & 0 & 0 & 0 & 0 & 0 & z^2_1 & z^2_2 & z^2_3 & 0 & 0 & 0 & 0 & 0 & 0 & 0 & 0 & 0 & 0 & 0 & 0 & 0 & 0 & 0 & 0 & 0 & 0 & 0 & 0 & 0 & 0 & 0 & 0\\
    z^1_1 \wedge z^1_3 & z^1_2 \wedge z^1_3 & 0 & 0 & 0 & 0 & 0 & 0 & 0 & 0 & 0 & 0 & 0 & 0 & 0 & 0 & 0 & 0 & 0 & z^2_1 & z^2_2 & z^2_3 & 0 & 0 & 0 & 0 & 0 & 0 & 0 & 0 & 0 & 0 & 0 & 0 & 0 & 0 & 0 & 0 & 0 & 0 & 0 & 0 & 0\\
    z^1_1 \wedge z^1_2 & 0 & z^1_2 \wedge z^1_3 & 0 & 0 & 0 & 0 & 0 & 0 & 0 & 0 & 0 & 0 & 0 & 0 & 0 & 0 & 0 & 0 & 0 & 0 & 0 & z^2_1 & z^2_2 & z^2_3 & 0 & 0 & 0 & 0 & 0 & 0 & 0 & 0 & 0 & 0 & 0 & 0 & 0 & 0 & 0 & 0 & 0 & 0\\
    0 & z^1_1 \wedge z^1_3 & 0 & 0 & 0 & 0 & 0 & 0 & 0 & 0 & 0 & 0 & 0 & 0 & 0 & 0 & 0 & 0 & 0 & 0 & 0 & 0 & 0 & 0 & 0 & z^2_1 & z^2_2 & z^2_3 & 0 & 0 & 0 & 0 & 0 & 0 & 0 & 0 & 0 & 0 & 0 & 0 & 0 & 0 & 0\\
    0 & z^1_1 \wedge z^1_2 & z^1_1 \wedge z^1_3 & 0 & 0 & 0 & 0 & 0 & 0 & 0 & 0 & 0 & 0 & 0 & 0 & 0 & 0 & 0 & 0 & 0 & 0 & 0 & 0 & 0 & 0 & 0 & 0 & 0 & z^2_1 & z^2_2 & z^2_3 & 0 & 0 & 0 & 0 & 0 & 0 & 0 & 0 & 0 & 0 & 0 & 0\\
    0 & 0 & z^1_1 \wedge z^1_2 & 0 & 0 & 0 & 0 & 0 & 0 & 0 & 0 & 0 & 0 & 0 & 0 & 0 & 0 & 0 & 0 & 0 & 0 & 0 & 0 & 0 & 0 & 0 & 0 & 0 & 0 & 0 & 0 & z^2_1 & z^2_2 & z^2_3 & 0 & 0 & 0 & 0 & 0 & 0 & 0 & 0 & 0\\
    0 & 0 & 0 & z^1_2 \wedge z^1_3 & 0 & 0 & 0 & 0 & 0 & 0 & 0 & 0 & 0 & 0 & 0 & 0 & 0 & 0 & 0 & 0 & 0 & 0 & 0 & 0 & 0 & 0 & 0 & 0 & 0 & 0 & 0 & 0 & 0 & 0 & z^2_1 & z^2_2 & z^2_3 & 0 & 0 & 0 & 0 & 0 & 0\\
    0 & 0 & 0 & z^1_1 \wedge z^1_3 & 0 & 0 & 0 & 0 & 0 & 0 & 0 & 0 & 0 & 0 & 0 & 0 & 0 & 0 & 0 & 0 & 0 & 0 & 0 & 0 & 0 & 0 & 0 & 0 & 0 & 0 & 0 & 0 & 0 & 0 & 0 & 0 & 0 & z^2_1 & z^2_2 & z^2_3 & 0 & 0 & 0\\
    0 & 0 & 0 & z^1_1 \wedge z^1_2 & 0 & 0 & 0 & 0 & 0 & 0 & 0 & 0 & 0 & 0 & 0 & 0 & 0 & 0 & 0 & 0 & 0 & 0 & 0 & 0 & 0 & 0 & 0 & 0 & 0 & 0 & 0 & 0 & 0 & 0 & 0 & 0 & 0 & 0 & 0 & 0 & z^2_1 & z^2_2 & z^2_3\\
\end{array}\right) 
}
\end{align*}
\end{landscape}

In terms of Koszul blocks, the differential maps  $\{\partial^F_{k}\}_{1\les k\les 7}$ of \eqref{res ex} are given by:
\begin{align*}
    \partial_1^F& = \partial_1 \\[0.05in]
    \partial_2^F& =\begin{pmatrix} \partial_2&\alpha_{1,1} 
                   \end{pmatrix} \\[0.05in]
    \partial_3^F& = 
    \begin{pmatrix}
    \partial_3&\alpha_{1,1}&\alpha_{1,2}\\
            0&\partial_1&0
    \end{pmatrix} \\[0.05in]
     \partial_4^F& = 
    \begin{pmatrix}
    \alpha_{1,1}&\alpha_{1,2}&\alpha_{1,3}&0\\
            \partial_2^4&0&0&\alpha_{2,2}\\
            0&\partial_1^3&0&0
    \end{pmatrix} \\[0.05in]
     \partial_5^F& =  \begin{pmatrix}
            \partial_3^4&0&0&\alpha_{2,2}&\alpha_{2,3}&0\\
            0&\partial_2^3&0&0&0&\alpha_{1,1}^3\\
            0&0&\partial_1^3&0&0&0\\
             0&0&0&\partial_1^{13}&0&0\\
    \end{pmatrix} \\[0.05in]
\partial_6^F&=  
\left(
\arraycolsep=2pt\def\arraystretch{1.5}
\begin{array}{ccccccccc}
0&0&\alpha_{2,2}&\alpha_{2,3}&\alpha_{2,4}&0&0&0&0\\  
\partial_3^3&0&0&0&0&\alpha_{1,1}^3&\alpha_{1,2}^3&0&0 \\
0&\partial_3^3&0&0&0&0&0&\alpha_{1,1}^{3}&0\\
0&0&\partial_2^{13}&0&0&0&0&0&\alpha_{3,3}\\
0&0&0&\partial_1^{13}&0&0&0&0&0 \\
0&0&0&0&0&\partial_1^{12}&0&0&0
\end{array} \right) \\[0.05in] 
\partial_7^F &=
\left(
\arraycolsep=1.5pt\def\arraystretch{1.5}
\begin{array}{ccccccccccccc}
0&0&0&0&\alpha_{1,1}^{3}&\alpha_{1,2}^{3}&\alpha_{1,3}^{3}&0&0&0&0&0&0\\
\partial_3^3&0&0&0&0&0&0&\alpha_{1,1}^{3}&\alpha_{1,2}^{3}&0&0&0&0 \\
0&\partial_3^{13}&0&0&0&0&0&0&0&\alpha_{3,3}&\alpha_{3,4}&0&0\\
0&0&\partial_2^{13}&0&0&0&0&0&0&0&0&\alpha_{1,2}^{13}&0\\
0&0&0&\partial_1^{12}&0&0&0&0&0&0&0&0&0\\
0&0&0&0&\partial_2^{12}&0&0&0&0&0&0&0&\alpha_{2,2}^{3}\\
0&0&0&0&0&\partial_1^{9}&0&0&0&0&0&0&0\\
0&0&0&0&0&0&0&\partial_1^{12}&0&0&0&0&0\\
0&0&0&0&0&0&0&0&0&\partial_1^{41}&0&0&0\\
 \end{array}\right).
\end{align*}
Finally, we write the differentials of \eqref{res ex} as matrices in terms of standard $R$-bases, using the following:
\begin{equation*}
\partial_1=
\begin{pmatrix}
x&y&z    
\end{pmatrix}, \qquad 
\partial_2=
\begin{pmatrix}
-y&-z& 0\\
 x& 0& -z\\
 0& x&  y
\end{pmatrix}, \qquad
\partial_3=
\begin{pmatrix}
z\\
-y\\
x
\end{pmatrix}
\end{equation*}
and 
\begin{align*}
z^1_1&=\begin{pmatrix}x\\0\\0\end{pmatrix}, &z^1_2&=\begin{pmatrix}0\\y\\0\end{pmatrix}, &z^1_3&=\begin{pmatrix}0\\0\\z\end{pmatrix}, & z^1_4&= \begin{pmatrix}yz\\0\\0\end{pmatrix}, \\[0.05in]
z^1_1\wedge z^1_2&=\begin{pmatrix}xy\\0\\0\end{pmatrix},&  z^1_2\wedge z^1_3&=\begin{pmatrix}0\\0\\yz\end{pmatrix}, & z^1_1\wedge z^1_3&=\begin{pmatrix}0\\xz\\0\end{pmatrix}, & z^2_1&=\begin{pmatrix}yz\\0\\0\end{pmatrix}, & z^2_2&=\begin{pmatrix}xz\\0\\0\end{pmatrix}, &
  z^2_3&=\begin{pmatrix}0\\yz\\0\end{pmatrix}, && \\[0.05in] 
  z^3_1&=(yz), & z^3_2&=(xz), & z^3_3&=(xy). &&
\end{align*}

\subsection*{Acknowledgment}
We thank the referee for a careful reading of the paper and for their suggestions that improved the exposition of the paper. We thank Sasha Pevzner for suggesting  some arguments in the proof of \Cref{cor: Poi}. Nguyen was partially supported by the NSF grant DMS-2201146. 

\subsection*{Disclaimer} The views expressed in this article are those of the author(s) and do not reflect the official policy or position of the U.S. Naval Academy, Department of the Navy, the Department of Defense, or the U.S. Government.

%%%%%%%%%%%%%%%%%%%%%%%%%%%%%%%%%


\begin{thebibliography}{9}
\bibitem{Av74} L.~L.~Avramov, {\it On the Hopf algebra of a local ring}, Izv. Akad. Nauk SSSR Ser. Mat., \textbf{38} (1974), pp.~253--277.

\bibitem{Av2} L.~L.~Avramov, {\it A cohomological study of local rings of embedding codepth $3$}, J.~Pure Appl.~Algebra, \textbf{216} (2012), no.~11, pp.~2489--2506.

\bibitem{AKM} L.~L.~Avramov, A.~R.~Kustin, and M.~Miller, {\it Poincar\'{e} series of modules over local rings of small embedding codepth or small linking number}, J.~Algebra, {\bf 118} (1988), no.~1, pp.~162--204.

\bibitem{BCLP} B.~Briggs, J.~C.~Cameron, J.~C.~Letz, and J.~Pollitz, {\it Koszul homomorphisms and universal resolutions in local algebra}, Forum of Mathematics, Sigma., \textbf{13} (2025), e63, \url{doi:10.1017/fms.2025.21}.

\bibitem{CVW} L. W. Christensen, O. Veliche, and J. Weyman. {\it Three takes on almost complete intersection ideals of grade 3}, In Peeva, I. (eds) Commutative Algebra, pp.~219–281, Springer, Cham., 2021, \url{https://doi.org/10.1007/978-3-030-89694-2_7}.

\bibitem{BH} W.~Bruns and J.~ Herzog, {\it Cohen-{M}acaulay rings}, {Cambridge University Press}, vol.~{\bf 39}, {Cambridge studies in advanced mathematics}, 1993.

\bibitem{G} E.~S.~Golod, {\it On the homology of some local rings}, Dokl. Akad. Nauk SSSR, \textbf{144} (1962), pp.~479--482 (in Russian); English translation: Soviet Math. Dokl. \textbf{3} (1962), pp.~745--748.

\bibitem{M2} D.~R.~Grayson and M.~Stillman, {\it Macaulay2, a software system for research in algebraic geometry}, Available at \url{http://www.math.uiuc.edu/Macaulay2/}.

\bibitem{NV} V.~C.~Nguyen and O.~Veliche, {\it A truncated minimal free resolution of the residue field}, In: Miller, C., Striuli, J., Witt, E.E. (eds) Women in Commutative Algebra, {\it Proceedings of the 2019 WICA Workshop}, Association for Women in Mathematics Series, vol.~{\bf 29}, pp.~399--437, Springer, 2022. 

\bibitem{NV2} V.~C.~Nguyen and O.~Veliche, {\it Iterated mapping cones on the Koszul complex and their application to complete intersection rings}, Journal of Algebra and Its Applications, {\bf 24} (2025), No.~10, 2550234, \url{https://doi.org/10.1142/S0219498825502342}.

\bibitem{T} J.~Tate, {\it Homology of Noetherian rings and local rings}, Illinois J.~Math., {\bf 1} (1957), pp.~14--27.

\bibitem{W} J.~Weyman, {\it On the structure of free resolutions of length 3}, J.~Algebra {\bf 126} (1989), no.~1, pp.~1--33.

\end{thebibliography}
\end{document}